\numberwithin{equation}{section}
\newtheorem{thm}{Theorem}[section]
\newtheorem{theorem}[thm]{Theorem}
\newtheorem{lemma}[thm]{Lemma}
\newtheorem{lem}[thm]{Lemma}
\newtheorem{prop}[thm]{Proposition}
\theoremstyle{definition}
\newtheorem{definition}[thm]{Definition}
\newtheorem{example}[thm]{Example}
\newtheorem{assumption}[thm]{Assumption}
\theoremstyle{remark}
\newtheorem{remark}[thm]{\bf{Remark}}
\newcommand\bD{\mathbb{D}}
\newcommand\bE{\mathbb{E}}
\newcommand\bH{\mathbb{H}}
\newcommand\bL{\mathbb{L}}
\newcommand\bN{\mathbb{N}}
\newcommand\bP{\mathbb{P}}
\newcommand\bR{\mathbb{R}}
\newcommand\bS{\mathbb{S}}
\newcommand\bZ{\mathbb{Z}}
\newcommand\cB{\mathcal{B}}
\newcommand\cD{\mathcal{D}}
\newcommand\cF{\mathcal{F}}
\newcommand\cH{\mathcal{H}}
\newcommand\cP{\mathcal{P}}
\newcommand\cS{\mathcal{S}}
\newcommand\frH{\mathfrak{H}}
\begin{document}

\title[SPDE\MakeLowercase{s} with nonlocal operators in open sets]{The Dirichlet problem for stochastic partial differential equations with nonlocal operators in $C^{1,\sigma}$ open sets}

\author{Kyeong-Hun Kim}
\address{Department of Mathematics, Korea University, 145 Anam-ro, Seongbuk-gu, Seoul,
02841, Republic of Korea}
\email{kyeonghun@korea.ac.kr}
\thanks{}

\author{Junhee Ryu}
\address{School of Mathematics, Korea Institute for Advanced Study, 85 Hoegi-ro, Dongdaemun-gu, Seoul, 02455, Republic of Korea} \email{junhryu@kias.re.kr}
\thanks{K.-H. Kim was supported by the National Research Foundation of Korea(NRF) grant funded by the Korea government(MSIT) (No. RS-2025-00556160). J. Ryu was supported by a KIAS Individual Grant (MG101501) at Korea Institute for Advanced Study.}

\subjclass[2020]{60H15, 35R60, 35B65, 45K05}

\keywords{Stochastic partial differential equations, Dirichlet problem, nonlocal equations, Gaussian noise, maximal $L_p$-regularity}

\begin{abstract}
This paper provides a comprehensive  Sobolev regularity theory for the Dirichlet problem of stochastic partial differential equations in $C^{1,\sigma}$ open sets. We consider substantially large classes of nonlocal operators and generalized Gaussian noise.  Our main results include the existence and uniqueness of strong solutions in weighted Sobolev spaces, along with maximal $L_p$-regularity estimates for the solutions.
\end{abstract}

\maketitle


\section{Introduction}
This paper is devoted to the study of Sobolev regularity theory for stochastic partial differential equations (SPDEs) with nonlocal operators in $C^{1,\sigma}$ open sets. More precisely, we consider the Dirichlet problem of the prototype
 \begin{equation} \label{eq_intro}
\begin{cases}
du=(L_tu + f(u))dt +  h(u)\dot{W}, \quad &(t,x)\in(0,\tau)\times D,
\\
u(0,x)=u_0,\quad & x\in D,
\\
u(t,x)=0,\quad &(t,x)\in[0,\tau]\times D^c,
\end{cases}
\end{equation}
where
\begin{itemize}
    \item $D$ is a bounded $C^{1,\sigma}$ open set with $\sigma\in(0,1)$,

    \item $L_t$ is a time-inhomogeneous random symmetric nonlocal operator of order $\alpha\in(0,2)$,

    \item $W$ is a generalized centered Gaussian noise that is white in time and spatially homogeneous,

    \item the functions $f(u)$ and $h(u)$ are either semi-linear or  super-linear; for instance one may take $h(u)=\xi |u|^{1+\lambda}$, where $\lambda\geq 0$ and $\xi$ is a real-valued function.
\end{itemize}

We first discuss the nonlocal operator $L_t$, which is defined by
\begin{align} \label{oper}
L_t u(x) = \frac{1}{2}\int_{\bR^d} \left( u(x+y)+u(x-y)-2u(x) \right)\, \nu_t(\omega, dy), \quad t\in(0,\tau),
\end{align}
where $\nu_t$ is a nondegenerate $\alpha$-stable symmetric L\'evy measure for each $t\in(0,\infty)$ and $\omega\in \Omega$.
It is worth noting that the infinitesimal generator of any symmetric $\alpha$-stable L\'evy process is of the form \eqref{oper}, with $\nu_t$ independent of both $t$ and $\omega$.
The most well-known example is the fractional Laplacian $-(-\Delta)^{\alpha/2}$, which is the infinitesimal generator of a rotationally symmetric $\alpha$-stable L\'evy process. In this case, $\nu_t(dy)=c|y|^{-d-\alpha}dy$ for some $c>0$. We also introduce a singular example; the generator of $d$ independent one-dimensional symmetric stable L\'evy processes, which is defined as
\begin{equation*}
    L_tu(x):=\sum_{i=1}^d -(-\Delta)_{x_i}^{\alpha/2} = c(\alpha) \sum_{i=1}^d \int_{\bR} \frac{f(x+y_ie_i) + f(x-y_ie_i) - 2f(x)}{|y_i|^{1+\alpha}} dy_i,
\end{equation*}
where $e_i$ is the unit vector in the $i$-th coordinate. This operator is called singular because the spectral measure of the L\'evy measure (see \eqref{eq1022000}) is given by a sum of $2d$ Dirac measures defined on the unit sphere. We emphasize that in this paper, we cover not only deterministic cases but also those that are merely measurable in $(\omega,t)$.

The noise $W$ is a mean zero Gaussian random field that is white in time and spatially homogeneous. Formally, the covariance is given by
\begin{equation*}
    \bE(W(t,x)W(s,y))=\delta_0(t-s)\Pi(x-y),
\end{equation*}
where $\delta_0$ is the centered Dirac delta distribution and $\Pi$ is a nonnegative definite tempered measure on $\bR^d$. A rigorous formulation of the noise is given in Section \ref{sec_defnoise}. When $\Pi(x-y)=\delta_0(x-y)$, $W$ becomes a space-time white noise, which is extremely singular. In this case, real-valued solutions to the SPDEs can be obtained only when $d=1$ (see e.g. \cite{CS20}). In this paper, we consider not only this singular noise but also more regular noises to handle SPDEs in higher dimensions.

In the literature, there are numerous results on SPDEs in the whole space $\bR^d$, involving the operators and noise introduced above. 
We briefly review Sobolev regularity results for linear equations.
For equations with local operators and white noise, we refer the reader to \cite{K96, KAA}, where Krylov firstly introduced an analytic approach to obtain maximal regularity of solution. Since the work of \cite{K96, KAA}, the analytic approach has been extended to equations with various types of spatial operators, such as \eqref{oper}. See \cite{CL12, KK20, KKK13, MP12, MP21}.
In \cite{LV21, NVW12, PV19}, the method based on $H^{\infty}$-calculus was presented.
For equations driven by spatially homogeneous noise, related results can be found in \cite{CH21, CHP24, FS06}. We also refer the reader to \cite{CD23, D99, Dalang, DS80} for the basic theory of colored noise.

A number of results have also been established for super-linear equations
\begin{equation*}
    du=\Delta u dt + |u|^{1+\lambda}\dot{W}, \quad (t,x)\in (0,\tau)\times D; u(0,\cdot)=u_0.
\end{equation*}
By combining the results of \cite{M91, M00, S25}, when $W$ is a space-time white noise, one can conclude that real-valued solutions can be obtained only when $d=1$ and $0\leq\lambda\leq1/2$. See also \cite{H20, KAA} for Sobolev regularity results. In \cite{CH21, CHP24, H21}, nonlocal operators or general Gaussian noise were considered in the whole space $D=\bR^d$.

As far as we know, the existing results in the literature on Sobolev regularity theory for SPDEs involving nonlocal operators only address equations defined on the whole space.
In this paper, we focus on SPDEs driven by nonlocal operators and generalized Gaussian noise in open sets. Our aim is to obtain maximal regularity of solutions in the weighted Sobolev spaces $\bH_{p,\theta}^\gamma(D,\tau):=L_p(\Omega\times(0,\tau);H_{p,\theta}^\gamma(D))$, where $p\geq2$, $\theta,\gamma\in \bR$, and $\tau$ is a stopping time.
For a special case when $\gamma=0,1,2,\dots$,
\begin{align*}
\|u\|_{H_{p,\theta}^{\gamma}(D)} := \sum_{m\leq \gamma} \left( \int_D |d_x^{m} D_x^m u|^p d_x^{\theta-d} dx \right)^{1/p},
\end{align*}
where $d_x=dist(x,\partial D)$. To the best of our knowledge, the spaces $H_{p,\theta}^{\gamma}(D)$ were firstly introduced in \cite[ Section 2.6.3]{ML68} for the specific case $p = 2$ and $\theta = d$, and they were generalized in a unified manner for $p \in (1,\infty)$ and $\theta ,\gamma \in \bR$ in \cite{K99} in order to establish an $L_p$-theory of SPDEs.

A common obstacle in the study of both deterministic nonlocal equations and local SPDEs in domains is that the highest order derivatives of solutions blow up near the boundary. Even in the simple elliptic model $-(-\Delta)^{\alpha/2}u=1$ with zero exterior condition, for $\beta\in(\alpha/2,\alpha)$,
\begin{equation*}
    \lim_{d_x\to0} \frac{(-\Delta)^{\beta/2}u}{d_x^{\alpha/2-\beta}} \, \text{ exists.}
\end{equation*}
See \cite{Dyda12} for the precise asymptotics.
To control the boundary behavior of solution, weighted Sobolev spaces were introduced in \cite{CKR23, DR24}. See also \cite{R16, RS14} for weighted H\"older theory, and \cite{AG23, G14, Gfrac} for $L_p$-maximal regularity results in the $\mu$-transmission space. For local SPDEs, Krylov's weighted theory plays an analogous role; see \cite{K04, K94, KL99}.
The equations considered in this paper show similar boundary singularities, so we employ weighted Sobolev spaces to handle the boundary behavior of solutions.

Now we introduce the main contributions of this paper.

\vspace{1mm}
$(1)$ Theorem \ref{thm_white}: Semi-linear SPDEs driven by infinite dimensional Wiener noise.

Under Lipschitz conditions on $f$ and $g$, we prove existence, uniqueness, and maximal weighted $L_p$ estimates for 
        \begin{equation*}
du=\left(L_tu+f(u)\right)dt + \sum_{k=1}^\infty g^k(u)dw^k_t,\quad (t,x)\in(0,\tau)\times D,
\end{equation*}
with $u(0,\cdot)=u_0$ and zero exterior condition. Here, $w_t^k$ a sequence of  independent one-dimensional Wiener processes. In particular, we show that for any $p\geq2$ and $\gamma\in[0,\alpha]$,
\begin{align*}
    &\|\psi^{-\alpha/2}u\|_{\bH_{p,\theta}^{\gamma}(D,\tau)}
    \\
    &\leq N \left(\|u_0\|_{U_{p,\theta}^{\gamma}(D)} + \|\psi^{\alpha/2} f(0)\|_{\bH_{p,\theta}^{\gamma-\alpha}(D,\tau)} + \|g(0)\|_{\bH_{p,\theta}^{\gamma-\alpha/2}(D,\tau,l_2)}\right),
\end{align*}
where $U_{p,\theta}^{\gamma}(D)$ is the weighted Besov space, which is introduced to handle the initial data (see Section \ref{sec_func}). When $D$ is convex, then the range of $\theta\in(d-1,d-1+p)$ is sharp. However, for general open sets, the range is restricted due to a technical nature (cf. Remark \ref{rem4061023}). To the best of our knowledge, Theorem \ref{thm_white} provides the first maximal regularity results for nonlocal SPDEs in domains, even in the linear case.

To prove the above estimate, we exploit an analytic approach based on Krylov's kernel-free approach (see \cite{KL99}).
We first obtain higher-order regularity of solutions by using an estimate of the commutator term. Then we prove zeroth-order estimates by using It\^o's formula together with a version of nonlocal integration by parts presented in \cite{DR24}.

\vspace{1mm}
$(2)$ Theorem \ref{mainthm}: SPDEs driven by spatially homogeneous noise.

We consider a class of generalized Gaussian noise including a space-time white noise. We show that under the reinforced Dalang's condition on the noise (see Assumption \ref{ass_D}), \eqref{eq_intro} admits a unique real-valued solution even in higher dimensions. In this result, Lipschitz conditions on $f$ and $h$ are imposed. Moreover, we obtain regularity of solutions, whose upper bound is determined by the noise's spatial covariance.

The key idea in the proof is to utilize the fact that the noise can be represented as an infinite series of Wiener processes. Based on this representation, we analyze the negative regularity of the stochastic term.

\vspace{1mm}
$(3)$ Theorem \ref{thm_super}: SPDEs with super-linear multiplicative noise term.

We study \eqref{eq_intro} in the case when $h(u)=\xi |u|^{1+\lambda}$. Our results include solvability and a characterization of the admissible range of the regularity parameter $\gamma$ and the exponent $\lambda$ depending on the noise.

Our strategy is to approximate the equation by a sequence of semi-linear equations. More precisely, we consider the truncated nonlinear term $h_m(u)=|0\vee u\wedge m|^{1+\lambda}$ and the corresponding solution $u_m$. By showing that each $u_m$ does not explode in finite time, we can conclude that the limit of $u_m$ is the solution to the desired super-linear equation.

\vspace{1mm}
$(4)$ Theorem \ref{thm_max}: Maximum principle.

We establish a version of the maximum principle, which is a fundamental tool for the analysis of SPDEs.
We prove this by using It\^o's formula and integration by parts for nonlocal operators.

Now, we introduce the organization of this paper. In Section \ref{sec_2}, we
introduce the function spaces and provide precise definitions of the operators and noise. We state our main results in Section \ref{sec_3}. In Sections \ref{proof semilinear}-\ref{proofsuper}, we provide the proofs of our main results. The proof of the maximum principle is placed in Section \ref{appA}. Finally, in Section \ref{appB}, we prove certain properties of function spaces in the whole space to derive those in open sets.

We introduce notations used in this paper.
We use $``:="$ or $``=:"$ to denote a definition.  $\bN$ and $\bZ$ denote the natural number system and the integer number system, respectively. We denote $\bN_+:=\bN\cup\{0\}$, and  as usual $\bR^d$ stands for the Euclidean space of points $x=(x^1,\dots,x^d)$,
$$
B_r(x)=\{y\in\bR^d : |x-y|<r\}, \quad \bR^{d}_+=\{(x^1,\dots,x^d)\in\bR : x^1>0\}.
$$
In particular, $\bR:=\bR^1$. For $a,b\in \bR$, $a\wedge b:=\min\{a,b\}$ and $a\vee b:=\max\{a,b\}$.
By $\cF_d$ and $\cF^{-1}_d$, we denote the $d$-dimensional Fourier transform and the inverse Fourier transform respectively, i.e.,
$$
\cF_d[f](\xi):=\int_{\bR^d} e^{-i\xi\cdot x} f(x) dx, \quad \cF_d^{-1}[f](\xi):=\frac{1}{(2\pi)^d}\int_{\bR^d} e^{i\xi\cdot x} f(\xi) d\xi.
$$
For nonnegative functions $f$ and $g$, we write $f(x)\approx g(x)$  if there exists a constant $N>0$, independent of $x$,  such that $N^{-1} f(x)\leq g(x) \leq N f(x)$. For functions $u(x)$, we write
$$
 D_iu(x):=\frac{\partial u}{\partial x^i}.
$$
We also use  $D^n_xu$ to denote the  partial derivatives of order $n\in\bN_+$ with respect to the space variables. For an open set $U\subset \bR^d$, $C_b(U)$ denotes the space of continuous functions $u$ in $U$ such that $|u|_{C_b(U)}:=\sup_U |u(x)|<\infty$.  By $C^2_b(U)$ we denote the space of functions  whose derivatives of order up  to $2$ are in $C(U)$.   For an open set $V\subset \bR^m$, where $m\in \bN$,   by $C_c^\infty(V)$ we denote the space of infinitely differentiable functions with compact support in $V$. For  a Banach space $F$ and $\delta\in (0,1]$,   $C^{\delta}(V;F)$ denotes the space of $F$-valued continuous functions $u$ on $V$  such that
\begin{eqnarray*}
    |u|_{C^{\delta}(V;F)}&:=&|u|_{C(V;F)}+[u]_{C^{\delta}(V;F)}
    \\
    &:=&
     \sup_{x\in V}|u(x)|_F+\sup_{x,y\in V}\frac{|u(x)-u(y)|_F}{|x-y|^{\delta}}<\infty.
\end{eqnarray*}
Also, for  $p>1$ and a measure $\mu$ on $V$, $L_p(V, \mu; F)$  denotes the set of $F$-valued Lebesgue measurable functions $u$ such that 
$$
\|u\|_{L_p(V, \mu; F)}:=\left(\int_V |u|^p_F \,d\mu\right)^{1/p}<\infty.
$$
 We drop $F$ and $\mu$ if $F=\bR$ and $\mu$ is the Lebesgue  measure. 
 By $\cD'(D)$, where $D$ is an open set in $\bR^d$, 
we denote the space of all distributions on $D$, and  for given $f\in \cD'(D)$, the action of $f$ on $\phi \in C_c^\infty(D)$ is denoted by
\begin{equation} \label{eq3171811}
    (f, \phi)_{D} :=f(\phi).
\end{equation}
For $f\in\cD'(D)$, we say that $f\geq 0$ if
\begin{equation} \label{eq6292218}
  (f,\phi)_D  \geq 0
\end{equation}
for any $\phi\in C_c^\infty(D)$ such that $\phi\geq0$.
Next, $\cS(\bR^d)$ stands for the space of Schwartz functions on $\bR^d$. When $D=\bR^d$, we omit $D$ in \eqref{eq3171811}.
If we write $N=N(a,b,\cdots)$, then this means that the constant $N$ depends only on $a,b,\cdots$. For functions depending on $\omega,t$, and $x$, the argument $\omega\in\Omega$ is omitted.

\section{Preliminaries}  \label{sec_2}

\subsection{Function spaces} \label{sec_func}

In this subsection we introduce function spaces and some of their key properties that will be used throughout the paper.

Let $l_2$ be the set of all sequences $a=(a^1,a^2,\cdots)$ such that
$$
|a|_{l_2}:=\left(\sum_{k=1}^\infty |a^k|^2 \right)^{1/2}<\infty.
$$
We begin by recalling the definitions of Sobolev and Besov spaces on $\bR^d$. For $p\in(1,\infty)$ and $\gamma\in\bR$, the Sobolev space $H_p^\gamma=H_p^\gamma(\bR^d)$ is defined as the set of all tempered distributions $u$  on $\bR^d$ such that
$$
\|u\|_{H_p^\gamma}=\|(1-\Delta)^{\gamma/2}u\|_{L_p}= \left\| \cF^{-1}_d \left\{ (1+|\cdot|^2)^{\gamma/2}\cF_d(u)(\cdot) \right\} \right\|_{L_p} <\infty.
$$
It is well-known that if $\gamma\in\bN$, this space coincides with the standard Sobolev space:
$$
H_p^\gamma=W_p^\gamma:=\{u: D_x^\beta u \in L_p, |\beta|\leq \gamma\}.
$$
Furthermore, for $\gamma_1<\gamma_2$,
\begin{align} \label{inclu}
H_p^{\gamma_2}\subset H_p^{\gamma_1}.
\end{align}
We also consider  the $l_2$-valued Sobolev space $H_p^\gamma(l_2)=H_p^\gamma(\bR^d;l_2)$,  consisting of   all $l_2$-valued tempered distributions $v$ on $\bR^d$ for which
$$
\|v\|_{H_p^\gamma(l_2)}:=\| |(1-\Delta)^{\gamma/2}v|_{l_2} \|_{L_p}<\infty.
$$
 To define the Besov space, we choose a function $\Psi$ whose Fourier transform $\cF_d(\Psi)$ is infinitely differentiable, supported in an annulus $\{\xi\in\bR^d : \frac{1}{2} \leq |\xi| \leq 2\}$, $\cF_d(\Psi)\geq0$ and
$$
\sum_{j\in\bZ} \cF_d(\Psi)(2^{-j}\xi)=1, \quad \forall \xi\neq0.
$$
For a tempered distribution $u$ and $j\in \bZ$,  define
$$
\Delta_j u(x):= \cF_d^{-1}\left( \cF_d(\Psi)(2^{-j}\cdot) \cF_d(u) \right)(x), \quad S_0 u(x):=\sum_{j=-\infty}^0 \Delta_j u(x).
$$
Then for $p>1$ and $\gamma\in\bR$, the Besov space $B_{p,p}^\gamma=B_{p,p}^\gamma(\bR^d)$ consists of  all tempered distributions $u$ such that
$$
\|u\|_{B_{p,p}^\gamma}:=\|S_0u\|_{L_p} + \left( \sum_{j=1}^\infty 2^{j p \gamma} \|\Delta_j u\|_{L_p}^p \right)^{1/p} <\infty.
$$
It is known that for $p\geq2$, we have the continuous embedding $H_p^\gamma\subset B_{p,p}^\gamma$ (see e.g. \cite[Proposition 2.3.2]{H83}).

Next, we introduce weighted Sobolev and Besov spaces on $D\subset\bR^d$. Throughout this section, we always assume  $D$ is an open set with a nonempty boundary.

Let $d_x:=dist(x,\partial D)$, and define the weighted Lebesgue space $L_{p,\theta}(D):=L_{p}(D,d_x^{\theta-d}dx)$ for $\theta\in\bR$. Then for $\theta\in\bR$ and $n\in\bN_+$, we define the weighted Sobolev space
$$
H_{p,\theta}^n(D):=\{u\in \cD'(D) : u,d_x D_x u, \cdots, d_x^n D_x^n u \in L_{p,\theta}(D) \},
$$
equipped with the norm
\begin{align} \label{eq1032130}
\|u\|_{H_{p,\theta}^n(D)} := \sum_{m\leq n} \left( \int_D |d_x^{m} D_x^m u|^p d_x^{\theta-d} dx \right)^{1/p}.
\end{align}
To extend these spaces to arbitrary (possibly fractional) orders, we take a sequence of nonnegative functions $\{\zeta_n\}_{n\in\bN}$ in $C_c^\infty(D)$ satisfying the followings:
\begin{align}
    &(i)\,\,supp (\zeta_n) \subset \{x\in D : c_1e^{-n}< d_x<c_2e^{-n}\}, \quad c_2>c_1>0, \label{zeta1}
    \\
    &(ii)\,\,\sup_{x\in\bR^d}|D^m_x \zeta_n (x)| \leq N(m)e^{mn},\quad \forall m\in\bN_+, \label{zeta2}
    \\
    &(iii)\,\,\sum_{n\in\bZ} \zeta_n(x) > c>0,\quad\forall x\in D. \label{zeta3}
\end{align}
Such a sequence can be constructed by  mollifying indicator functions of the type $\{x\in D : c_3e^{-n}< d_x < c_4e^{-n}\}$.
If $\{x\in D : c_1e^{-n}< d_x <c_2e^{-n}\}=\emptyset$ for a partcular $n$, then we simply define $\zeta_n=0$.

We denote by $H_{p,\theta}^\gamma(D)$, $H_{p,\theta}^\gamma(D,l_2)$, and $B_{p,p;\theta}^\gamma(D)$ the spaces of distributions $u\in \cD'(D)$ for which
\begin{align} \label{def. Hptheta}
\|u\|^p_{H_{p,\theta}^\gamma(D)}:= \sum_{n\in\bZ} e^{n\theta} \| \zeta_{-n}(e^n\cdot) u(e^n\cdot)\|^p_{H_p^\gamma}<\infty,
\end{align}
$$
\|u\|^p_{H_{p,\theta}^\gamma(D,l_2)}:= \sum_{n\in\bZ} e^{n\theta} \| \zeta_{-n}(e^n\cdot) u(e^n\cdot)\|^p_{H_p^\gamma(l_2)}<\infty,
$$
and
\begin{align*}
\|u\|^p_{B_{p,p;\theta}^\gamma(D)}:= \sum_{n\in\bZ} e^{n\theta} \| \zeta_{-n}(e^n\cdot) u(e^n\cdot)\|^p_{B_{p,p}^\gamma}<\infty,
\end{align*}
respectively.  Note that a distribution on $D$ with compact support in $D$ can naturally be  regarded as a distribution on $\bR^d$. In particular, for any distribution $v$ on $D$, the action of $\zeta_n v$ on a test function $\phi\in C_c^\infty(\bR^d)$ is given by
$$
(\zeta_n v, \phi)_{\bR^d} = (v,\zeta_n \phi)_D, \quad \phi\in C_c^\infty(\bR^d).
$$
Conversely, for a distribution $u$ defined on $\bR^d$, if its restriction to $D$ belongs to $H_{p,\theta}^\gamma(D)$, we  still write $u\in H_{p,\theta}^\gamma(D)$. This also applies to the spaces $H_{p,\theta}^\gamma(D,l_2)$ and $B_{p,p;\theta}^\gamma(D)$.

It is known that (see e.g. \cite[Proposition 2.2]{L00}) the definitions of  $H_{p,\theta}^\gamma(D)$ and $B_{p,p;\theta}^\gamma(D)$ are independent of the particular choice of $\{\zeta_n\}$. More specifically, if $\{\xi_n\in C_c^\infty(D)\}$ satisfies \eqref{zeta1} and \eqref{zeta2}, then we have
$$
\sum_{n\in\bZ} e^{n\theta} \| \xi_{-n}(e^n\cdot) u(e^n \cdot)\|_{H_p^\gamma}^p \leq N \|u\|_{H_{p,\theta}^\gamma(D)}^p,
$$
and the reverse inequality also holds if $\{\xi_n\}$ additionally satisfies \eqref{zeta3}.  The same result applies to the Besov norm $\|u\|_{B_{p,p;\theta}^\gamma(D)}$.  Furthermore, when $\gamma=n\in\bN_+$, then norms \eqref{eq1032130} and \eqref{def. Hptheta} are equivalent (cf. \cite[Proposition 2.2]{L00}).
From the inclusion relation \eqref{inclu}, it follows that for any $p>1$ and $\theta\in\bR$,
\begin{align} \label{eq4250951}
H_{p,\theta}^{\gamma_2}\subset H_{p,\theta}^{\gamma_1} \text{ if } \gamma_1<\gamma_2.
\end{align}
If the open set $D$ is bounded, then there exists $n_0=n_0(D)\in \bZ$ such that $\zeta_{-n}=0$ for all $n\geq n_0$, and
\begin{equation} \label{eq5111835}
    H_{p,\theta}^\gamma \subset H_{p,\theta'}^\gamma \text{ if } \theta\leq \theta'.
\end{equation}

Next, take a smooth function $\psi$ in $D$ such that $\psi\approx d_x$ (i.e., $\psi$ behaves like the distance to the boundary) in $D$, and   for any $m\in\bN_+$ it satisfies
$$
\sup_{x\in D} |d_x^m D_x^{m+1}\psi(x)| \leq N(m)<\infty.
$$
An example of such a function is  $\psi:=\sum_{n\in\bZ} e^{-n} \zeta_n$, which fulfills the required properties.
For $\nu \in \bR$, we define 
$$\psi^{-\nu} H_{p,\theta}^{\gamma}(D):=\{u: \psi^\nu u \in H_{p,\theta}^\gamma(D)\}, \quad \psi^{-\nu} B_{p,p;\theta}^{\gamma}(D):=\{u: \psi^\nu u \in B_{p,p;\theta}^\gamma(D)\}.
$$

Below we present some properties of the function spaces introduced above.
We recall that $D$ is an open set with a nonempty boundary.

\begin{lem} \label{lem_prop}
Let $p\in(1,\infty)$, and $\gamma,\theta\in\bR$.

(i)  The spaces $H_{p,\theta}^\gamma(D)$ and $B_{p,p;\theta}^\gamma(D)$ are Banach spaces, and the set $C_c^\infty(D)$ is dense in both of these spaces.

(ii) 
For $\delta\in\bR$,  we have identities 
$$H_{p,\theta}^\gamma (D) = \psi^\delta H_{p,\theta+\delta p}^\gamma(D), \quad B_{p,p;\theta}^\gamma (D) = \psi^\delta B_{p,p;\theta+\delta p}^\gamma(D).
$$ Furthermore, the corresponding norms are  equivalent:
$$
\|u\|_{H_{p,\theta}^\gamma(D)} \approx \|\psi^{-\delta} u\|_{H_{p,\theta+\delta p}^\gamma(D)}, \quad \|u\|_{B_{p,p;\theta}^\gamma(D)}\approx \|\psi^{-\delta} u\|_{B_{p,p;\theta+\delta p}^\gamma(D)}.
$$

(iii) (Duality)
Let
\begin{equation*}
1/p+1/p'=1, \quad \theta/p+\theta'/p' = d.
\end{equation*}
Then the dual spaces of $H_{p,\theta}^\gamma(D)$ and $B_{p,p;\theta}^\gamma(D)$ are $H_{p',\theta'}^{-\gamma}(D)$ and $B_{p',p';\theta'}^{-\gamma}(D)$, respectively. Moreover, for $u\in H_{p,\theta}^\gamma(D)$ (resp. $u\in B_{p,p;\theta}^\gamma(D)$), the pairing $(u,\phi)_{D}$, initially defined on test functions $\phi\in C_c^\infty(D)$, can be extended continuously to all of  $H_{p',\theta'}^{-\gamma}(D)$ (resp.$B_{p',p';\theta'}^{-\gamma}(D)$).

(iv) Assume
$\gamma-\frac{d}{p} \geq n+\delta$ for  $n\in\bN_+$ and $\delta\in(0,1)$. Then for all $k\leq n$,
\begin{equation*}
    |\psi^{k+\frac{\theta}{p}} D_x^k u|_{C(D)}+[\psi^{n+\frac{\theta}{p}+\delta} D_x^{n}u]_{C^{\delta}(D)}\leq N(d,\gamma,p,\theta)\|u\|_{H_{p,\theta}^{\gamma}(D)}.
\end{equation*}

(v) (Interpolation)
Let $\kappa\in(0,1)$, $\gamma_i,\theta_i\in\bR$, $i=1,2$, and
$$
\theta=\kappa\theta_1+(1-\kappa)\theta_2, \quad \gamma=\kappa\gamma_1 + (1-\kappa)\gamma_2.
$$
Then, we have the interpolation results
\begin{align*}
H_{p,\theta}^\gamma(D)=[H_{p,\theta_1}^{\gamma_1}(D),H_{p,\theta_2}^{\gamma_2}(D)]_\kappa, \quad B_{p,\theta}^\gamma(D)=(H_{p,\theta_1}^{\gamma_1}(D),H_{p,\theta_2}^{\gamma_2}(D))_\kappa,
\end{align*}
where $[A,B]_\kappa$ (resp. $(A,B)_\kappa$) is the complex (resp. real) interpolation space of $A$ and $B$.

(vi)
The assertions in (i)-(v) also hold true for $H_{p,\theta}^\gamma(D,l_2)$ in place of $H_{p,\theta}^\gamma(D)$.

\end{lem}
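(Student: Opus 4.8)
The unifying device for all six assertions is the localization-and-rescaling system attached to the partition $\{\zeta_n\}$. For $u\in\cD'(D)$ set $u_n:=\zeta_{-n}(e^n\cdot)\,u(e^n\cdot)$ and let $Tu:=(u_n)_{n\in\bZ}$. By \eqref{def. Hptheta}, $T$ is an isometry from $H_{p,\theta}^\gamma(D)$ into the weighted direct sum $\ell_p^\theta(H_p^\gamma):=\{(v_n): \sum_n e^{n\theta}\|v_n\|_{H_p^\gamma}^p<\infty\}$, and likewise for the Besov scale. The plan is to realize each weighted space as a \emph{retract} of the corresponding whole-space sequence space: I would build a synthesis operator $S$ from a genuine partition of unity $\eta_n:=\zeta_n/\sum_m\zeta_m$ (well defined by \eqref{zeta3}) with $S\circ T=\mathrm{id}$, and then deduce every structural property of $H_{p,\theta}^\gamma(D)$ from the known whole-space property of $H_p^\gamma(\bR^d)$ (established in Section \ref{appB}) transported through $T$ and $S$. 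The engine making $S,T$ bounded on every space in sight is a \emph{uniform pointwise-multiplier estimate}: the rescaled cutoffs $\zeta_{-n}(e^n\cdot)$ and, more generally, $\zeta_{-n}(e^n\cdot)\psi^\delta(e^n\cdot)$ have $C^N(\bR^d)$-norms bounded independently of $n$, which is exactly the content of the scaling bound \eqref{zeta2} together with $\psi\approx d_x$ and $\sup_D|d_x^m D_x^{m+1}\psi|<\infty$; since pointwise multiplication by a function with finitely many controlled derivatives is bounded on $H_p^\gamma$ (the number depending on $|\gamma|$), the operator norms of $T,S$ are uniform in $n$, while the finite-overlap of the shells \eqref{zeta1} guarantees $S$ is bounded.

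With this machinery (i) is immediate: $\ell_p^\theta(H_p^\gamma)$ is complete because $H_p^\gamma$ is, and $T\circ S$ is a bounded idempotent whose range is $T(H_{p,\theta}^\gamma(D))$, so $H_{p,\theta}^\gamma(D)$ is isometric to a complemented, hence closed, subspace of a complete space. For density of $C_c^\infty(D)$ I would first truncate the convergent series to finitely many $n$, incurring arbitrarily small tail error, and then mollify each $\zeta_n u$—which has compact support in $D$—using the whole-space density of $C_c^\infty$ in $H_p^\gamma$; the mollifications remain in $C_c^\infty(D)$. Part (ii) is precisely the statement that $\psi^{-\delta}$ intertwines the weights: on $\mathrm{supp}(\zeta_n)$ one has $\psi\approx e^{-n}$, so $\psi^{-\delta}$ multiplies the $n$-th fiber by $\approx e^{n\delta}$, converting the factor $e^{n\theta}$ into $e^{n(\theta+\delta p)}$ after taking $p$-th powers, and the uniform multiplier estimate makes this an equivalence of norms.

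For the duality (iii) I would combine the whole-space duality $(H_p^\gamma)^*=H_{p'}^{-\gamma}$ with the fact that the dual of an $\ell_p$-direct sum is the $\ell_{p'}$-direct sum under the conjugate weight. Transporting through the retract, a functional on $H_{p,\theta}^\gamma(D)$ is represented by the $D$-pairing $(u,v)_D=\int_D uv\,dx$; after the substitution $x=e^ny$ the Jacobian contributes a factor $e^{nd}$ per fiber, and requiring the resulting fiber pairings to be summable by Hölder against the $v$-norm pins down the dual weight exponent $\theta'$ through the relation $\theta/p+\theta'/p'=d$, exactly as stated. Surjectivity—that every bounded functional is of this form—follows since the dual of a retract of $X$ is a retract of $X^*$. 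Part (iv) is the fiberwise Sobolev embedding $H_p^\gamma(\bR^d)\hookrightarrow C^{n,\delta}$, valid under $\gamma-d/p\ge n+\delta$: applying it to $u_n$ and undoing the rescaling combines the factor $e^{nk}$ coming from $D^k$, the single-term bound $\|u_n\|_{H_p^\gamma}\le e^{-n\theta/p}\|u\|$ read off from \eqref{def. Hptheta}, and $\psi\approx e^{-n}$ into the weights $\psi^{k+\theta/p}$ and $\psi^{n+\theta/p+\delta}$ on the left-hand side.

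Finally, (v) follows from the retract principle for interpolation: since $T,S$ are bounded on each endpoint space, $[\,\cdot,\cdot\,]_\kappa$ and $(\,\cdot,\cdot\,)_\kappa$ commute with the retract, reducing the claim to interpolation of the weighted sequence spaces—$[\ell_p^{\theta_1},\ell_p^{\theta_2}]_\kappa=\ell_p^{\theta}$ with $\theta=\kappa\theta_1+(1-\kappa)\theta_2$, and likewise for the real method—tensored with the whole-space interpolation $[H_p^{\gamma_1},H_p^{\gamma_2}]_\kappa=H_p^{\gamma}$ and $(H_p^{\gamma_1},H_p^{\gamma_2})_\kappa=B_{p,p}^{\gamma}$. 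Assertion (vi) is identical line by line, the only input being that the whole-space facts above hold verbatim for $H_p^\gamma(\bR^d;l_2)$, the $l_2$-index being inert under localization and rescaling. I expect the genuine technical work to concentrate in two places: establishing the uniform-in-$n$ multiplier bounds for $\zeta_{-n}(e^n\cdot)$ and $\zeta_{-n}(e^n\cdot)\psi^\delta(e^n\cdot)$ on $H_p^\gamma$ (the backbone of $T,S$, hence of (i), (ii), (v)), and the weight bookkeeping in (iii), where one must verify both the relation $\theta/p+\theta'/p'=d$ and the surjectivity of the representation map; the embedding (iv) and the $l_2$-extension (vi) are then routine.
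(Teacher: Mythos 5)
Your proposal is correct in substance, and it is essentially the paper's own proof: the authors do not argue the lemma themselves but simply cite Proposition 2.2 and Theorem 4.3 of \cite{L00} (cf. \cite{K21}), and the arguments in those references run exactly through your localization--rescaling scheme --- uniform-in-$n$ multiplier bounds for the rescaled cutoffs, reduction of each assertion to its whole-space analogue on $H_p^\gamma(\bR^d)$, the Jacobian bookkeeping $e^{nd}=e^{n\theta/p}\,e^{n\theta'/p'}$ that pins down the dual weight in (iii), and the retract principle for the interpolation in (v). The paper's only added content is the remark that these arguments remain valid when $D$ is a general open set rather than a domain, which your proof respects since nothing in it uses connectedness of $D$.

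One concrete slip to repair: with $\eta_n:=\zeta_n/\sum_m\zeta_m$ your synthesis operator does not satisfy $S\circ T=\mathrm{id}$; indeed $S(Tu)=\bigl(\sum_n\zeta_n^2/\sum_m\zeta_m\bigr)u\neq u$ in general. Take instead $\eta_n:=\zeta_n/\sum_m\zeta_m^2$, so that $\sum_n\eta_n\zeta_n=1$ on $D$; by \eqref{zeta3} and the bounded overlap of the shells in \eqref{zeta1}, $\sum_m\zeta_m^2\geq\bigl(\sum_m\zeta_m\bigr)^2/M>c^2/M$ on $D$ (with $M$ the overlap count), so the derivative bounds \eqref{zeta2} pass to $\eta_n$ by the quotient rule and all your uniform multiplier estimates persist. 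Two smaller points that a full write-up should address: in (iv), passing from fiber bounds on $\zeta_{-n}(e^n\cdot)u(e^n\cdot)$ to pointwise bounds on $u$ uses the same corrected partition, and the H\"older seminorm must be checked separately for $x,y$ lying in distant shells, where $|x-y|\gtrsim d_x\vee d_y$ lets the weighted sup-norm bounds absorb the increment; and in (v) the weighted vector-valued interpolation identities for the $\ell_p$-sums you invoke are the standard ones, applicable because the single pair $(T,S)$ is bounded on both endpoint couples simultaneously.
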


\begin{proof}
See Proposition 2.2 and Theorem 4.3 in \cite{L00} (cf. \cite{K21}). Although the results in \cite{L00} are stated for the case when $D$ is a domain, the arguments provided there remain valid for more general setting considered here, where $D$ is any open set.
\end{proof}

\begin{remark}
Assume that $\theta<d-1+p+\alpha p/2$, $\gamma\in\bR$, and $u\in \psi^{\alpha/2}H_{p,\theta}^{\gamma}(D)$. We claim that  for any  test function $\phi\in C_c^\infty(\bR^d)$, its restriction to $D$, denoted by $\phi|_{D}$,  belongs to the dual space of $\psi^{\alpha/2}H_{p,\theta}^\gamma(D)$. 
As a consequence, we can define a distribution $v$ on $\bR^d$ via the pairing:
$$
(v,\phi)_{\bR^d} := (u,\phi|_D)_D \quad \forall \phi\in C_c^\infty(\bR^d).
$$
This definition is well-posed, and $v$ becomes a distribution on $\bR^d$ with $supp(v)\in \overline{D}$. In particular, if $\gamma\geq0$  and  we can extend  $u\in \psi^{\alpha/2}H_{p,\theta}^{\gamma}(D)$ to all of $\bR^d$ by 
defining it to be zero on $D^c$, then we have  $u\in\cD'(\bR^d)$.

Now we prove the  claim. For any $x_0\in \partial D$, $r>0$, and $\lambda>-1$,  it is known  (see e.g. page 16 of \cite{A91} or \cite[Appendix A.4]{CKR23}) that
$$
\int_{B_r(x_0)} d_x^\lambda dx < \infty.
$$
Using this, one can easily show that
$\psi^n D_x^n \phi|_D \in L_{p',\theta'+\alpha p'/2}(D)$ for all $n\in \bN_+$,
where $1/p+1/p'=1$ and $\theta/p+\theta'/p' = d$.  Applying Lemma \ref{lem_prop} $(iii)$, it follows that $\phi|_D$ lies in the dual space of $\psi^{\alpha/2}H_{p,\theta}^\gamma(D)$, completing the proof of the claim.
\end{remark}

Next, we introduce stochastic Banach spaces.  Throughout this paper, we fix a complete probability space $(\Omega,\cF,\bP)$ equipped with a filtration $\{\cF_t, t\geq0\}$ satisfying the usual conditions.
Let $\cP$ denote the predictable $\sigma$-field generated by $\cF_t$, that is, $\cP$ is the smallest σ-field containing all sets of the form $A\times (s,t]$ where $s<t$ and $A\in \cF_s$. Let $\{w^k_t\}_{k\in\bN}$ be a sequence of  independent one-dimensional Wiener processes adapted to the filtration $\{\cF_t, t\geq0\}$.

For $p\geq 2$, $\theta,\gamma\in\bR$,  and a stopping time $\tau\in(0,\infty]$,  we define the following stochastic Banach spaces on $D$;
$$
\bH_{p,\theta}^\gamma(D,\tau):=L_p(\Omega\times(0,\tau), \cP; H_{p,\theta}^\gamma(D)), \quad \bL_{p,\theta}(D,\tau):=\bH_{p,\theta}^0(D,\tau),
$$
$$
\bH_{p,\theta}^\gamma(D,\tau,l_2):=L_p(\Omega\times(0,\tau), \cP; H_{p,\theta}^\gamma(D,l_2)), \quad \bL_{p,\theta}(D,\tau,l_2):=\bH_{p,\theta}^0(D,\tau,l_2),
$$
$$
U_{p,\theta}^\gamma(D)=L_{p}(\Omega,\cF_0; \psi^{\alpha/2-\alpha/p}B_{p,p;\theta}^{\gamma-\alpha/p}(D)).
$$
We define $\bH_0^\infty(D,\tau)$ to be the set  of all functions of the type
\begin{equation} \label{eq6301049}
    f(t,x)=\sum_{i=1}^{n}1_{(\tau_{i-1},\tau_{i}]}(t) f_{i}(x)
\end{equation}
where $f_{i}\in C_c^\infty(D)$ and $0\leq \tau_{0} \leq \cdots \leq \tau_{n} \leq \tau$ are bounded stopping times. Following the proof of \cite[Theorem 3.10]{KAA} which treats the case $D=\bR^d$, one can show that $\bH_0^\infty(D,\tau)$ is dense in $\bH_{p,\theta}^\gamma(D,\tau)$.  Similarly, we define
 $\bH_0^\infty(D,\tau,l_2)$ to be the space of   all $l_2$-valued functions $g=(g^1,g^2,\cdots)$ such that $g_k=0$ for all sufficiently large $k$, and each component $g^k$ is in $\bH_0^\infty(D,\tau)$.  One can also check that $\bH_0^\infty(D,\tau,l_2)$ is dense in $\bH_{p,\theta}^\gamma(D,\tau,l_2)$.

The space $\frH_{p,\theta,\alpha}^\gamma(D,\tau)$, defined below, will be used as the solution space for the  SPDEs studied in this paper.

\begin{definition}
\label{defn solution}
  Let $p\geq 2, \alpha\in (0,2)$, and $\theta,\gamma\in \bR$.   For any $\cD'(D)$-valued function $u$ defined on $\Omega\times[0,\tau)$, we write $u\in \frH_{p,\theta,\alpha}^\gamma(D,\tau)$ if $u\in \psi^{\alpha/2}\bH_{p,\theta}^\gamma(D,\tau)$, $u(0,\cdot)\in U_{p,\theta}^\gamma(D)$, and there exist $f\in \psi^{-\alpha/2}\bH_{p,\theta}^{\gamma-\alpha}(D,\tau)$ and $g\in \bH_{p,\theta}^{\gamma-\alpha/2}(D,\tau,l_2)$ such that
$$
du = fdt + \sum_{k=1}^\infty g^kdw^k_t, \quad t\leq \tau
$$
in the sense of distributions, that is, 
 for any $\phi \in C_c^\infty(D)$ the equality
\begin{equation} \label{eq3291308}
    (u(t,\cdot),\phi)_D=(u(0,\cdot),\phi)_D+ \int_0^t (f(s,\cdot),\phi)_D ds + \sum_{k=1}^\infty \int_0^t (g^k(s,\cdot),\phi)_D dw^k_t
\end{equation}
holds for all $t\leq \tau$ (a.s.). In this case, we write $\bD u:=f$ and  $\bS u:=g$. 
The norm in $\frH_{p,\theta,\alpha}^\gamma(D,\tau)$ is defined as
\begin{align} \label{eq3291304}
\|u\|_{\frH_{p,\theta,\alpha}^\gamma(D,\tau)}&:= \| \psi^{-\alpha/2} u \|_{\bH_{p,\theta}^\gamma(D,\tau)}+ \|u(0,\cdot)\|_{U_{p,\theta}^\gamma(D)} \nonumber
\\
&\quad + \|\psi^{\alpha/2} f\|_{\bH_{p,\theta}^{\gamma-\alpha}(D,\tau)} + \|g\|_{\bH_{p,\theta}^{\gamma-\alpha/2}(D,\tau,l_2)}.
\end{align}

We also say $u\in \frH_{p,\theta,\alpha,loc}^\gamma(D,\tau)$ if there is a sequence of bounded stopping times $\tau_n\uparrow\tau$ so that $u\in \frH_{p,\theta,\alpha}^\gamma(D,\tau_n)$ for each $n$.
\end{definition}

Below we provide some properties of the space $\frH_{p,\theta,\alpha}^{\gamma}(D,\tau)$. In particular, in Proposition \ref{lem2072238} $(i)$, we show that $\frH_{p,\theta,\alpha}^\gamma(D,\tau)$ is a Banach space.

\begin{prop}
\label{prop holder}
Let   $\gamma, \theta\in \bR$, $T\in(0,\infty)$, $\tau\leq T$ be a bounded stopping time, and $u\in \frH_{p,\theta,\alpha}^{\gamma}(D,\tau)$. 

(i)  If $p\in(2,\infty)$, $1/p<\mu<\nu\leq1/2$, and $\gamma-\nu\alpha-d/p \geq n+\delta$ where $n\in\bN_+$ and $\delta\in(0,1)$, then for each $k=0,1,\cdots,n$, 
\begin{equation} \label{eq4071511}
    \bE \Big|\psi^{\alpha(\nu-1/2)}u \Big|^p_{C^{\mu-1/p}([0,\tau];H_{p,\theta}^{\gamma-\nu \alpha}(D))}\leq N(T) \|u\|_{\frH_{p,\theta,\alpha}^{\gamma}(D,\tau)}^p,
\end{equation}
where $N$ depends only on $d$, $\mu$, $\nu$, $p$, $\theta$, $\alpha$, and $T$.

(ii) If $p\in [2,\infty)$, then $u\in L_p(C([0,\tau];H_{p,\theta}^{\gamma-\alpha/2}(D))$ and
\begin{equation} \label{embed_sup}
\bE \sup_{t\leq \tau} \|u(t,\cdot)\|_{H_{p,\theta}^{\gamma-\alpha/2}(D)}^p \leq N \|u\|_{\frH_{p,\theta,\alpha}^{\gamma}(D,\tau)}^p,
\end{equation}
where $N$ depends only on $d$, $p$, $\theta$, $\alpha$, and $T$.
\end{prop}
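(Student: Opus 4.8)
The plan is to reduce both assertions to their unweighted counterparts on $\bR^d$ via the dyadic localization $\{\zeta_n\}$ and spatial scaling, and then to reassemble through the definition \eqref{def. Hptheta}. Since time is not rescaled, the representation $du=\bD u\,dt+\sum_k\bS u^k\,dw^k_t$ localizes with no commutator: for each $n$ the scaled piece $U_n(t,\cdot):=\zeta_{-n}(e^n\cdot)\,u(t,e^n\cdot)$ satisfies $dU_n=F_n\,dt+\sum_k G_n^k\,dw^k_t$ with $F_n(t,\cdot)=\zeta_{-n}(e^n\cdot)\,\bD u(t,e^n\cdot)$ and $G_n(t,\cdot)=\zeta_{-n}(e^n\cdot)\,\bS u(t,e^n\cdot)$. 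On the support of $\zeta_{-n}$ the factors $\psi^{\pm\alpha/2}$ of Definition \ref{defn solution} equal $e^{\mp n\alpha/2}$ up to bounded multiples (derivatives falling on $\psi$ are controlled by \eqref{zeta2}), so $(U_n,F_n,G_n)$ is a genuine whole-space triple of the coordinated type $U_n\in L_p(H_p^\gamma)$, $F_n\in L_p(H_p^{\gamma-\alpha})$, $G_n\in L_p(H_p^{\gamma-\alpha/2}(l_2))$. The crux is then the unweighted whole-space estimate, which I would establish from the whole-space theory of Section \ref{appB} (cf. \cite{KAA}); one multiplies the scale-$n$ estimate by the power $e^{n\theta}$ prescribed by \eqref{def. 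Hptheta} and sums in $n$, the $\psi$-power identity of Lemma \ref{lem_prop}(ii) guaranteeing that the exponents align.

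For $(ii)$ I argue at unit scale. Write $U_n(t)=U_n(0)+\int_0^tF_n\,dr+M_n(t)$ with $M_n(t)=\sum_k\int_0^tG_n^k\,dw^k_r$. Applying the Bessel potential $(1-\Delta)^{(\gamma-\alpha/2)/2}$ turns $M_n$ into an $L_p$-valued stochastic integral, where for $p\ge2$ the Burkholder--Davis--Gundy inequality together with Doob's maximal inequality give $\bE\sup_{t\le\tau}\|M_n(t)\|_{H_p^{\gamma-\alpha/2}}^p\le N\,\bE\big(\int_0^\tau|G_n|_{H_p^{\gamma-\alpha/2}(l_2)}^2\,dr\big)^{p/2}\le N\,T^{p/2-1}\|G_n\|_{L_p(H_p^{\gamma-\alpha/2}(l_2))}^p$, and $M_n$ admits a continuous modification. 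The drift is continuous into $H_p^{\gamma-\alpha}$; to place $U_n$ in $C([0,\tau];H_p^{\gamma-\alpha/2})$ one combines this with $U_n\in L_p(H_p^\gamma)$ through the Gelfand triple $H_p^\gamma\subset H_p^{\gamma-\alpha/2}\subset H_p^{\gamma-\alpha}$ (the pivot is correct since $2(\gamma-\alpha/2)=\gamma+(\gamma-\alpha)$), using the duality of Lemma \ref{lem_prop}(iii); this is the classical continuity theorem for $\frH_p$-processes. Summing in $n$ yields \eqref{embed_sup}, and no restriction beyond $p\ge2$ is needed because only continuity is claimed.

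Part $(i)$ is the main difficulty, and the essential obstacle is that the target $H_{p,\theta}^{\gamma-\nu\alpha}$ with $\nu\le1/2$ is \emph{more} regular than the space $H_{p,\theta}^{\gamma-\alpha/2}$ in which the stochastic term naturally lives; a direct Burkholder estimate on $M_n(t)-M_n(s)$ controls it only in $H_p^{\gamma-\alpha/2}$, and one checks by an explicit boundary example that no pointwise power of $\psi$ can upgrade $H_{p,\theta}^{\gamma-\alpha/2}$ to $H_{p,\theta}^{\gamma-\nu\alpha}$. The extra smoothness must instead be extracted from the full regularity $U_n\in L_p(H_p^\gamma)$ by interpolation. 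Concretely I would $(a)$ use BDG to get $\bE\|M_n(t)-M_n(s)\|_{H_p^{\gamma-\alpha/2}}^p\le N|t-s|^{p/2-1}\int_s^t\bE|G_n|_{H_p^{\gamma-\alpha/2}(l_2)}^p\,dr$, which via the fractional Sobolev embedding in time $W_p^\beta([0,\tau];B)\hookrightarrow C^{\beta-1/p}([0,\tau];B)$ (valid for $\beta>1/p$, whence the hypothesis $p>2$) yields Hölder regularity of order just below $1/2-1/p$ with values in $H_p^{\gamma-\alpha/2}$; then $(b)$ interpolate this temporal bound in $H_p^{\gamma-\alpha/2}$ against $U_n\in L_p(H_p^\gamma)$, trading part of the temporal regularity for spatial regularity through Lemma \ref{lem_prop}(v), landing $U_n$ in $C^{\mu-1/p}([0,\tau];H_p^{\gamma-\nu\alpha})$ precisely when $1/p<\mu<\nu\le1/2$. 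The deterministic part is easier, as $\int_s^tF_n\,dr$ is already $(1-1/p)$-Hölder into $H_p^{\gamma-\alpha}\subset H_p^{\gamma-\nu\alpha}$. Passing through the $W_p^\beta$ norm at this stage is also what permits interchanging the temporal norm with the $\ell^p$-over-scales sum in \eqref{def. Hptheta} by Fubini, so that summing the scale-$n$ estimates reproduces $\bE|\psi^{\alpha(\nu-1/2)}u|^p_{C^{\mu-1/p}([0,\tau];H_{p,\theta}^{\gamma-\nu\alpha}(D))}$; finally the hypothesis $\gamma-\nu\alpha-d/p\ge n+\delta$ feeds Lemma \ref{lem_prop}(iv) to convert $H_{p,\theta}^{\gamma-\nu\alpha}$-regularity into pointwise control of derivatives up to order $n$.

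To summarize, the technical core is the interpolation lemma of $(i)(b)$ trading temporal Hölder regularity in a coarse space against $L_p$-in-time regularity in a fine one; this is unavoidable because the regularity gain above $H^{\gamma-\alpha/2}$ cannot be produced by BDG alone. Once it is in place the weight and scaling bookkeeping is routine, and all dependence of the constants on $T$ enters only through the powers $T^{p/2-1}$ and $T^{p-1}$ generated by Hölder's inequality in the time variable.
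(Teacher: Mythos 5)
Your localization scheme is the same as the paper's (the proof repeats \cite[Section 6]{K21}, fed by the whole-space lemmas of Appendix \ref{appB}), but your reassembly step contains a genuine gap. You assert that, since time is not rescaled, each scaled triple $(U_n,F_n,G_n)$ can be inserted into a \emph{fixed} whole-space estimate, which is then multiplied by $e^{n\theta}$ and summed, ``the exponents aligning.'' They do not align. By Lemma \ref{lem_prop}(ii) and \eqref{def. Hptheta}, the norm \eqref{eq3291304} localizes as
$\sum_n e^{n(\theta-\alpha p/2)}\|U_n\|^p_{H_p^{\gamma}}+\sum_n e^{n(\theta+\alpha p/2)}\|F_n\|^p_{H_p^{\gamma-\alpha}}+\sum_n e^{n\theta}\|G_n\|^p_{H_p^{\gamma-\alpha/2}(l_2)}$,
so the three components carry mutually offset weights $e^{\mp n\alpha p/2}$. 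A scale-independent inequality multiplied by the single factor $e^{n\theta}$ charges the drift at weight $e^{n\theta}\|F_n\|^p=e^{-n\alpha p/2}\cdot e^{n(\theta+\alpha p/2)}\|F_n\|^p$, exceeding the available budget by the unbounded factor $e^{-n\alpha p/2}$ as $n\to-\infty$ (the boundary scales; recall $D$ is bounded, so the sum runs over $n\leq n_0$). No rescaling of $u$ alone can fix this, because only a time dilation changes the \emph{relative} weights of $u$, the drift, and the noise (drift scales like $c$, noise like $\sqrt{c}$; see \eqref{22.01.10.2342}). This is exactly why the paper's whole-space estimates \eqref{Lp whole C} and \eqref{L2 sup} carry the free parameter $a$, generated by the parabolic time dilation in their proofs: matching all three sums forces $a_n^{p}\approx e^{-n\alpha p/2}$ and $a_n^{-p}\approx e^{n\alpha p/2}$ simultaneously, i.e. $a_n=e^{-n\alpha/2}$, and the resulting factor $a_n^{(2\nu-1)p}$ is the one and only source of the weight $\psi^{\alpha(\nu-1/2)}$ on the left of \eqref{eq4071511}. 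Your argument forgoes time rescaling as a simplification and contains no scale-dependent balancing parameter (your constants involve only powers of $T$, never of $e^{n}$), so the summation cannot be closed, and the weight $\psi^{\alpha(\nu-1/2)}$ you claim to ``reproduce'' at the end has no source in your proof.

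Two secondary points. In (i)(b), the interpolation trading temporal for spatial regularity is the right idea (it is the mechanism inside the cited whole-space result, \cite[Theorem 7.2]{KAA} adapted to $\alpha\in(0,2)$ via \cite[Lemma A.2]{H21}), but Lemma \ref{lem_prop}(v) is a spatial interpolation of the weighted scale, not a theorem interpolating $C^{\beta}([0,\tau];B_0)$ against $L_p((0,\tau);B_1)$; to make it rigorous one must estimate two-point increment moments via $\|w\|_{H_p^{\gamma-\nu\alpha}}\leq\|w\|^{2\nu}_{H_p^{\gamma-\alpha/2}}\|w\|^{1-2\nu}_{H_p^{\gamma}}$, H\"older in $\omega$, and Kolmogorov--Garsia in time, taking care that fixed-time $H_p^\gamma$-moments are \emph{not} controlled by $\|u\|_{\bH^\gamma_p(\tau)}$. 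In (ii), your BDG--Doob bound handles only $M_n$; the drift integral $\int_0^tF_n\,dr$ lives a priori in $H_p^{\gamma-\alpha}$, and its supremum in $H_p^{\gamma-\alpha/2}$ does not follow from $\|F_n\|_{L_p(H_p^{\gamma-\alpha})}$ alone. The paper handles $p=2$ by the It\^o-plus-duality argument of Lemma \ref{L2 whole} (the pairing in $H_2^{\alpha/2}\times H_2^{-\alpha/2}$, where Young's inequality is also where a parameter $a$ could be inserted directly), and derives $p>2$ from part (i) with $\nu=1/2$ together with the embedding $B_{p,p;\theta}^{\gamma-\alpha/p}(D)\subset H_{p,\theta}^{\gamma-\alpha/2}(D)$ for the initial-data term; your appeal to a ``classical continuity theorem'' is available only at $p=2$, so your claim that one argument covers all $p\geq2$ also does not stand as written.
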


\begin{proof}
$(i)$
One needs to repeat the proof of \cite[Section 6]{K21} which treats the case $\alpha=2$.
The difference is that we need to apply \eqref{Lp whole C} with $a=e^{-np\alpha/2}$ instead of \cite[Corollary 4.12]{K21} which treats the case $\alpha=2$. Then we have
\begin{equation*}
    \bE \Big|\psi^{\alpha(\nu-1/2)}(u-u(0,\cdot)) \Big|^p_{C^{\mu-1/p}([0,\tau];H_{p,\theta}^{\gamma-\nu \alpha}(D))}\leq N T^{(\nu-\mu)p}\|u\|_{\frH_{p,\theta,\alpha}^{\gamma}(D,\tau)}^p.
\end{equation*}
Note that if $\gamma_1 > \gamma_2$ and $p\in[2,\infty)$, then $B_{p,p}^{\gamma_1}\subset H_p^{\gamma_2}$. Thus, we have $B_{p,p;\theta}^{\gamma-\alpha/p}(D) \subset H_{p,\theta}^{\gamma-\alpha/2}(D)$, which easily yields \eqref{eq4071511}.
Hence, $(i)$ is proved.

$(ii)$
The case $p>2$ easily follows from \eqref{eq4071511} with $\nu=1/2$.
For $p=2$, we again repeat the proof of \cite[Section 6]{K21} using \eqref{L2 sup} with $a=e^{-n\alpha}$ instead of \eqref{Lp whole C}. 
Thus, \eqref{embed_sup} is proved. It is worth noting that $N$ is independent of $T$ if $p=2$.
The proposition is proved.
\end{proof}

Now we prove that $\frH_{p,\theta,\alpha}^\gamma(D,\tau)$ is a Banach space.

\begin{prop} \label{lem2072238}
Let $\alpha\in(0,2), \gamma\in \bR$, $p\in[2,\infty)$, and $\tau$ be a stopping time.

(i) The space $\frH_{p,\theta,\alpha}^\gamma(D,\tau)$ is a Banach space.

(ii) For any $n\in \bN$ such that $n\geq\gamma$,
    \begin{equation} \label{eq1091823}
        \frH_{p,\theta,\alpha}^n(D,\tau) \bigcap \bigcup_{k=1}^\infty L_p(\Omega,C([0,\tau],C_c^n(G_k))),
    \end{equation}
    where $G_k:=\{d_x>1/k\}$, is dense in $\frH_{p,\theta,\alpha}^\gamma(D,\tau)$.

Moreover, if $u\in \frH_{p,\theta,\alpha}^\gamma(D,\tau)$ satisfies $u(0,\cdot)=0$, then it can be approximated by functions in \eqref{eq1091823} that also have zero initial data.
\end{prop}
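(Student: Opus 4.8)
The plan is to handle (i) and (ii) by observing that all four terms in the norm \eqref{eq3291304} are $L_p$-norms over complete spaces, and that every operation I apply to $u$ will be a deterministic, time-independent, linear operator acting only in the space variable; such operators commute with both $\bD$ and $\bS$ and with the time integral, hence preserve the SPDE structure, so that convergence in $\frH_{p,\theta,\alpha}^\gamma(D,\tau)$ reduces to strong convergence on the underlying weighted spaces.

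For (i), let $\{u_m\}$ be Cauchy in $\frH_{p,\theta,\alpha}^\gamma(D,\tau)$. By the definition of the norm, $\{\psi^{-\alpha/2}u_m\}$, $\{u_m(0,\cdot)\}$, $\{\psi^{\alpha/2}\bD u_m\}$ and $\{\bS u_m\}$ are Cauchy in $\bH_{p,\theta}^\gamma(D,\tau)$, $U_{p,\theta}^\gamma(D)$, $\bH_{p,\theta}^{\gamma-\alpha}(D,\tau)$ and $\bH_{p,\theta}^{\gamma-\alpha/2}(D,\tau,l_2)$ respectively; each of these is complete (being an $L_p$-space valued in a Banach space, cf. Lemma \ref{lem_prop}), so the limits exist and define candidates $u:=\psi^{\alpha/2}\lim_m(\psi^{-\alpha/2}u_m)$, $u_0$, $f$ and $g$. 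Applying the a priori estimate \eqref{embed_sup} of Proposition \ref{prop holder}(ii) to $u_m-u_{m'}$ (after localizing to $\tau\wedge j$ where that estimate applies) shows $\{u_m\}$ is Cauchy in $L_p(\Omega;C([0,\tau];H_{p,\theta}^{\gamma-\alpha/2}(D)))$, and its limit coincides with $u$ since both give the limit of $u_m$ in $\bH_{p,\theta}^{\gamma-\alpha/2}(D,\tau)$. Passing to the limit in \eqref{eq3291308} is then routine: the first two terms converge by the uniform-in-time convergence just obtained, the drift term by H\"older's inequality, and the stochastic term by the Burkholder--Davis--Gundy inequality applied to $\bS u_m-g$. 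This yields $u\in\frH_{p,\theta,\alpha}^\gamma(D,\tau)$ with $u(0,\cdot)=u_0$, $\bD u=f$, $\bS u=g$, and $u_m\to u$ in every component, hence in $\frH_{p,\theta,\alpha}^\gamma(D,\tau)$.

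For (ii), fix $u\in\frH_{p,\theta,\alpha}^\gamma(D,\tau)$ and approximate in two stages. \emph{Stage 1 (cut off near the boundary).} Choose $\eta_k\in C_c^\infty(D)$ with $0\le\eta_k\le1$, $\eta_k\equiv1$ on $G_k$, $\mathrm{supp}\,\eta_k\subset G_{2k}$ and $|D_x^j\eta_k|\le N(j)k^{j}$. As $\eta_k$ is a deterministic spatial multiplier, $\eta_k u\in\frH_{p,\theta,\alpha}^\gamma(D,\tau)$ with $\bD(\eta_k u)=\eta_k\bD u$, $\bS(\eta_k u)=\eta_k\bS u$, $(\eta_k u)(0,\cdot)=\eta_k u(0,\cdot)$; since $\eta_k$ commutes with $\psi$, the convergence $\eta_k u\to u$ reduces to the single assertion that multiplication by $\eta_k$ converges strongly to the identity on each $\bH_{p,\theta}^{\gamma'}(D,\tau)$ (for the relevant $\gamma'\in\{\gamma,\gamma-\alpha,\gamma-\alpha/2\}$) and on $U_{p,\theta}^\gamma(D)$. \emph{Stage 2 (mollify).} For fixed $k$, $\eta_k u$ is supported in $\overline{G_{2k}}\Subset D$, on which $\psi\approx d_x$ is bounded above and below; convolving in $x$ with a standard mollifier $T_\epsilon$ (again deterministic in $\omega,t$) yields $T_\epsilon(\eta_k u)$, which is $C^\infty$ in $x$, supported in some $G_{k'}$, and converges to $\eta_k u$ in $\frH_{p,\theta,\alpha}^\gamma(D,\tau)$ as $\epsilon\to0$, because on such fixed compact support the weighted norms are equivalent to the unweighted norms on $\bR^d$, where $T_\epsilon\to I$ strongly. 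A diagonal choice $\epsilon=\epsilon_k\downarrow0$ produces the approximating sequence.

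It remains to verify that $v:=T_{\epsilon_k}(\eta_k u)$ lies in the class \eqref{eq1091823}. Membership in $\frH_{p,\theta,\alpha}^n(D,\tau)$ is immediate since $v$ is spatially smooth with compact support; and writing $v(t)=v(0)+\int_0^t\bD v\,ds+\sum_j\int_0^t(\bS v)^{j}\,dw^j_s$ with $\bD v,\bS v$ smooth and compactly supported, the Bochner integral is continuous in $t$ in any $H_p^{n''}(\bR^d)$, while the stochastic integral is a continuous $H_p^{n''}$-valued martingale by the Burkholder--Davis--Gundy inequality (taking $n''>n+d/p$ so that $H_p^{n''}\hookrightarrow C^n$); hence $v\in L_p(\Omega;C([0,\tau];C_c^n(G_{k'})))$, after localizing $\tau$ to a bounded stopping time if necessary. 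The \emph{moreover} assertion is automatic, since the initial value of $v$ is $T_{\epsilon_k}(\eta_k u(0,\cdot))$, which vanishes whenever $u(0,\cdot)=0$. The main obstacle is Stage 1: showing the cut-offs $\eta_k$ are uniformly bounded multipliers converging strongly to the identity on the weighted spaces. This is where the weight structure enters decisively — in the $\{\zeta_n\}$-characterization \eqref{def. Hptheta} only the blocks with $e^{-n}\gtrsim1/k$ are affected, and on those the growth $|D_x^j\eta_k|\lesssim k^{j}\approx e^{nj}$ is exactly compensated by the rescaling $x\mapsto e^nx$, so the error is dominated by the tail $\sum_{n:\,e^{-n}\gtrsim1/k}e^{n\theta}\|\zeta_{-n}(e^n\cdot)w(e^n\cdot)\|_{H_p^{\gamma'}}^p$ of a convergent series, which tends to $0$.
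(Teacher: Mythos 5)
Your proposal is correct and follows essentially the same route as the paper: part (i) is the paper's own completeness argument (componentwise limits in the four norm components, then identification of the limit through the maximal estimate \eqref{embed_sup} applied on $\tau\wedge T$ and passage to the limit in \eqref{eq3291308}), and part (ii) realizes exactly the two-stage scheme the paper imports from the proof of \cite[Theorem 2.9]{KL99} --- first cutting off near $\partial D$ with multipliers compatible with the $\{\zeta_n\}$-block structure (the paper's appeal to \cite[Proposition 2.2]{L00}), then smoothing the compactly supported remainder, where the paper cites the whole-space result \cite[Theorem 2.2]{H21} and you instead carry out the mollification and the Burkholder--Davis--Gundy continuity check by hand. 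The only point to keep in view, which you correctly flag and which the paper shares (it too localizes via $\tau\wedge T$), is that the $L_p(\Omega,C([0,\tau],C_c^n(G_k)))$-membership of the approximants and the maximal inequalities used to verify it require reduction to bounded stopping times.
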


\begin{proof}
    $(i)$ We only need to show the completeness of the space. Let $u_n\in \frH_{p,\theta,\alpha}^\gamma(D,\tau)$ be a Cauchy sequence. Then by \eqref{eq3291304}, $(u_n, u_n(0,\cdot), \bD u_n, \bS u_n)$ converges to some $(u, u(0,\cdot), \bD u, \bS u)$ in their corresponding spaces.
    Moreover, due to \eqref{embed_sup}, $u_n$ is Cauchy in $L_p(C([0,\tau\wedge T]; H_{p,\theta}^{\gamma-\alpha/2}(D))$ for any $T>0$.
    Thus, $(u_n(t,\cdot),\phi)\to (u(t,\cdot),\phi)$ as $n\to\infty$,
    which easily leads us that $u$ satisfies \eqref{eq3291308}. Thus, $(i)$ is proved.
    
    $(ii)$ One can prove the claim by following the proof of \cite[Theorem 2.9]{KL99}, which treats the case $\alpha=2$ and $D=\bR^d_+$. Here, we only address the differences briefly.
    In \cite{KL99}, the first step is to approximate $u \in \frH_{p,\theta,\alpha}^\gamma(D,\tau)$ by a sequence of functions compactly supported in $D$ by using \cite[Corollary 1.20]{K99}. In our case, we can use \cite[Proposition 2.2]{L00} to prove the corresponding claim.
 Then, instead of \cite[Theorems 7.1 and 7.2]{KAA}, one just needs to apply \cite[Theorem 2.2]{H21}. 
 The lemma is proved.
\end{proof}

\begin{remark} \label{rem2081915}
$(i)$ The argument presented in \cite[Theorem 2.9]{KL99} leads us that $u \in \frH_{p,\theta,\alpha}^\gamma(D,\tau)$ can be approximated by a sequence of functions, say $u_n$, in the set \eqref{eq1091823} where $u_n(0,\cdot)$, $\bD u_n$, and $\bS u_n$ are infinitely differentiable in $x$ and have compact supports.

$(ii)$ Suppose that $u$ is nonrandom and satisfies $\bS u=0$.  Then, following the argument in \cite[Remark 5.5]{K99}, one can easily check that there is a sequence of nonrandom  $u_n \in \frH_{p,\theta,\alpha}^{\gamma}(D,\tau)$ such that $u_n(\cdot,\cdot)\in C_c^\infty([0,\tau]\times D)$, $\bS u_n=0$  and $u_n\to u$ in $\frH_{p,\theta,\alpha}^{\gamma}(D,\tau)$. 

\end{remark}

\subsection{Nonlocal operator}

We now introduce the class of  nonlocal operators considered in this paper, along with the assumptions we impose on them.

We begin with the concept of a nondegenerate $\alpha$-stable symmetric L\'evy measure.
 A L\'evy measure $\nu$ on $\bR^d$ is a $\sigma$-finite positive measure satisfying $\nu(\{0\})=0$ and
$$
\int_{\bR^d} (1\wedge|y|^2) \nu(dy)<\infty.
$$
The measure $\nu$ is called symmetric if $\nu(-dx)=\nu(dx)$.
For $\alpha\in(0,2)$, we say that   $\nu$ is $\alpha$-stable if for any $c>0$ and measurable set $A\subset \bR^d$, it holds that 
\begin{equation*}
    \nu(cA)=c^{-\alpha}\nu(A).
\end{equation*}
It is well-known (see e.g. \cite[Theorem 14.3]{S13}) that there is a nonnegative finite measure $\mu$ on the unit sphere $S^{d-1}$, called the spherical part of $\nu$, such that
\begin{equation} \label{eq1022000}
  \nu(A)= \int_{S^{d-1}} \int_0^\infty 1_{A}(r\theta) \frac{dr}{r^{1+\alpha}} \,\mu(d\theta), \quad A\subset \bR^d.
\end{equation}

  We now state the  assumption on the family of L\'evy measures $\nu_t = \nu_t(\omega,dy)$ appearing in \eqref{oper}.  Throughout,  we assume that   for each $(\omega,t)\in \Omega\times (0,\tau)$, the measure $\nu_t$ is a symmetric
 $\alpha$-stable  L\'evy measure, and can  be expressed as
\begin{equation*}
  \nu_t(\omega,A)= \int_{S^{d-1}} \int_0^\infty 1_{A}(r\theta) \frac{dr}{r^{1+\alpha}}\,\mu_t(\omega,d\theta), \quad A\subset \bR^d
\end{equation*}
where $\mu_t(\omega,d\theta)$ is the spherical part of $\nu_t(\omega,d\theta)$. 

\begin{assumption} \label{ass_nu}
  $(i)$ For any measurable set $A\subset \bR^d$, $\nu_t(\omega,A)$ is predictable.

  $(ii)$ There exist $\Lambda_0>0$ and (nonrandom) nondegenerate symmetric $\alpha$-stable L\'evy measure $\nu^{(1)}(d\theta)$ such that
    \begin{align*} 
    \nu_t(\omega, d\theta) \geq \nu^{(1)}(d\theta), \quad \forall t\in(0,\tau),
  \end{align*}
  and
  \begin{align*}
\Lambda_0\leq \inf_{\rho\in S^{d-1}} \int_{S^{d-1}} |\rho\cdot\theta|^\alpha \mu^{(1)} (d\theta),
  \end{align*}
 where $\mu^{(1)}(d\theta)$ is the spherical part of $\nu^{(1)}(d\theta)$.

    $(iii)$ There exists $\Lambda_1>0$ such that
      \begin{align*}
    \int_{S^{d-1}} \mu_t(\omega, d\theta)\leq \Lambda_1 <\infty, \quad t\in(0,\tau),
  \end{align*}
  where $\mu_t(\omega, d\theta)$ is the spherical part of $\nu_t(\omega, d\theta)$.
\end{assumption}

In the following lemma, we show that $L_t$ is a  bounded operator from $\psi^{\alpha/2}\bH_{p,\theta}^\gamma(D,\tau)$ to $\psi^{-\alpha/2}\bH_{p,\theta}^{\gamma-\alpha}(D,\tau)$.

\begin{lemma} \label{lem1072135}
  Let $p\in(1,\infty)$ and Assumption \ref{ass_nu} hold. 
 Assume that $\theta\in(d-1-\alpha p/2,d-1+p+\alpha p/2)$ if $D$ is a convex domain with nonempty boundary, and $\theta\in (d-\alpha p/2, d+\alpha p/2)$ if $D$ is an open set with nonempty boundary. 
 
 $(i)$ For any $\gamma\in[0,\alpha]$ and $u\in \psi^{\alpha/2}\bH_{p,\theta}^\gamma(D,\tau)$, $L_tu$ defined as
  \begin{align} \label{eq4252225}
(L_t u, \phi)_D:=(u, L_t\phi)_{D}, \quad \phi\in C^{\infty}_c(D)
\end{align}
is well-defined and belongs to $\psi^{-\alpha}\bH_{p,\theta}^{\gamma-\alpha}(D)$. Moreover,
\begin{align*}
\| \psi^{\alpha/2} L_t u \|_{\bH_{p,\theta}^{\gamma-\alpha}(D,\tau)} \leq N \| \psi^{-\alpha/2} u \|_{\bH_{p,\theta}^{\gamma}(D,\tau)},
\end{align*}
where $N=N(d,p,\theta,\gamma,\alpha,\Lambda_0,\Lambda_1,D)$.

$(ii)$ If $L_t=-(-\Delta)^{\alpha/2}$, then the claims in $(i)$ hold true for any $\gamma\in\bR$ with the constant $N=N(d,p,\theta,\gamma,\alpha,D)$.
\end{lemma}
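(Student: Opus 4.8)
The plan is to reduce the weighted estimate on $D$ to a scale-invariant estimate on $\bR^d$ through the dyadic partition of unity $\{\zeta_n\}$, and then to control the genuinely nonlocal interactions between different dyadic shells by hand. As a preliminary step I would establish the corresponding whole-space mapping property: for each fixed $(\omega,t)$, the operator $L_t$ is bounded from $H_p^\gamma(\bR^d)$ into $H_p^{\gamma-\alpha}(\bR^d)$, with operator norm depending only on $d,p,\gamma,\alpha,\Lambda_0,\Lambda_1$. For fixed $(\omega,t)$ the symbol of $L_t$ is $\mathfrak m_t(\xi)=c_{d,\alpha}\int_{S^{d-1}}|\xi\cdot\theta|^\alpha\,\mu_t(d\theta)$, which is homogeneous of degree $\alpha$ and, by Assumption \ref{ass_nu}$(ii)$--$(iii)$, comparable to $|\xi|^\alpha$. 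The endpoint $\gamma=\alpha$ is the decisive $L_p$-estimate (this is where the reinforced nondegeneracy $\Lambda_0$ is used), the case $\gamma=0$ is immediate, and the intermediate range $\gamma\in[0,\alpha]$ then follows by interpolation via Lemma \ref{lem_prop}$(v)$. The restriction to this range reflects that, since $\mu_t$ may be singular, $\mathfrak m_t$ need not be smooth enough to serve as a H\"ormander--Mikhlin multiplier of higher order; by contrast, for $L_t=-(-\Delta)^{\alpha/2}$ the symbol $|\xi|^\alpha$ is smooth away from the origin, so the estimate persists for all $\gamma\in\bR$, which gives part $(ii)$.

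Next I would localize. The key structural fact is the scaling identity
\begin{equation*}
(L_t u)(e^n x)=e^{-n\alpha}\,(L_t u_n)(x),\qquad u_n(x):=u(e^n x),
\end{equation*}
which holds \emph{exactly} because $\nu_t$ is $\alpha$-homogeneous (the same measure reappears after rescaling). Using the equivalent norm \eqref{def. Hptheta}, the fact that $\psi^{\alpha/2}\approx e^{n\alpha/2}$ on $\mathrm{supp}\,\zeta_{-n}$, and this identity, the quantity $\|\psi^{\alpha/2}L_tu\|_{H_{p,\theta}^{\gamma-\alpha}(D)}^p$ becomes a weighted sum over shells of $H_p^{\gamma-\alpha}(\bR^d)$-norms of the rescaled pieces $\zeta_{-n}(e^n\cdot)\,(L_t u_n)$, the powers of $e^n$ combining consistently with the weight $e^{n\theta}$. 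To separate the local from the nonlocal part I would fix a cutoff $\eta_n$ equal to $1$ on a neighborhood of $\mathrm{supp}\,\zeta_{-n}$ and supported in a bounded number of adjacent shells, and split
\begin{equation*}
\zeta_{-n}\,L_t u=\zeta_{-n}\,L_t(\eta_n u)+\zeta_{-n}\,L_t\big((1-\eta_n)u\big).
\end{equation*}
The first term is local: after rescaling it is handled by the whole-space estimate of the first step applied to $\eta_n(e^n\cdot)u(e^n\cdot)$, with the leftover cutoff $\zeta_{-n}(e^n\cdot)$ absorbed as a pointwise multiplier (its derivatives are controlled at unit scale by \eqref{zeta2}); summing over $n$ reproduces $\|\psi^{-\alpha/2}u\|_{H_{p,\theta}^\gamma(D)}^p$ since $\eta_n$ overlaps only finitely many shells.

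The main obstacle is the second, purely nonlocal tail term $\zeta_{-n}\,L_t((1-\eta_n)u)$. For $x\in\mathrm{supp}\,\zeta_{-n}$ the function $(1-\eta_n)u$ vanishes near $x$, so the ``$-2u(x)$'' contribution drops out and $L_t((1-\eta_n)u)(x)$ only integrates $u$ over points separated from $x$ by a distance comparable to $d_x$; I would therefore bound it using the tail decay $\int_{|y|>r}\nu_t(dy)\lesssim r^{-\alpha}$ and then estimate the resulting double sum over pairs of shells. Convergence of this double sum, and hence the admissible range of $\theta$, is precisely where the geometry of $D$ enters: for convex $D$ the distance function is well controlled along segments joining two boundary-near points, which yields the sharp window $\theta\in(d-1-\alpha p/2,\,d-1+p+\alpha p/2)$, whereas for a general open set the boundary may fold so that Euclidean and boundary distances decouple, forcing the narrower window $\theta\in(d-\alpha p/2,\,d+\alpha p/2)$.

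Finally, the well-definedness of \eqref{eq4252225} I would settle exactly as in the Remark following Lemma \ref{lem_prop}: for $\phi\in C_c^\infty(D)$ the function $L_t\phi$ is smooth with $|y|^{-d-\alpha}$ tails at infinity and a controlled boundary behavior, so its restriction to $D$ lies in the dual space $\psi^{-\alpha/2}H_{p',\theta'}^{-\gamma}(D)$ under the stated constraint on $\theta$; Lemma \ref{lem_prop}$(iii)$ then makes the pairing $(u,L_t\phi)_D$ meaningful, and the stochastic Fubini/density argument underlying Proposition \ref{lem2072238} guarantees that the predictability and the $(\omega,t)$-measurability of the estimate carry over, giving the asserted membership in $\psi^{-\alpha}\bH_{p,\theta}^{\gamma-\alpha}(D)$ together with the uniform bound. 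For the fractional Laplacian the same scheme applies for every $\gamma\in\bR$, since the whole-space step already holds in that generality and the localization and tail estimates are insensitive to the sign of $\gamma$, which establishes $(ii)$.
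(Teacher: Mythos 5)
Your overall architecture (a whole-space estimate followed by dyadic localization with a near/far splitting and tail control) is a legitimate alternative to the paper's proof, and in fact it is roughly how the cited reference \cite{DR24} itself proceeds; but it is very different from what the paper actually does, and your sketch has two genuine gaps. The paper's proof is short and citation-based: after reducing by density to $u\in\bH_0^\infty(D,\tau)$, it takes the weighted endpoint $\gamma=\alpha$ \emph{directly} from \cite[Lemma 4.10]{DR24} (weights, $\theta$-windows, and the convex/general dichotomy included), obtains $\gamma=0$ by a duality argument using the symmetry \eqref{eq4252225} together with Lemma \ref{lem_prop} $(iii)$ (note the $\theta$-windows are self-dual under $\theta/p+\theta'/p'=d$), gets $\gamma\in(0,\alpha)$ by complex interpolation of operators, and handles $(ii)$ by repeating \cite[Corollary 4.5]{CKR23}. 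The duality step is what lets the paper avoid any direct estimate at negative order; your plan never uses it and instead proposes to prove the negative-order cases by the same localization machinery.

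The two gaps. First, your claim for part $(ii)$ that ``the localization and tail estimates are insensitive to the sign of $\gamma$'' is false as stated: for $\gamma<0$ the function $u$ is merely a distribution, so the pointwise far-field bound via $\int_{|y|>r}\nu_t(dy)\lesssim \Lambda_1 r^{-\alpha}$ and Minkowski's inequality is meaningless; and for $\gamma>\alpha$ the tail term $\zeta_{-n}L_t((1-\eta_n)u)$ must be estimated in a \emph{positive}-order norm $H_p^{\gamma-\alpha}$, which requires derivative bounds on the truncated kernel. Such bounds exist precisely because $(-\Delta)^{\alpha/2}$ has the smooth, isotropic kernel $c|y|^{-d-\alpha}$ (this is the content of the commutator estimates in \cite{CKR23} that the paper invokes), and they fail for general $L_t$ with singular spectral measure --- e.g.\ for the cylindrical operator, $L_t\phi$ for $\phi\in C_c^\infty$ is \emph{not} smooth with $|x|^{-d-\alpha}$ decay, contrary to your final paragraph: it concentrates anisotropically along lines through $\mathrm{supp}\,\phi$ in the directions of $\mathrm{supp}\,\mu_t$. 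Second, at the whole-space step, comparability of the symbol $\mathfrak m_t(\xi)\approx|\xi|^\alpha$ yields only the $L_2$ case; for $p\neq2$ the endpoint $\|L_tu\|_{L_p}\leq N\|u\|_{H_p^\alpha}$ needs an actual argument (e.g.\ the directional decomposition $L_t=\int_{S^{d-1}}(-(-\partial^2_\theta)^{\alpha/2})\,\mu_t(d\theta)$ plus a one-dimensional multiplier theorem and Minkowski --- note this direction uses only $\Lambda_1$, not $\Lambda_0$, so your parenthetical about where $\Lambda_0$ enters is misplaced: the lemma is a boundedness statement, not an a priori estimate). Relatedly, the hardest quantitative content --- that the shell-interaction double sum converges exactly on $\theta\in(d-\alpha p/2,d+\alpha p/2)$ for general open sets and on the wider window $(d-1-\alpha p/2,d-1+p+\alpha p/2)$ for convex domains --- is asserted via a heuristic about distances along segments; the separation bound $|x-z|\gtrsim e^{\max(m,n)}$ you indicate gives a window for general open sets, but the convexity improvement does not follow from the stated reasoning and is left unproven, whereas the paper obtains it from \cite[Lemma 4.10]{DR24}.
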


\begin{proof}
$(i)$
Since $\bH_0^\infty(D,\tau)$ is dense in $\psi^{\alpha/2}\bH_{p,\theta}^{\gamma}(D,\tau)$, we may assume that $u\in \bH_0^\infty(D,\tau)$.

First, the case $\gamma=\alpha$   follows from \cite[Lemma 4.10 $(i)$]{DR24} which treats  nonrandom operators. 
Now let $\gamma=0$. 
By using Lemma \ref{lem_prop} $(iii)$ and the above case $\gamma=\alpha$, for any $\phi\in C_c^\infty(D)$, \eqref{eq4252225} is well-defined, and 
\begin{align*} 
\left|\bE\int_0^\tau(L_t u, \phi)_D dt\right| &= \left|\bE\int_0^\tau(u, L_t\phi)_D dt\right| 
\\
&\leq N \|\psi^{-\alpha/2}u\|_{\bL_{p,\theta}(D,\tau)}\|\psi^{\alpha/2}L_t\phi\|_{\bL_{p',\theta'}(D,\tau)}
\\
&\leq N \|\psi^{-\alpha/2}u\|_{\bL_{p,\theta}(D,\tau)}\|\psi^{-\alpha/2}\phi\|_{\bH_{p',\theta'}^{\alpha}(D,\tau)},
\end{align*}
where $1/p+1/p'=1$ and $\theta/p+\theta'/p'=d$. Again by Lemma \ref{lem_prop} $(iii)$, the claim is proved for $\gamma=0$.
For the case $\gamma \in (0,\alpha)$, it is enough to use  the complex
interpolation of operators (see e.g. \cite[Theorem C.2.6]{HVVW16}).

$(ii)$ If $L_t=-(-\Delta)^{\alpha/2}$, one just needs to repeat the proof of \cite[Corollary 4.5]{CKR23}. We remark that the proof remains valid for general open sets, even though \cite{CKR23} specifically addresses $C^{1,1}$ open sets.
\end{proof}

\subsection{Spatially homogeneous noise} \label{sec_defnoise}

We consider  Gaussian noise defined on $\bR^{d+1}$ that is white in time and spatially homogeneous.  The spatial correlation of the noise is characterized by a 
 nonnegative and nonnegative definite tempered measure $\Pi(dx)$ on $\bR^d$, which satisfies  for some $k\geq0$ 
\begin{equation} \label{eq3101709}
    \int_{\bR^d} \frac{1}{1+|x|^k} \Pi(dx) <\infty 
\end{equation}
and 
$$
\int_{\bR^d} (\varphi*\overline{\varphi})(x) \Pi(dx)\geq0
$$
for any $\varphi \in \cS(\bR^d)$, where  $\overline{\varphi}(x):=\varphi(-x)$.  Condition \eqref{eq3101709} ensures that the integral above is well-defined.  In fact,
 for any $\varphi_1,\varphi_2\in \cS(\bR^d)$ and $l>d+k$, we have the following estimate:
\begin{equation} \label{eq5010022}
    \left|\int_{\bR^d} (\varphi_1*\overline{\varphi_2})(x) \Pi(dx)\right| \leq N\|(1+|y|^{l})\varphi_1\|_{L_\infty(\bR^d)}\|(1+|y|^{l})\varphi_2\|_{L_\infty(\bR^d)},
\end{equation}
for some constant $N$. 
To prove this,  note
\begin{align*}
    &|\varphi_1*\overline{\varphi_2}(x)| \nonumber
    \\
    &\leq N \|(1+|y|^{l})\varphi_1\|_{L_\infty(\bR^d)}\|(1+|y|^{l})\varphi_2\|_{L_\infty(\bR^d)} \int_{\bR^d} (1+|y|^{l})^{-1} (1+|x-y|^{l})^{-1} dy,
\end{align*}
and since $l > d+k$,
\begin{align*}
    &\int_{\bR^d} (1+|y|^{l})^{-1} (1+|x-y|^{l})^{-1} dy = \int_{|y|\leq |x|/2} \cdots + \int_{|y| > |x|/2} \cdots \nonumber
    \\
    &\leq N (1+|x|^{l})^{-1}\int_{|y|\leq |x|/2} dy + N\int_{|y| > (1+|x|)/2} |y|^{-l} dy \nonumber
    \\
    &\leq N(1+|x|^{l-d})^{-1} \leq N(1+|x|^{k})^{-1}.
\end{align*}
Thus, \eqref{eq3101709}  yields \eqref{eq5010022}.

We say that $W$ is a spatially homogeneous noise with a covariance measure $\Pi(dx)$ if, for any $\varphi_1,\varphi_2\in \cS(\bR^{d+1})$,   the random variables $W(\varphi_1)$ and $W(\varphi_2)$ are  mean zero Gaussian with  covariance  given by
\begin{equation} \label{eq3051340}
    \bE[W(\varphi_1)W(\varphi_2)] = \int_0^\infty \int_{\bR^d} \left(\varphi_1(t,\cdot)*\overline{\varphi}_2(t,\cdot)\right) (x) \, \Pi(dx) dt.
\end{equation}
The right-hand side of the expression above is well-defined due to \eqref{eq5010022} and the fact that $\Pi(dx)$ is nonnegative definite.
Since every tempered measure induces a distribution,  we can express \eqref{eq3051340} as follows:
\begin{equation*}
    \bE[W(\varphi_1)W(\varphi_2)] = \int_0^\infty \left(\Pi,\varphi_1(t,\cdot)*\overline{\varphi}_2(t,\cdot)\right)_{\bR^d} dt,
\end{equation*}
where $\Pi \in \cD'(\bR^d)$ is the nonnegative and nonnegative definite distribution corresponding to $\Pi(dx)$ (see also \cite[Theorem 2.1.1]{GV64}).  For convenience, we use the same symbol $\Pi$ to denote the measure and its corresponding distribution.

Next, we consider a stochastic integral with respect to $W$.
Let us define the semi-inner product on $\cS(\bR^d)$ by
\begin{equation} \label{eq4011722}
     \langle \varphi_1,\varphi_2 \rangle_{\cH} := \int_{\bR^d} \left(\varphi_1*\overline{\varphi}_2\right) (x) \, \Pi(dx) = (\Pi, \varphi_1*\overline{\varphi}_2)_{\bR^d},
\end{equation}
and
\begin{equation*}
    \|\varphi_1\|_{\cH}^2 := \langle \varphi_1,\varphi_1 \rangle_{\cH},
\end{equation*}
where $\varphi_1,\varphi_2 \in \cS(\bR^d)$. We remark that this object is well-defined due to \eqref{eq5010022}. Moreover, it follows from \eqref{eq3051340} that $\| \varphi,\varphi\|_{\cH} \geq0$ for any $\varphi \in\cS(\bR^d)$, which allows us to identify functions $\varphi_1$ and $\varphi_2$ satisfying $\| \varphi_1-\varphi_2 \|_{\cH} =0$. Under this identification, by $\cH$ we denote the completion of $\cS(\bR^d)$ with the norm $\| \cdot,\cdot \|_{\cH}$.

By employing the Walsh's stochastic integral, $W(\cdot)$ can be extended to a worthy martingale measure so that for any $g \in L_2(\Omega\times(0,\infty),\cP;\cH)$ the stochastic integral with respect to $W$,
\begin{equation} \label{eq3171503}
    \int_0^{t} \int_{\bR^d} g(s,y)W(dyds),
\end{equation}
is well-defined. For more details, we refer the reader to \cite[Section 2]{D99} or \cite{DF98, Dalang}.

We now present  an equivalent form of the stochastic integral above.  
First, recall that $\cS(\bR^d)$, the Schwartz space, is separable under the Schwartz semi-norms (see e.g. \cite[Definition 2.2.1]{G08} for the definition of the norms). Since $\cH$ is the completion of $\cS(\bR^d)$ under $\cH$-norm, it is also separable due to  inequality \eqref{eq5010022}.  This implies the existence of  a complete orthonormal system $\{e_k\}\subset \cS(\bR^d)$ in $\cH$.  For each $e_k$, since $1_{(0,t)}(s)e_k(x) \in L_2(\Omega\times(0,\infty),\cP;\cH)$, we can define
\begin{equation*}
    w_t^k:=\int_0^t \int_{\bR^d} e_k(y) W(dyds) = \int_0^{\infty} \int_{\bR^d} 1_{(0,t)}(s) e_k(y) W(dyds).
\end{equation*}
Using the covariance formula \eqref{eq3051340}, one can verify that the collection $\{w_t^k\}_{k\in\bN}$ forms a sequence of independent standard Wiener processes.  For example, 
\begin{align*}
    \bE(w_t^k w_s^l)= \int_0^\infty \int_{\bR^d} 1_{(0,t\wedge s)}(r) \langle e_k,e_l \rangle_{\cH} dr = (t \wedge s) \delta_{k,l},
\end{align*}
where $\delta_{k,l}$ is the Kronecker delta function such that $\delta_{k,l}=1$ only if $k=l$. Similarly, one can check that for each $k\neq l$, $w^k_t$ and $w^l_t$ are independent   Gaussian processes.

As a result,  stochastic integral \eqref{eq3171503} can be expressed as an infinite series of It\^o integrals:
\begin{equation} \label{eq3232128}
    \int_0^t \int_{\bR^d} g(s,y)W(dyds) = \sum_{k=1}^\infty \int_0^t \langle g(s,\cdot), e_k \rangle_{\cH} dw_s^k.
\end{equation}
This representation provides a computationally useful way of handling SPDEs driven by $W$.

\begin{example} \label{ex_noise}
    We present some examples of spatially homogeneous noises.

    $(i)$ Let $\Pi(dx):=\delta_0(dx)$. Then, $W$ becomes the space-time white noise. Indeed, by \eqref{eq3051340},
    \begin{equation*} 
    \bE[W(1_{(0,t)\times A})W(1_{(0,s)\times B})] = (t\wedge s) (1_{A}*\overline{1}_{B})(0) = (t\wedge s)|A\cap B|,
\end{equation*}
where $t,s>0$ and $A,B\subset \bR^d$.

    $(ii)$ For $\beta\in(0,d)$, $\Pi(dx):=|x|^{-\beta}dx$ is said to be the Riesz type kernel. Here we only show that $\Pi(dx)$ is nonnegative definite. It is well-known that the Fourier transform of $|x|^{-\beta}$ is given by $c|\xi|^{\beta-d}$ for some $c>0$. Thus,
    \begin{equation*}
        \int_{\bR^d} (\varphi*\overline{\varphi})(x)\Pi(dx) = c\int_{\bR^d} |\cF_d(\varphi)(\xi)|^2|\xi|^{\beta-d} d\xi \geq0.
    \end{equation*}

    $(iii)$ Let $f$ be a nonnegative bounded continuous function satisfying nonnegative definite condition. Then $\Pi(dx):=f(x)dx$ satisfies \eqref{eq3101709}. 
 For instance, we can consider $\Pi(dx):=e^{-|x|^\beta}$ with $\beta\in(0,2]$, which is called the Ornstein-Uhlenbeck type kernel.
    Lastly, let us consider the simplest case $f=1$. In this case, the norm in $\cH$ is given by
    \begin{equation*}
        \|\varphi\|_{\cH}^2=\int_{\bR^d} \varphi*\overline{\varphi}(x)dx = \left( \int_{\bR^d}\varphi dx \right)^2=:c^2,
    \end{equation*}
    where $c\in\bR$.
    Thus, for $e_0\in \cS(\bR^d)$ with unit integral,
        \begin{equation*}
        \|\varphi-ce_0\|_{\cH}^2 = \left( \int_{\bR^d}(\varphi-ce_0) dx \right)^2=0,
    \end{equation*}
    which implies that $\cH$ is a one dimensional space spanned by a single element $e_0$. Moreover, $W$ becomes a one dimensional Wiener process, i.e.,
    \begin{equation*}
        \int_0^t\int_{\bR^d} g(s,y)W(dyds)=\int_0^t\int_{\bR^d} g(s,y)dydw_s,
    \end{equation*}
    where $w_t$ is a Wiener process.
\end{example}

\section{Main results} \label{sec_3}

In this section we present our regularity theory for SPDEs driven by spatially homogeneous noises, as well as for SPDEs  with super-linear multiplicative noises.

\subsection{Semi-linear SPDE driven by Wiener processes}

Consider  the semi-linear SPDE
\begin{equation} \label{linear}
\begin{cases}
du=\left(L_tu+f(u)\right)dt + \sum_{k=1}^\infty g^k(u)dw^k_t,\quad &(t,x)\in(0,\tau)\times D,
\\
u(0,\cdot)=u_0,\quad & x\in D,
\\
u(t,x)=0,\quad &(t,x)\in [0,\tau]\times D^c.
\end{cases}
\end{equation}
Here, $\{w^k_t\}_{k\in\bN}$ is a sequence of  independent one-dimensional Wiener processes relative to the filtration $\{\cF_t, t\geq0\}$.

First, we  address the notion of solution to problem \eqref{linear}.

\begin{definition} \label{def_linear}
Let $\gamma\in\bR$ and $\tau$ be a stopping time.  
For a $\cD'(\bR^d)$-valued function $u$ defined on $\Omega\times [0,\tau)$, we say that $u$ is a solution to \eqref{linear} in  the space $\frH_{p,\theta,\alpha}^{\gamma}(D,\tau)$ if its restriction to $D$ belongs to  $\frH_{p,\theta,\alpha}^{\gamma}(D,\tau)$, $supp(u)\in \overline{D}$, and $u$ satisfies \eqref{linear} in the sense of distributions (see Definition \ref{defn solution}).
\end{definition}

Now we impose our assumption on    $f(u)$ and $g(u)$ which depend on $\omega,t,x$, and $u$. The argument $\omega$ is omitted as usual.

\begin{assumption} \label{ass_semi}
$(i)$ For any $u\in \psi^{\alpha/2}H_{p,\theta}^{\gamma}(D)$, $f(t,x,u)$ and $g(t,x,u)$ are predictable functions taking values in $\psi^{-\alpha/2}H_{p,\theta}^{\gamma-\alpha}(D)$ and $H_{p,\theta}^{\gamma-\alpha/2}(D,l_2)$, respectively. Moreover,
$$
f(0)\in \psi^{-\alpha/2}\bH_{p,\theta}^{\gamma}(D,\tau), \quad g(0) \in \bH_{p,\theta}^{\gamma+\alpha/2}(D,\tau,l_2).
$$

$(ii)$ For any $\varepsilon>0$, there exists a constant $N_\varepsilon>0$ such that for any $\omega,t$ and $u,v\in \psi^{\alpha/2}H_{p,\theta}^{\gamma}(D)$,
\begin{align} \label{eq106121}
&\|f(t,\cdot,u)-f(t,\cdot,v)\|_{H_{p,\theta+\alpha p/2}^{\gamma-\alpha}(D)}^p  + \|g(t,\cdot,u)-g(t,\cdot,v)\|_{H_{p,\theta}^{\gamma-\alpha/2}(D,l_2)}^p \nonumber
\\
&\leq \varepsilon \|u-v\|^p_{H_{p,\theta-\alpha p/2}^{\gamma}(D)} + N_\varepsilon \|u-v\|_{H_{p,\theta}^{\gamma-\alpha/2}(D)}^p.
\end{align}
\end{assumption}

\begin{remark}
Suppose $D$ is bounded. Then by \eqref{eq5111835} and Lemma \ref{lem_prop} $(v)$, if
    \begin{align*}
&\|f(t,\cdot,u)-f(t,\cdot,v)\|_{H_{p,\theta+\alpha p/2}^{\gamma-\alpha}(D)}^p  + \|g(t,\cdot,u)-g(t,\cdot,v)\|_{H_{p,\theta}^{\gamma-\alpha/2}(D,l_2)}^p \nonumber
\\
&\leq N \|u-v\|_{H_{p,\theta'}^{\gamma'}(D)}^p
\end{align*}
for some $\theta'>\theta-\alpha p/2$ and $\gamma'<\gamma$, then the condition \eqref{eq106121} holds true.
\end{remark}

 Here is the first main result of this subsection.  The proof  is placed in Section \ref{proof semilinear}.

\begin{thm} \label{thm_white}
Let $\alpha\in(0,2), \sigma\in(0,1)$, $p\in[2,\infty)$, and $T\in(0,\infty)$. Let $\tau\leq T$ be a bounded stopping time. 
Assume that $\theta\in(d-1,d-1+p)$ if $D$ is a bounded $C^{1,\sigma}$ convex domain, and $\theta\in(d-\alpha/2,d-\alpha/2+\alpha p/2)$ if $D$ is a bounded $C^{1,\sigma}$ open set. Also, suppose that Assumptions \ref{ass_nu} and \ref{ass_semi} are satisfied.

$(i)$ If $\gamma\in [0,\alpha]$, then for any $u_0\in U_{p,\theta}^{\gamma}(D)$ there is a unique solution $u$ to \eqref{linear} in the space $u\in \frH_{p,\theta,\alpha}^{\gamma}(D,\tau)$ (in the sense of Definition \ref{def_linear}). Moreover, we have
\begin{align} \label{eq1092122}
\|u\|_{\frH_{p,\theta,\alpha}^{\gamma}(D,\tau)}\leq N \left(\|u_0\|_{U_{p,\theta}^{\gamma}(D)} + \|\psi^{\alpha/2} f(0)\|_{\bH_{p,\theta}^{\gamma-\alpha}(D,\tau)} + \|g(0)\|_{\bH_{p,\theta}^{\gamma-\alpha/2}(D,\tau,l_2)}\right),
\end{align}
where $N=N(d,p,\theta,\gamma,\alpha,\Lambda_0,\Lambda_1,\sigma,D,T)$.

$(ii)$  If $L_t=-(-\Delta)^{\alpha/2}$, then the assertions in $(i)$ hold for any $\gamma\in\bR$ with the constant $N$ independent of $\Lambda_0$ and $\Lambda_1$.
\end{thm}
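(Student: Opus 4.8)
The plan is to reduce everything to an a priori estimate for the associated \emph{linear} problem and then bootstrap to the semi-linear case. Concretely, I would first fix free terms $f\in\psi^{-\alpha/2}\bH_{p,\theta}^{\gamma-\alpha}(D,\tau)$ and $g\in\bH_{p,\theta}^{\gamma-\alpha/2}(D,\tau,l_2)$, consider the linear equation $du=(L_tu+f)\,dt+\sum_k g^k\,dw^k_t$ with $u(0,\cdot)=u_0$ and zero exterior data, and prove that any solution $u\in\frH_{p,\theta,\alpha}^\gamma(D,\tau)$ obeys
\[
\|\psi^{-\alpha/2}u\|_{\bH_{p,\theta}^\gamma(D,\tau)}\le N\Big(\|u_0\|_{U_{p,\theta}^\gamma(D)}+\|\psi^{\alpha/2}f\|_{\bH_{p,\theta}^{\gamma-\alpha}(D,\tau)}+\|g\|_{\bH_{p,\theta}^{\gamma-\alpha/2}(D,\tau,l_2)}\Big).
\]
Together with Lemma \ref{lem1072135}, which guarantees $\psi^{\alpha/2}L_tu\in\bH_{p,\theta}^{\gamma-\alpha}(D,\tau)$, this yields the membership $u\in\frH_{p,\theta,\alpha}^\gamma(D,\tau)$ and the estimate \eqref{eq1092122} for the linear model, and uniqueness follows at once by applying it to the difference of two solutions (zero data, zero initial value).

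For the a priori estimate I would follow Krylov's kernel-free scheme of \cite{KL99}, splitting it into a higher-order part and a zeroth-order part. For the higher-order part, localize the solution with the partition of unity $\{\zeta_n\}$ from \eqref{zeta1}--\eqref{zeta3}: each $\zeta_{-n}u$, after the dyadic rescaling $x\mapsto e^nx$ built into the definition of $H_{p,\theta}^\gamma(D)$, solves a unit-scale equation on $\bR^d$ to which the whole-space maximal regularity result of \cite{H21} applies. The price is the commutator term $L_t(\zeta_{-n}u)-\zeta_{-n}L_tu$, which must be estimated by a lower-order norm of $u$; summing the resulting bounds against the weights $e^{n\theta}$ yields
\[
\|\psi^{-\alpha/2}u\|_{\bH_{p,\theta}^\gamma(D,\tau)}\le N\big(\|\psi^{-\alpha/2}u\|_{\bL_{p,\theta}(D,\tau)}+\|\mathrm{data}\|\big).
\]
It then remains to bound the zeroth-order ($\gamma=0$) norm, which I would do by applying It\^o's formula to a weighted $L_p$ functional of $\psi^{-\alpha/2}u$ and using the nonlocal integration-by-parts formula of \cite{DR24} to extract the dissipativity of the term $(L_tu,|\psi^{-\alpha/2}u|^{p-2}\psi^{-\alpha/2}u)$; the martingale part is controlled by Burkholder--Davis--Gundy. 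The range $\gamma\in[0,\alpha]$ enters precisely because Lemma \ref{lem1072135}$(i)$ only guarantees boundedness of $L_t$ there, while the intermediate values follow by complex interpolation.

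Granting the a priori estimate, I would obtain solvability as follows. First, part $(ii)$ is the base case: for $L_t=-(-\Delta)^{\alpha/2}$ Lemma \ref{lem1072135}$(ii)$ removes the restriction on $\gamma$, so the localization argument reduces to the whole-space fractional-Laplacian theory, which is available for all $\gamma\in\bR$; solvability there is classical (cf.\ \cite{CKR23,H21}). Then part $(i)$ follows by the method of continuity along the family $L_t^{(\lambda)}:=(1-\lambda)\big(-(-\Delta)^{\alpha/2}\big)+\lambda L_t$, $\lambda\in[0,1]$. Each $L_t^{(\lambda)}$ again satisfies Assumption \ref{ass_nu} with constants uniform in $\lambda$ (the lower bound $\nu^{(1)}$ and the upper bound $\Lambda_1$ survive convex combination with the fractional-Laplacian measure), so the a priori estimate holds uniformly along the path, and the usual open--closed argument propagates solvability from $\lambda=0$ to $\lambda=1$.

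Finally, for the semi-linear equation I would set up a contraction. Define $R(v):=u$, where $u$ solves the linear equation with free terms $f(v),g(v)$ and initial value $u_0$; the linear theory makes $R$ well-defined on $\frH_{p,\theta,\alpha}^\gamma(D,\tau_0)$. Applying the a priori estimate to $R(v_1)-R(v_2)$, which has zero initial data and free terms $f(v_1)-f(v_2),\,g(v_1)-g(v_2)$, and invoking Assumption \ref{ass_semi}$(ii)$ — after rewriting $\|w\|_{H_{p,\theta-\alpha p/2}^\gamma(D)}\approx\|\psi^{-\alpha/2}w\|_{H_{p,\theta}^\gamma(D)}$ via Lemma \ref{lem_prop}$(ii)$ — the $\varepsilon$-term is absorbed into the left-hand side for $\varepsilon$ small, while the lower-order term $\|v_1-v_2\|_{H_{p,\theta}^{\gamma-\alpha/2}(D)}$ is made small on a short interval using the time-continuity estimate of Proposition \ref{prop holder}. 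This produces a contraction, hence a unique fixed point on a small interval, and patching over $[0,T]$ delivers the solution and \eqref{eq1092122}. The main obstacle is the higher-order a priori estimate, and specifically the commutator bound: unlike the local case, where $[\Delta,\zeta_n]$ is a genuine lower-order differential operator, the nonlocal expression $L_t(\zeta_{-n}\cdot)-\zeta_{-n}L_t(\cdot)$ sees all of $\bR^d$, so controlling how the boundary weight $\psi^{\alpha/2}$ interacts with the tails of $\nu_t$ near $\partial D$ — uniformly over the merely measurable-in-$(\omega,t)$ family — is the delicate point on which the whole scheme rests.
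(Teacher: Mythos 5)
Your overall architecture (localization plus commutator for the higher-order part, It\^o's formula plus the nonlocal integration by parts of \cite{DR24} for the zeroth-order part, method of continuity in $L_t$, and a fixed-point argument for the semi-linear terms) does match the paper's Section \ref{proof semilinear} in outline, and your semi-linear step (contraction on short intervals versus the paper's Picard iteration with a Gronwall-type summation, plus the $1_{t\leq\tau}$ extension for stopping times) is an inessential variation. But there is a genuine gap at the core: your two-step a priori scheme only closes when the free terms are function-valued, and therefore cannot reach the claimed range $\gamma\in[0,\alpha]$, let alone all $\gamma\in\bR$ in part $(ii)$. The It\^o-formula zeroth-order estimate requires the pointwise expressions $|u|^{p-2}uf$ and $|u|^{p-2}u\,g^k$, $|u|^{p-2}|g|^2_{l_2}$ to make sense, which fails when $f\in\psi^{-\alpha/2}\bH^{\gamma-\alpha}_{p,\theta}(D,\tau)$ and $g\in\bH^{\gamma-\alpha/2}_{p,\theta}(D,\tau,l_2)$ are genuine distributions --- i.e.\ precisely when $\gamma<\alpha$ (for $f$) and $\gamma<\alpha/2$ (for $g$). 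In the paper this It\^o argument appears only as Lemma \ref{lem1092139}, at the level $\gamma=\alpha$ with $f\in\psi^{-\alpha/2}\bL_{p,\theta}$ and $g\in\bH^{\alpha/2}_{p,\theta}$; your plan to ``bound the zeroth-order norm by It\^o'' for data at level $\gamma=0$ is a step that would fail outright.

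What the paper does instead, and what your proposal is missing, is twofold. For the deterministic component (Lemma \ref{lem1072321}), the case $\gamma=\alpha$ is reduced pathwise to the weighted theory of \cite[Theorem 2.7]{DR24}, the case $\gamma=0$ is obtained by a \emph{duality} argument against a backward equation (not by It\^o), and $\gamma\in(0,\alpha)$ follows by complex interpolation; this is also where the convexity dichotomy in the admissible range of $\theta$ actually enters. For the stochastic data at low regularity, Step 1 of the proof of Theorem \ref{thm_white} uses the lifting decomposition of \cite[Lemma A.5]{S23}: one writes $u_0=u_0^0+\sum_{i}D_i(\psi u_0^i)$ and $g=g^0+\sum_i D_i(\psi g^i)$ with data one order smoother, solves at level $\gamma+1\geq\alpha$ where Lemma \ref{lem1092139} applies, and corrects by a deterministic equation with free term $h\in\psi^{-\alpha/2}\bH^{\gamma-\alpha}_{p,\theta}(D,\tau)$, iterating to descend step by step (this is also how arbitrary $\gamma\in\bR$ is reached in part $(ii)$); uniqueness at low $\gamma$ then comes from the deterministic uniqueness of Lemma \ref{lem1072321}, not from an a priori estimate for the stochastic equation. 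Relatedly, your assertion that solvability for $L_t=-(-\Delta)^{\alpha/2}$ in the weighted domain spaces ``is classical (cf.\ \cite{CKR23,H21})'' is not justified: \cite{H21} is a whole-space result and \cite{CKR23} is deterministic, and constructing the solution on $D$ (the semigroup representation in Lemma \ref{lem1072321}, the subtraction $u=v+G$ in Lemma \ref{lem1082302}) is part of what must be proved here. Finally, the commutator bound you flag as the main obstacle is in fact imported ready-made from \cite[Lemma 4.3]{CKR23}; the real crux is the negative-order data just described.
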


\begin{remark} \label{rem4061023}
(i) The condition $\theta\in (d-1,d-1+p)$ is necessary even for the corresponding linear deterministic equation (see \cite[Remark 2.5]{CKR23}).  In contrast,  the stronger assumption $\theta\in(d-\alpha/2,d-\alpha/2+\alpha p/2)$, which applies to $C^{1,\sigma}$ open set, is of a technical nature and is used specifically for handling the deterministic case.

(ii)
Theorem \ref{thm_white} extends the results of  \cite[Theorems 2.7 and 2.8]{DR24}, which treat deterministic equations. 
    In particular, by setting $g(u)=0$, we obtain the regularity results for  deterministic parabolic equations when forcing term $f$ lies in a negative-order Sobolev space. The corresponding elliptic result follows similarly by adapting the proof of \cite[Theorem 2.8]{DR24}.

(iii) The space-time H\"older regularity of solution $u$ can be established using Lemma \ref{lem_prop} $(iv)$ and Proposition \ref{prop holder}.
\end{remark}

The maximum principle is a fundamental tool in the analysis of elliptic and parabolic partial differential equations. Similarly, it plays a significant role in the theory of stochastic partial differential equations (SPDEs). In particular, Krylov investigated the maximum principle and its applications to second-order SPDEs in \cite{K07}. Below, we present a version of the maximum principle for  nonlocal equation \eqref{linear}. The proof is placed in Appendix \ref{appA}.

\begin{theorem}[Maximum principle] \label{thm_max}
Let $u_0\in U_{2,d}^{\alpha/2}(D)$, and $f\in \psi^{-\alpha/2}\bL_{2,d}(D,\tau)$ and $g\in \bL_{2,d}(D,\tau,l_2)$ be independent of $u$.
Let $u \in \frH_{2,d,\alpha}^{\alpha/2}(D,\tau)$ be a solution to \eqref{linear}. Assume that for any $\omega\in \Omega$, $u_0\geq0$, and
\begin{equation} \label{eq3302353}
   1_{u(t,\cdot)<0}f\geq0, \quad 1_{u(t,\cdot)<0}g^k = 0
\end{equation}
$t$-almost everywhere on $(0,\tau)$. Then we have $u(t,\cdot)\geq0$ for any $t\in(0,\tau)$ (a.s.).
\end{theorem}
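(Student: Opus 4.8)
The plan is to show that the negative part $u^-:=\max(-u,0)$ vanishes. Since $u\in\frH_{2,d,\alpha}^{\alpha/2}(D,\tau)$ and $\theta=d$, Proposition \ref{prop holder} $(ii)$ gives $u\in C([0,\tau];L_2(D))$ (a.s.), because $H_{2,d}^{0}(D)=L_{2,d}(D)=L_2(D)$. Hence it suffices to prove $\int_D (u^-(t,x))^2\,dx=0$ for each fixed $t$ (a.s.); continuity in $t$ then upgrades this to all $t\in(0,\tau)$ simultaneously, and since $u=0$ on $D^c$ we conclude $u\geq0$ on $\bR^d$. To this end I would fix a convex function $\phi\in C^2(\bR)$ approximating $r\mapsto (r^-)^2$, chosen so that $\phi,\phi',\phi''$ are supported in $\{r<0\}$ (e.g. $\phi'\leq0$ non-decreasing with $\phi'\equiv0$ on $[0,\infty)$, $\phi'(r)=2r$ for $r\leq-\ep$, so that $0\le\phi\le (\cdot^-)^2$), and apply It\^o's formula to the functional $t\mapsto\int_D\phi(u(t,x))\,dx$, using $du=(L_tu+f)\,dt+\sum_k g^k\,dw^k_t$.

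It\^o's formula produces
\begin{align*}
\int_D \phi(u(t))\,dx
&= \int_D \phi(u_0)\,dx
+ \int_0^t\!\!\int_D \phi'(u)\,(L_su+f)\,dx\,ds
\\
&\quad + \tfrac12\int_0^t\!\!\int_D \phi''(u)\sum_k (g^k)^2\,dx\,ds
+ \sum_k\int_0^t\!\!\int_D \phi'(u)\,g^k\,dx\,dw^k_s .
\end{align*}
I then check the sign of each term. The initial term vanishes since $u_0\geq0$ forces $\phi(u_0)=0$. For the nonlocal part, the version of nonlocal integration by parts in \cite{DR24}, combined with the convexity of $\phi$ and the symmetry of $\nu_s$, yields
$$
\int_D \phi'(u)\,L_su\,dx=-\tfrac12\iint\big(u(x+y)-u(x)\big)\big(\phi'(u(x+y))-\phi'(u(x))\big)\,\nu_s(dy)\,dx\le0,
$$
because $\phi'$ is non-decreasing; here the zero exterior condition lets me extend $u$ by $0$ and pass to an integral over $\bR^d$ with no boundary contribution, as $\phi'(0)=0$. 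Since $\phi'$ and $\phi''$ are supported in $\{u<0\}$, the hypotheses $1_{u<0}f\geq0$ and $1_{u<0}g^k=0$ give $\int_D\phi'(u)f\,dx\leq0$, annihilate the quadratic-variation term $\tfrac12\int_D\phi''(u)\sum_k(g^k)^2\,dx=0$, and make every stochastic integrand $\int_D\phi'(u)g^k\,dx=0$, so the martingale part is identically zero. Consequently $\int_D\phi(u(t))\,dx\leq0$, and letting $\ep\downarrow0$, dominated convergence (using $0\le\phi\le(\cdot^-)^2$ and $u(t)\in L_2(D)$) gives $\int_D(u^-(t))^2\,dx=0$.

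The main obstacle is the rigorous justification of It\^o's formula at this low regularity: the drift $L_su+f$ lies only in the negative-order space $\psi^{-\alpha/2}\bH_{2,d}^{-\alpha/2}(D,\tau)$, so the pairing $\int_D\phi'(u)(L_su+f)\,dx$ must be read as the duality pairing of Lemma \ref{lem_prop} $(iii)$ between $\phi'(u)\in\psi^{\alpha/2}H_{2,d}^{\alpha/2}(D)$ and $L_su+f\in\psi^{-\alpha/2}H_{2,d}^{-\alpha/2}(D)$ (one checks via Lemma \ref{lem_prop} $(ii)$--$(iii)$ that these are indeed dual spaces). This requires that composition with the Lipschitz map $\phi'$, which fixes $0$, preserve the weighted space $\psi^{\alpha/2}H_{2,d}^{\alpha/2}(D)$. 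I would handle this by first approximating $u$ by the smooth, compactly-supported-in-$x$ functions supplied by Proposition \ref{lem2072238} $(ii)$ and Remark \ref{rem2081915}, for which the displayed It\^o formula is classical, and then passing to the limit: the nonlocal term converges by the boundedness of $L_s$ in Lemma \ref{lem1072135} together with continuity of $v\mapsto\phi'(v)$ on $\psi^{\alpha/2}\bH_{2,d}^{\alpha/2}(D,\tau)$, while the remaining terms converge using $u\in C([0,\tau];L_2(D))$ and dominated convergence. Verifying this stability of the weighted spaces under the nonlinearity $\phi'$, and the convergence of the approximants' drifts, is the delicate step; once it is in place the sign argument and the limit $\ep\downarrow0$ close the proof.
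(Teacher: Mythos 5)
Your proposal is correct and follows essentially the same route as the paper's proof in Appendix \ref{appA}: It\^o's formula applied to a $C^2$ approximation of $z\mapsto |z^{(-)}|^2$, the symmetric-form identity \eqref{eq3302351} together with monotonicity of the derivative to make the nonlocal term nonpositive, approximation through the dense class of Proposition \ref{lem2072238} justified by the boundedness of $v\mapsto v^{(-)}$ on the weighted space (exactly the composition-stability step you flag as delicate), and time-continuity from Proposition \ref{prop holder} to conclude. The only cosmetic difference is that the paper first takes expectations and proves the sign-free energy inequality \eqref{eq3310116} before inserting the hypotheses \eqref{eq3302353}, whereas you build the sign conditions into the support of $\phi'$ and $\phi''$ to kill the martingale part pathwise; both variants are sound.
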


\begin{remark}
    In the above theorem, $u(t,\cdot)\geq0$ is understood in the sense of \eqref{eq6292218}. Thus, one can deduce that $u(t,x)\geq 0$ for almost every $x\in D$, for any $t\in(0,\tau)$ (a.s.).
\end{remark}

\vspace{1mm}

\subsection{SPDE driven by  spatially homogeneous noise}

We consider the semi-linear SPDEs driven by a spatially homogeneous noise;
 \begin{equation} \label{eq_colored}
\begin{cases}
du=(L_tu + f(u))dt + \xi h(u)\dot{W}, \quad &(t,x)\in(0,\tau)\times D,
\\
u(0,x)=u_0,\quad & x\in D,
\\
u(t,x)=0,\quad &(t,x)\in[0,\tau]\times D^c,
\end{cases}
\end{equation}
where $\xi=\xi(\omega,t,x)$ is a real-valued function and $W$ is a spatially homogeneous noise. In particular, $W$ can be a spatially white or colored noise. The functions $f$ and $h$ are semi-linear real-valued functions.

For a real-valued function $u\in \frH_{p,\theta,\alpha}^{\gamma}(D,\tau)$, we write
$$
du=(L_tu + f(u))dt + \xi h(u)\dot{W}, \quad t\leq \tau; \quad u(0,\cdot)=u_0
$$
if for any  $\phi \in C_c^\infty(D)$ the equality
$$
    (u(t,\cdot),\phi)_D=(u(0,\cdot),\phi)_D+ \int_0^t (L_s u,\phi)_D ds + \int^t_0\int_{\bR^d} \xi h(u)\phi \, W (dsdy)
$$
holds for all $t\leq \tau$ (a.s.). 
Note that, by \eqref{eq4011722}, we have
\begin{equation*}
    \langle g(s,\cdot), e_k \rangle_{\cH} = (\Pi, g(s,\cdot)*\overline{e}_k)_{\bR^d} = (\Pi*e_k, g(s,\cdot))_{\bR^d},
\end{equation*}
where $(\Pi*e_k)(x)=(\Pi,e_k(x-\cdot))_{\bR^d}$. Using this identity along with 
 \eqref{eq3232128} naturally leads to the following definition.

\begin{definition} \label{def_solW}
Let $\gamma \geq 0$, $\{e_k\}$ be a complete orthonormal system of $\cH$, and $w^k_t:=\int^t_0\int_{\bR^d} e_k(y)W(dyds)$.  We say that $u$ is a solution to equation \eqref{eq_colored} in the space $\frH^{\gamma}_{p,\theta,\alpha}(D,\tau)$ if it satisfies equation \eqref{linear}  with $g^k(u)=\xi h(u)(\Pi*e_k)$ in the sense of Definition \ref{def_linear}.
\end{definition}

We impose the following reinforced Dalang’s condition on $\Pi(dx)$.

\begin{assumption}[$s,\gamma$] \label{ass_D}

For $s\in(1,\infty]$ and $\gamma\in(0,\alpha/2)$,

\begin{equation} \label{eq3071615}
    \begin{cases}
    \int_{|x|<1} |x|^{\alpha-2\gamma-d/s-d} \, \Pi(dx) < \infty \, &\text{if } \alpha/2-\gamma-d/2s \in (0,d/2),
    \\
    \int_{|x|< 1}\log(1/|x|)\, \Pi(dx)<\infty \, &\text{if }  \alpha/2-\gamma-d/2s = d/2\\
\int_{|x|<1} \Pi(dx)<\infty \, &\text{if }  \alpha/2-\gamma-d/2s > d/2.
\end{cases}
\end{equation}
\end{assumption}

\begin{remark} \label{rem5111836}
Note that if \eqref{eq3101709} holds, then for any $c>0$, we have $\int_{|x|<c}\Pi(dx)<\infty$. Therefore, 
  no additional restrictions are imposed on $\Pi(dx)$ when  $\alpha/2-\gamma-d/2s > d/2$. In other words, in this case, we are free to consider arbitrary nonnegative and nonnegative definite tempered measures.
\end{remark}

Next we  impose our assumption on $f$ and $h$.
Recall that  $f$ and $h$ depend on $(\omega,t,x,u)$, and $\xi$ depends on $(\omega,t,x)$.

\begin{assumption}[$s,\gamma$] \label{ass_h}
$(i)$ The functions $f(t,x,u)$ and $h(t,x,u)$ are real-valued $\cP\times\cB(\bR^{d+1})$-measurable functions. 

$(ii)$ For any $\omega,t,x,u$, and $v$,
\begin{equation} \label{eq5111852}
    |f(t,x,u)-f(t,x,v)|\leq N|u-v|, \quad |h(t,x,u)-h(t,x,v)|\leq N|u-v|,
\end{equation}
where $N>0$ is a constant.

$(iii)$ The function $\xi(t,x)$ is a real-valued $\cP\times\cB(\bR^{d})$-measurable function, and there exists a constant $\theta_0<(2s\gamma+d)\vee s(\alpha-d)$  such that
\begin{equation}
    \sup_{\omega \in\Omega, t\leq\tau} \|\xi(t,\cdot)\|_{L_{2s,\theta_0}(D)}<\infty, \label{eqn theta_0}
\end{equation}
where $\|\xi(t,\cdot)\|_{L_{\infty,\theta_0}(D)}:= \sup_{x\in D} |\xi(t,x)|$.
\end{assumption}

\begin{remark}
    While the case $s=\infty$ is more straightforward and easier  to handle, the case $s<\infty$ is essential for treating   super-linear equations, as discussed in Section \ref{sec_super}.  For remarks on the admissible range of $\theta_0$, see Remark \ref{rem theta_0}.
\end{remark}

Here is the main result in this subsection. The proof  is provided in Section \ref{sec_colored}.

\begin{thm} \label{mainthm}
Let $\alpha\in(0,2), \sigma\in(0,1), \gamma\in (0,\alpha/2)$, $s\in(1,\infty]$, $p\in[2s/(s-1),\infty)$, and $T\in(0,\infty)$. Let  $\tau\leq T$ be a bounded stopping time. 
Assume that $\theta\in(d-1,d-1+p)$ if $D$ is a bounded $C^{1,\sigma}$ convex domain, and $\theta\in(d-\alpha/2,d-\alpha/2+\alpha p/2)$ if $D$ is a bounded $C^{1,\sigma}$ open set. 
Suppose that Assumptions \ref{ass_nu}, \ref{ass_D} $(s,\gamma)$, and \ref{ass_h} $(s,\gamma)$ hold.
Let $u_0\in U_{p,\theta}^{\gamma}(D)$, $f(0)\in \psi^{-\alpha/2}\bH_{p,\theta}^{\gamma-\alpha}(D,\tau)$, and $h(0)\in \bL_{p,\theta-\theta_0p/2s+\overline{\theta}p}(D,\tau)$, where
\begin{equation*}
    \overline{\theta}:=(-\alpha/2+\gamma+d/2s)\vee(-d/2).
\end{equation*}
Then there is a unique solution $u$ to \eqref{eq_colored} in  $\frH_{p,\theta,\alpha}^{\gamma}(D,\tau)$. Moreover,  we have
\begin{align*}
\|u\|_{\frH_{p,\theta,\alpha}^{\gamma}(D,\tau)} \leq N \left(\|u_0\|_{U_{p,\theta}^{\gamma}(D)} + \|\psi^{\alpha/2} f(0)\|_{\bH_{p,\theta}^{\gamma-\alpha}(D,\tau)} + \|h(0)\|_{\bL_{p,\theta-\theta_0p/2s+\overline{\theta}p}}\right),
\end{align*}
where $N=N(d,p,\theta,s,\theta_0,\gamma,\alpha,\Lambda_0,\Lambda_1,\sigma,D,T)$.
\end{thm}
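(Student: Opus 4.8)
The plan is to deduce Theorem \ref{mainthm} from the Wiener-driven result Theorem \ref{thm_white} by means of the series representation of the noise. By Definition \ref{def_solW}, a function $u$ solves the colored-noise equation \eqref{eq_colored} in $\frH_{p,\theta,\alpha}^{\gamma}(D,\tau)$ if and only if it solves the semi-linear equation \eqref{linear} with the same $f$ and with $g^k(u)=\xi h(u)(\Pi*e_k)$, where $\{e_k\}$ is a fixed complete orthonormal system of $\cH$ and $w^k_t=\int_0^t\int_{\bR^d}e_k(y)W(dy\,ds)$. Consequently the entire task reduces to verifying that this $(f,g)$ fulfills Assumption \ref{ass_semi} for the given parameter range; once this is done, Theorem \ref{thm_white}$(i)$ supplies existence, uniqueness, and the a priori bound \eqref{eq1092122}, and it only remains to convert the term $\|g(0)\|_{\bH_{p,\theta}^{\gamma-\alpha/2}(D,\tau,l_2)}$ appearing there into $\|h(0)\|_{\bL_{p,\theta-\theta_0p/2s+\overline{\theta}p}}$.

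The drift $f$ causes no difficulty: the pointwise Lipschitz bound \eqref{eq5111852} together with the mapping properties of Lemma \ref{lem_prop} immediately yields the $f$-part of \eqref{eq106121}, and $f(0)$ is assumed to lie in the required space. The heart of the proof is the stochastic term, for which I would isolate one key estimate: for every sufficiently integrable $w$,
\begin{equation*}
\big\| \big( \xi\, w\, (\Pi * e_k) \big)_{k\in\bN} \big\|_{H_{p,\theta}^{\gamma-\alpha/2}(D,l_2)} \le N\, \|\xi\|_{L_{2s,\theta_0}(D)}\, \|w\|_{L_{p,\theta-\theta_0 p/2s + \overline{\theta} p}(D)}.
\end{equation*}
Applying this with $w=h(u)-h(v)$, which by \eqref{eq5111852} satisfies $|w|\le N|u-v|$ pointwise, bounds $\|g(u)-g(v)\|_{H_{p,\theta}^{\gamma-\alpha/2}(D,l_2)}$ by a weighted $L_p$-norm of $u-v$. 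Since $\gamma-\alpha/2<0<\gamma$, this zeroth-order norm can be absorbed into the right-hand side of \eqref{eq106121}: adjusting the weight by the monotonicity \eqref{eq5111835} (valid for bounded $D$), interpolating $L_p$ between $H_{p,\theta}^{\gamma-\alpha/2}(D)$ and $H_{p,\theta-\alpha p/2}^{\gamma}(D)$ via Lemma \ref{lem_prop}$(v)$, and then invoking Young's inequality produces exactly the $\varepsilon/N_\varepsilon$ splitting. Taking $w=h(0)$ instead furnishes the free-term bound and hence the data term of the final estimate.

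To prove the key estimate I would first treat the translation-invariant model on $\bR^d$ and then localize. On the whole space, expanding in $\{e_k\}$ and using Parseval's identity in $\cH$ together with the defining formula \eqref{eq4011722}, the sum over $k$ collapses to a single $\cH$-norm, which by Bochner's theorem is a frequency integral $\int_{\bR^d}|\cF_d(\cdot)(\eta)|^2\,M(d\eta)$ against the spectral measure $M=\cF_d(\Pi)$. In this form the negative-order smoothing by $(1-\Delta)^{(\gamma-\alpha/2)/2}$ contributes a factor $(1+|\eta|^2)^{\gamma-\alpha/2}$, and the estimate closes precisely when $\int_{\bR^d}(1+|\eta|^2)^{\gamma-\alpha/2}\,M(d\eta)<\infty$, i.e. Dalang's condition with exponent $\alpha/2-\gamma$. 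The local integrability of $\Pi$ at the origin recorded in \eqref{eq3071615} implies this spectral integrability by Fourier duality, while the shift by $d/s$ and the exponent $2s$ on $\xi$ arise from splitting off $\xi\in L_{2s}$ by H\"older's inequality, which is what forces the restriction $p\ge 2s/(s-1)$ and dictates the two regimes encoded in $\overline{\theta}$. The passage from $p=2$ to general $p$ uses $l_2$-valued Littlewood--Paley / $H^\infty$-type machinery for $(1-\Delta)^{(\gamma-\alpha/2)/2}$, and the transfer to the open set $D$ is carried out through the partition $\{\zeta_n\}$ and the scaling definition \eqref{def. Hptheta}, the distance weight being absorbed by the H\"older pairing with $\xi\in L_{2s,\theta_0}(D)$; this is where the weight $\theta-\theta_0p/2s+\overline{\theta}p$ is generated.

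The main obstacle is exactly this key estimate. Identifying that the sharp threshold for the $l_2$-valued negative-order bound is governed by the spectral integral $\int(1+|\eta|^2)^{\gamma-\alpha/2}M(d\eta)$ — equivalently, by the origin behavior of $\Pi$ in \eqref{eq3071615} — and carrying the correct weight through the localization so that the free term lands in $\bL_{p,\theta-\theta_0p/2s+\overline{\theta}p}$, is the only genuinely new analytic input; everything else is a direct appeal to Theorem \ref{thm_white} and the embedding and interpolation properties in Lemma \ref{lem_prop}. I expect the whole-space form of the estimate to be established separately (in the spirit of the function-space computations deferred to the appendix) and then imported into the localization argument.
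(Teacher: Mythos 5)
Your reduction coincides with the paper's: Definition \ref{def_solW} turns \eqref{eq_colored} into \eqref{linear} with $g^k(u)=\xi h(u)(\Pi*e_k)$, one verifies Assumption \ref{ass_semi} and invokes Theorem \ref{thm_white}, and your key estimate is exactly the content of the paper's Lemma \ref{lem3291751} together with the free-term bound $\|g(0)\|_{H_{p,\theta}^{\gamma-\alpha/2}(D,l_2)}\le N\|h(0)\|_{L_{p,\theta-\theta_0p/2s+\overline{\theta}p}(D)}$; the absorption via \eqref{eq5111835}, Lemma \ref{lem_prop}$(v)$ and Young also matches the paper. The genuine gap is in your proposed proof of the key estimate. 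For $s<\infty$ the spectral formulation does not close: after collapsing $\sum_k$ to a single $\cH$-norm one faces the physical-space double integral $\int\int |R(x-y)\xi(y)w(y)\,R(x-y+z)\xi(y-z)w(y-z)|\,dy\,\Pi(dz)$ with $R=R_{\alpha/2-\gamma}$, and the H\"older splitting of $\xi\in L_{2s}$ must be performed \emph{inside} this integral, which converts the kernel into $(R^{s/(s-1)}*R^{s/(s-1)})^{(s-1)/s}$. This is no longer an object to which Plancherel applies, and finiteness of $\int(1+|\eta|^2)^{\gamma-\alpha/2}\,\cF_d\Pi(d\eta)$ --- your ``closing'' condition, which corresponds only to the case $s=\infty$, since $R*R=cR_{\alpha-2\gamma}$ gives $\int (R*R)\,d\Pi=c\int(1+|\eta|^2)^{\gamma-\alpha/2}\cF_d\Pi(d\eta)$ --- is strictly weaker than what is needed. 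The reinforced condition \eqref{eq3071615} enters the actual argument not ``by Fourier duality'' but through the sharp pointwise bound $(R^{s/(s-1)}*R^{s/(s-1)})^{(s-1)/s}\le N H_{s,\gamma}e^{-c|\cdot|}$ of \eqref{eq4032308}, imported from \cite{CHP24}, and, crucially, through its scaling behavior under the dilations $e^{-n}$ built into the norm \eqref{def. Hptheta} --- the paper's estimate \eqref{eq3151438} --- which is precisely what generates $\overline{\theta}$, the two regimes, and the final weight $\theta-\theta_0p/2s+\overline{\theta}p$. Your sketch acknowledges the localization but never produces this scaling input, which is the heart of the lemma.

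A second, related defect: your passage from $p=2$ to general $p$ via ``$l_2$-valued Littlewood--Paley / $H^\infty$-type machinery'' is not available here. The $l_2$-valued function $(\xi w(\Pi*e_k))_k$ lies in no reasonable space before smoothing, since $\sum_k|(\Pi*e_k)(x)|^2$ is formally $\|\Pi(x-\cdot)\|_{\cH}^2$ and diverges for singular $\Pi$ (e.g.\ $\Pi=\delta_0$); the operator $(1-\Delta)^{(\gamma-\alpha/2)/2}$ must act before the summation in $k$, so no vector-valued multiplier theorem decouples the two. What the paper uses instead is the pointwise kernel bound above combined with Minkowski's integral inequality to pull the $L_{p/2r}$-norm in $x$, $r=s/(s-1)$, inside the $dy\,\Pi(dz)$-integrals (see \eqref{eq3141507}); it is this Minkowski step, not the H\"older splitting, that forces $p\ge 2s/(s-1)$. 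In short: your hypotheses and the statement of the key estimate are correct, but the proof you outline for it would only go through when $s=\infty$ and $p=2$; the cases that make the theorem nontrivial require the physical-space kernel analysis of Lemma \ref{lem3291751}.
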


\begin{remark}
If $\sup_{t,\omega}\sup_x |\xi(t,x)|<\infty$, then \eqref{eqn theta_0} is valid  for all $\theta_0\in \bR$.  Moreover, the  condition $h(0)\in \bL_{p,\theta-\theta_0p/2s+\overline{\theta}p}(D,\tau)$ with $s=\infty$ becomes 
$$
h(0)\in \bL_{p,\theta+\overline{\theta}p}(D,\tau).
$$
\end{remark}

\begin{remark} \label{rem4121114}
    For given $\alpha, s$, and $\Pi$,   Assumption \ref{ass_D} $(s,\gamma)$ imposes restriction on the range of $\gamma$. Below, we examine the admissible range of $\gamma$ for each type of $\Pi$  considered in  Example \ref{ex_noise}.

    $(i)$ Space-time white noise:\\
 When  $\Pi(dx)=\delta_0(dx)$,  $W$ corresponds to  space-time white noise.  In this case,  the integrals in \eqref{eq3071615} diverge whenever $\alpha/2-\gamma-d/2s\in(0,d/2]$. Therefore, as noted in  Remark \ref{rem5111836},  the condition $\alpha/2-\gamma-d/2s>d/2$ is necessary for Theorem \ref{mainthm} to hold.  Since $\alpha\in(0,2)$ and $\gamma\in(0,\alpha/2)$,  satisfying this inequality requires $\alpha>1$ and $d=1$.  Under these conditions, the range of $\gamma$ becomes
    \begin{equation}
\label{theta_0}
        \gamma <\alpha/2-1/2-1/2s.
    \end{equation}
    In particular, when $s=\infty$, this reduces to $\gamma<\alpha/2-1/2$, which matches the condition in \cite[Theorem 3.7]{H21} for equations posed on the whole space $D=\bR$;  see also \cite[Theorem 4.5]{KK12}).

    $(ii)$ Riesz type kernel:\\
 Let $\Pi(dx):=|x|^{-\beta}dx$. In this setting,   Assumption \ref{ass_D} $(s,\gamma)$ holds if
    \begin{equation*}
        \gamma<\alpha/2-\beta/2-d/2s.
    \end{equation*}
   Unlike the space-time white noise case,  this condition allows for arbitrary spatial dimensions $d\geq1$.

    $(iii)$ Bounded continuous functions:\\
Let $\Pi(dx):=f(x)dx$, where $f$ is a nonnegative bounded continuous function satisfying nonnegative definite condition. In this case, the  admissible range for $\gamma$ is 
    \begin{equation*}
        \gamma<\alpha/2-d/s.
    \end{equation*}
 In particular,  when $\Pi(dx)=dx$ and $s=\infty$, Example \ref{ex_noise} $(iii)$ shows that equation \eqref{eq_colored} reduces to one    driven by a single Wiener process, which is a special case of \eqref{linear}.  By formally taking $\gamma\to\alpha/2$ in Theorem \ref{mainthm}, we  recover a result  corresponding to a special case of Theorem \ref{thm_white} with $\gamma=\alpha/2$.
\end{remark}

\begin{remark}
\label{rem theta_0}
We examine the admissible  range of  $\theta_0$  in Assumption \ref{ass_h} $(s,\gamma)$ $(iii)$,  which is $\theta_0<(2s\gamma+d)\vee s(\alpha-d)$.

$(i)$ When $s=\infty$:\\
 $\theta_0$ can be any real number; there is no restriction. 

    $(ii)$ When $W$ is a space-time white noise:\\
 As noted in  Remark \ref{rem4121114} $(i)$, this case requires $d=1$, and the condition becomes  $\theta_0<s(\alpha-1)$. This is because inequality \eqref{theta_0} is equivalent to $2s\gamma+1<s(\alpha-1)$.
    If we formally set $\alpha=2$, then this condition  matches that in \cite[Theorem 4.3]{H20}.
\end{remark}

\subsection{Equations with super-linear multiplicative noise term} \label{sec_super}

We consider the equation
\begin{equation} \label{eq4082244}
\begin{cases}
du=L_tu dt + \xi |u|^{1+\lambda} \dot{W}, \quad &(t,x)\in(0,\infty)\times D,
\\
u(0,x)=u_0,\quad & x\in D,
\\
u(t,x)=0,\quad &(t,x)\in[0,\infty)\times D^c,
\end{cases}
\end{equation}
where $\lambda\geq0$, $\xi=\xi(\omega,t,x)$ is a real-valued function, and $W$ is a spatially homogeneous noise on $\bR^{d+1}$ introduced in Section \ref{sec_defnoise}.
We say that $u\in \frH_{p,\theta,\alpha,loc}^\gamma(D,\infty)$ is a solution to \eqref{eq4082244} if, for any bounded stopping time $\tau$,  $u\in \frH_{p,\theta,\alpha}^\gamma(D,\tau)$ and  it satisfies equation \eqref{eq_colored} with $h(u)=|u|^{1+\lambda}$  on $[0,\tau]$ in the sense of Definition \ref{def_solW}.

\begin{assumption}[$\lambda,\gamma$] \label{ass_lambda}

For $\lambda\geq0$ and $\gamma\in(0,\alpha/2)$,

\begin{equation*}
    \begin{cases}
    \int_{|x|<1} |x|^{\alpha-2\gamma-2d\lambda-d} \, \Pi(dx) < \infty \, &\text{if } \alpha/2-\gamma-d\lambda \in (0,d/2),
    \\
    \int_{|x|< 1}\log(1/|x|)\, \Pi(dx)<\infty \, &\text{if }  \alpha/2-\gamma-d\lambda = d/2\\
  \int_{|x|< 1} \Pi(dx)<\infty \, &\text{if }  \alpha/2-\gamma-d\lambda > d/2.
\end{cases}
\end{equation*}
\end{assumption}

\begin{assumption}
\label{ass2}
 The function $\xi(t,x)$ is $\cP\times \cB(\bR^d)$-measurable and
\begin{equation*}
    \|\xi\|_{L_\infty(\Omega\times (0,\infty)\times D)}\leq N.
\end{equation*}
\end{assumption}

\begin{remark}
Note that Assumption \ref{ass_D} $(s,\gamma)$ is identical to  Assumption \ref{ass_lambda}$(\lambda,\gamma)$ when $2\lambda=1/s$.
\end{remark}
Here is the main result of this subsection.

\begin{theorem} \label{thm_super}
    Let $\lambda\in[0,1/2)$, $\gamma\in (0,\alpha/2)$, $\alpha\in(0,2), \sigma\in(0,1)$, 
    \begin{equation}
\label{p}
        p>\frac{d+\alpha}{\gamma} \wedge \frac{2}{1-2\lambda},
    \end{equation}
    and
    \begin{equation} \label{eq4081106}
        \theta < \left\{d+p-1-\frac{\alpha p}{2} + \left[\frac{\gamma p}{\lambda} \vee \left(\frac{(\alpha-d)p}{2\lambda} -dp\right)\right]\right\}\wedge \left(d+\frac{\alpha p}{2}-\gamma p\right).
    \end{equation}
    Suppose that Assumptions \ref{ass_nu}, \ref{ass_lambda} $(\lambda,\gamma)$, and \ref{ass2} are satisfied.
Assume that $\theta\in(d-1,d-1+p)$ if $D$ is a bounded $C^{1,\sigma}$ convex domain, and $\theta\in(d-\alpha/2,d-\alpha/2+\alpha p/2)$ if $D$ is a bounded $C^{1,\sigma}$ open set.
Then for any nonnegative $u_0\in U_{p,\theta}^{\gamma}(D)$, there is a unique solution $u$ to \eqref{eq4082244} such that $u\in \frH_{p,\theta,\alpha,loc}^{\gamma}(D,\infty)$. Moreover, $u\geq0$ for all $t\in(0,\tau)$ (a.s.).
\end{theorem}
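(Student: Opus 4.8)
The plan is to realize the strategy announced in the introduction: truncate the super-linear coefficient, solve the resulting semi-linear equations by Theorem \ref{mainthm}, and then prove that the approximating solutions do not blow up, so that their common limit solves \eqref{eq4082244}. For $m\in\bN$ put $h_m(u):=|0\vee u\wedge m|^{1+\lambda}$, which is bounded by $m^{1+\lambda}$, globally Lipschitz (with a constant depending on $m$), and vanishes on $\{u\le0\}$. Since $h_m$ is bounded and $\xi\in L_\infty$ by Assumption \ref{ass2}, and since Assumption \ref{ass_lambda}$(\lambda,\gamma)$ coincides with Assumption \ref{ass_D}$(s,\gamma)$ for $2\lambda=1/s$, the hypotheses of Theorem \ref{mainthm} are satisfied with $f\equiv0$ and $h=h_m$; in particular Assumption \ref{ass_h}$(s,\gamma)$ holds because $\xi$ is bounded on the bounded set $D$. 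Hence for every bounded stopping time $\tau$ there is a unique $u_m\in\frH_{p,\theta,\alpha}^{\gamma}(D,\tau)$ solving \eqref{eq_colored} with $h=h_m$ in the sense of Definition \ref{def_solW}.

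\emph{Nonnegativity and a consistent local solution.} Freezing $\xi h_m(u_m)$, the function $u_m$ solves the linear equation \eqref{linear} with $f\equiv0$ and $g^k=\xi h_m(u_m)(\Pi*e_k)$, both independent of the unknown. As $h_m(v)=0$ for $v\le0$ we have $1_{u_m<0}g^k=0$, while $u_0\ge0$ and $f\equiv0$; thus the maximum principle, Theorem \ref{thm_max} (applied after a routine verification of the integrability it requires), gives $u_m\ge0$ for all $t$ a.s., whence $h_m(u_m)=(u_m\wedge m)^{1+\lambda}$. Because $p>(d+\alpha)/\gamma$, Lemma \ref{lem_prop}$(iv)$ and Proposition \ref{prop holder} furnish a version of $u_m$ that is continuous in $x$, so the exit time $\gamma_m:=\inf\{t:\sup_x u_m(t,x)\ge m\}\wedge T$ is well defined. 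For $m<m'$ the two truncations agree on $\{0\le u_m<m\}$, so uniqueness in Theorem \ref{mainthm} yields $u_m=u_{m'}$ on $[0,\gamma_m]$; in particular $\gamma_m$ is nondecreasing. Setting $u:=u_m$ on $[0,\gamma_m)$ therefore defines, consistently, a local solution $u\in\frH_{p,\theta,\alpha,loc}^{\gamma}(D,\gamma_\infty)$ of \eqref{eq4082244} with $\gamma_\infty:=\lim_m\gamma_m$.

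\emph{Non-explosion.} The crux, and the main obstacle, is to show $\gamma_\infty=\infty$ a.s. A direct Gronwall estimate is unavailable because the noise coefficient grows like $|u|^{1+\lambda}$, so the argument must exploit $\lambda<1/2$ through a uniform-in-$m$ a priori bound. I would treat $\xi|u_m|^{1+\lambda}$ as a given stochastic source and apply the \emph{linear} maximal regularity of Theorem \ref{mainthm} (whose constant does not see the $m$-dependent Lipschitz constant), using the full strength of Assumption \ref{ass_lambda}$(\lambda,\gamma)$, i.e. $s=1/(2\lambda)$, to bound the stochastic term by $\||u_m|^{1+\lambda}\|$ in a suitable weighted $\bL_p$-space. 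Estimating this by the embedding of Lemma \ref{lem_prop}$(iv)$ leads to a self-referential inequality of the form $X\le N(\|u_0\|+\delta^{\varepsilon}X^{1+\lambda})$ for $X:=\|u_m\|_{\frH_{p,\theta,\alpha}^{\gamma}(D,\tau\wedge\gamma_m)}$ over a short interval of length $\delta$; the conditions \eqref{p} and \eqref{eq4081106} are calibrated precisely so that the power $1+\lambda$ is subcritical and the small-time factor $\delta^{\varepsilon}$ (coming from the time-H\"older gain in Proposition \ref{prop holder}) is available. A bootstrap on short intervals, iterated to cover $[0,T]$, then bounds $\|u_m\|_{\frH_{p,\theta,\alpha}^{\gamma}(D,T\wedge\gamma_m)}$ uniformly in $m$.

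\emph{Conclusion.} The uniform bound, fed back through the embedding of Lemma \ref{lem_prop}$(iv)$, controls $\sup_{t\le T\wedge\gamma_m}\sup_x u_m$ uniformly in $m$ (the solution vanishing like $\psi^{\alpha/2}$ at $\partial D$, so its supremum is attained in the interior). Hence for $m$ larger than this bound one has $\gamma_m>T$, and since $T$ is arbitrary, $\gamma_\infty=\infty$ a.s. Consequently $u$ is globally defined, $u\in\frH_{p,\theta,\alpha,loc}^{\gamma}(D,\infty)$ solves \eqref{eq4082244}, and $u\ge0$ is inherited from the nonnegativity of each $u_m$. Uniqueness follows from the uniqueness of each truncated problem together with the consistency relation, since any two solutions must agree up to every $\gamma_m$ and hence on all of $[0,\infty)$. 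The decisive difficulty remains the non-explosion step: everything hinges on the subcriticality $\lambda<1/2$ and on the sharp choices of $p$ and $\theta$ in \eqref{p}--\eqref{eq4081106} to make the super-linear source term absorbable.
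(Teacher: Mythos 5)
Your overall skeleton --- truncate to $h_m(u)=|0\vee u\wedge m|^{1+\lambda}$, solve the semi-linear problems via Theorem \ref{mainthm}, obtain $u_m\ge 0$ from the maximum principle, glue along exit times defined through the weighted sup-norm (the second constraint in \eqref{eq4081106}, $\theta<d+\frac{\alpha p}{2}-\gamma p$, is indeed what makes $\sup_{x}|u_m|\le c_0\sup_x|\psi^{\gamma+\frac{\theta-d}{p}-\frac{\alpha}{2}}u_m|$ meaningful), and prove uniqueness by localization --- is exactly the paper's route. The gap is in the non-explosion step, which you yourself flag as the crux but do not prove, and the mechanism you sketch would fail. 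A self-referential inequality $X\le N\|u_0\|+N\delta^{\varepsilon}X^{1+\lambda}$ on windows of length $\delta$ can only be closed by choosing $\delta$ small depending on the size of the data at the start of each window; since that size grows from window to window, the admissible window lengths $\delta_k$ shrink and their sum may converge --- which is precisely finite-time blow-up, the very scenario the bootstrap is supposed to exclude. Equivalently: once you measure $\xi|u_m|^{1+\lambda}$ in an $\bL_p$-type norm and feed it through the embedding, the estimate stays super-linear in $X$, and no uniform-in-$m$, global-in-time bound can come out of it.

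The paper's argument avoids any super-linear self-reference by two devices absent from your proposal. First, a uniform $L_1$-moment bound: using $u_m\ge0$, it tests the equation against $\widetilde{\psi}^{\beta}$ with $\beta$ chosen so that $\frac{\alpha}{2}+\frac{\theta}{p}-\frac{d-1}{p}-1<\beta<\frac{\gamma}{\lambda}\vee\left(\frac{\alpha-d}{2\lambda}-d\right)$ (this is what the first constraint in \eqref{eq4081106} guarantees), and exploits $L_t\widetilde{\psi}^{\beta}\le -N\widetilde{\psi}^{\beta-\alpha}$ near $\partial D$ so the drift term has a favorable sign; optional sampling then gives $\bE\sup_{t\le\tau}\|\widetilde{\psi}^{\beta}u_m(t,\cdot)\|_{L_1(D)}^{1/2}\le N_0$ uniformly in $m$, hence a tail bound on the stopping times $\tau_m(S)$ at which this $L_1$-norm exceeds $S$. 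Second, a linearization via the exponent identity $2s\lambda=1$ with $s=1/(2\lambda)$: writing the noise coefficient as $\xi_m u_m$ with $\xi_m:=\xi|u_m\wedge m|^{1+\lambda}/u_m$, one has $|\xi_m|^{2s}\lesssim|u_m\wedge m|^{2\lambda s}=|u_m\wedge m|$, so $\|\xi_m(t,\cdot)\|_{L_{2s,\theta_0}(D)}^{2s}$ with $\theta_0=d+\beta$ is controlled by the weighted $L_1$-norm, i.e.\ bounded by $NS$ up to $\tau_m(S)$. On $[0,\tau\wedge\tau_m(S)]$ the equation is thus semi-linear with $h(u)=u$ and a random coefficient satisfying Assumption \ref{ass_h} with \emph{finite} $s$ (here $\lambda<1/2$ and $p>2/(1-2\lambda)=2s/(s-1)$ enter, matching the hypotheses of Theorem \ref{mainthm}), and Theorem \ref{mainthm} yields $\|u_m\|_{\frH^{\gamma}_{p,\theta,\alpha}(D,\tau\wedge\tau_m(S))}\le N(S)\|u_0\|_{U^{\gamma}_{p,\theta}(D)}$ with $N(S)$ independent of $m$; Chebyshev plus the tail bound then gives the uniform non-explosion estimate \eqref{eq4090015}. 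You invoke $s=1/(2\lambda)$ but apply it to $\||u_m|^{1+\lambda}\|$ directly, which keeps the super-linear power; the whole point of the paper's trick is that the coefficient, not the source, is measured in $L_{2s}$, so the quantity to control is linear in $u_m$. A smaller gloss: Theorem \ref{thm_max} is an $L_2$, $\theta=d$ statement, so the positivity of $u_m\in\frH^{\gamma}_{p,\theta,\alpha}$ is obtained in the paper by first approximating with finitely many noise modes and initial data in $U^{\gamma}_{2,d}(D)\cap U^{\gamma}_{p,\theta}(D)$; ``a routine verification of the integrability it requires'' does not quite cover this.
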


\begin{remark}
\label{compatible}
(i) \eqref{eq4081106}  is compatible with the condition $\theta\in (d-1, d-1+p)$, as well as with the condition $\theta\in(d-\alpha/2,d-\alpha/2+\alpha p/2)$, which is assumed in Theorem \ref{thm_super}. To see this,  since 
$d+\frac{\alpha p}{2}-\gamma p>d$, it suffices to verify the inequality
\begin{equation*}
    d-\frac{\alpha}{2}<d-1+p-\frac{\alpha p}{2} + \left[\frac{\gamma p}{\lambda} \vee \left(\frac{(\alpha-d)p}{2\lambda} -dp\right)\right].
\end{equation*}
This inequality easily follows from $\alpha\in(0,2)$ and
\begin{equation*}
    \frac{\gamma p}{\lambda} \vee \left(\frac{(\alpha-d)p}{2\lambda} -dp\right) \geq 0.
\end{equation*}
(ii) Obviously, if $d\geq 2$ then \eqref{eq4081106} is 
$$
    \theta <  \left(d+p-1-\frac{\alpha p}{2} + \frac{\gamma p}{\lambda} \right) \wedge \left(d+\frac{\alpha p}{2}-\gamma p\right).
$$
If $d=1$ and $W$ is a space-time white noise, then \eqref{eq4081106} becomes a simpler inequality, as discussed in Remark \ref{d=1} below.
\end{remark}

\begin{remark}
It can be seen from the conditions in Assumption \ref{ass_lambda} $(\lambda,\gamma)$ that two parameters $\lambda$ and $\gamma$ are interdependent.  Specifically, choosing $\lambda$ near its upper bound requires taking  $\gamma>0$ to be sufficiently small.
    Below we describe the admissible ranges  of $\lambda$ and $\gamma$ for the different types of noise  introduced in Example \ref{ex_noise}.

    $(i)$ Space-time white noise:\\
 As noted in Remark \ref{rem4121114} $(i)$,   we need 
\begin{equation}
\label{white}
d=1 \quad \text{and} \quad 0<\gamma<\alpha/2-1/2-\lambda.
\end{equation}
  This requires $\alpha>1$ and  leads to the admissible range of $\lambda$:
    \begin{equation*}
        0\leq \lambda<\alpha/2-1/2.
    \end{equation*}
    Moreover, when  $\lambda$ is chosen  close to its upper bound,  $\gamma$ must be taken close to $0$.

    $(ii)$ Riesz type kernel. \\
For $\Pi(dx)=|x|^{-\beta}dx$,  the condition becomes
    \begin{equation*}
        \alpha-\beta-2\gamma-2d\lambda>0.
    \end{equation*}
    Thus,  we need $\alpha>\beta$ and  the admissible range for $\lambda$ is
    \begin{equation*}
        0\leq \lambda < \alpha/2d-\beta/2d.
    \end{equation*}

    $(iii)$ Bounded continuous functions:\\
  In this case, for any $d\geq1$,  $\lambda$ and $\gamma$ must satisfy
    \begin{equation*}
        \alpha/2-\gamma-d\lambda\in(0,d).
    \end{equation*}
This yields the following admissible range for $\lambda$:
    \begin{equation*}
        0\leq \lambda < \alpha/2d.
    \end{equation*}
 \end{remark}

\begin{remark}
\label{d=1}
We discuss on the range of $\theta$ and $p$ when $W$ is space-time white noise. First, note that due to \eqref{white}, we have $(1+\alpha)/\gamma > 2/(1-2\lambda)$, and therefore \eqref{p} becomes 
$$p>2/(1-2\lambda).
$$  Similarly, one can check that \eqref{eq4081106} becomes  
$
\theta<1+\alpha p/2-\gamma p.
$
Therefore, for Theorem \ref{thm_super} to hold, the admissible range of $\theta$   is 
$$
0<\theta< p \wedge (1+\alpha p/2-\gamma p)$$
 if $D$ is a bounded open interval. 
\end{remark}

\section{Proof of Theorem \ref{thm_white}}
\label{proof semilinear}

\subsection{Linear SPDEs}
In this subsection we prove a version of Theorem \ref{thm_white} for  linear SPDEs. Throughout this subsection we assume $f$ and $g$ are independent of the unknown $u$.

First, we establish  higher regularity of the solution $u$, assuming that  free terms are sufficiently regular and  that  $u\in\frH_{p,\theta,\alpha}^{\mu}(D,\tau)$ for some $\mu\geq 0$.

\begin{lemma} \label{lem1081726}
Let  $\gamma \geq \mu \geq0$, $\alpha\in(0,2), \sigma\in(0,1)$, $p\in[2,\infty)$, and $\tau$ be a stopping time. 
 Assume that $D$ is a bounded $C^{1,\sigma}$ open set and $\theta\in(d-1-\alpha p/2,d-1+p+\alpha p/2)$.
Suppose that $u_0\in U_{p,\theta}^{\gamma}(D)$, $f\in \psi^{\alpha/2}\bH_{p,\theta}^{\gamma-\alpha}(D,\tau)$, $g \in \bH_{p,\theta}^{\gamma-\alpha/2}(D,\tau,l_2)$, and 
 $u$ is a solution to \eqref{linear} with $L_t=-(-\Delta)^{\alpha/2}$ in the space $u\in\frH_{p,\theta,\alpha}^{\mu}(D,\tau)$. Then we have $u\in \frH_{p,\theta,\alpha}^{\gamma,\alpha}(D,\tau)$, and furthermore
\begin{align} \label{eq1081729}
 \|\psi^{-\alpha/2}u\|_{\bH_{p,\theta}^{\gamma}(D,\tau)} 
 &\leq N\Big(\|\psi^{-\alpha/2}u\|_{\bH^{\mu}_{p,\theta}(D,\tau)} + \|u_0\|_{U_{p,\theta}^{\gamma}(D)} \nonumber 
 \\
 &\quad \qquad + \|\psi^{\alpha/2}f\|_{\bH_{p,\theta}^{\gamma-\alpha}(D,\tau)} +  \|g\|_{\bH_{p,\theta}^{\gamma-\alpha/2}(D,\tau,l_2)} \Big),
\end{align}
where $N=N(d,p,\theta,\gamma,\mu,\alpha,\sigma,D)$ is indenepent of $\tau$.
\end{lemma}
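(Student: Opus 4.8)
The plan is to transfer the known whole-space maximal $L_p$-regularity for the model equation $du=(-(-\Delta)^{\alpha/2}u+F)\,dt+\sum_k G^k\,dw^k_t$ to the weighted setting by localizing with the partition of unity $\{\zeta_n\}$ from \eqref{zeta1}--\eqref{zeta3} and rescaling each dyadic annulus to unit scale. First, by Proposition \ref{lem2072238} $(ii)$ and Remark \ref{rem2081915}, it suffices to prove the a priori estimate \eqref{eq1081729} for $u$ smooth and compactly supported in $D$, so that every norm below (in particular the left-hand side) is a priori finite; the membership $u\in\frH_{p,\theta,\alpha}^{\gamma}(D,\tau)$ and the general case then follow by passing to the limit in the uniform bound, and the $\tau$-independence of $N$ is inherited from the $\tau$-independence of each estimate used. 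For each $n$ I would multiply the equation by $\zeta_n$; since $L_t=-(-\Delta)^{\alpha/2}$ does not commute with multiplication, this produces
$$ d(\zeta_n u)=\big(L_t(\zeta_n u)-[L_t,\zeta_n]u+\zeta_n f\big)\,dt+\sum_k \zeta_n g^k\,dw^k_t, $$
where $[L_t,\zeta_n]u:=L_t(\zeta_n u)-\zeta_n L_t u$, so that $\zeta_n u$ solves a whole-space equation with drift free term $-[L_t,\zeta_n]u+\zeta_n f$ and noise term $(\zeta_n g^k)_k$.

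The second step is to apply the whole-space estimate to each $\zeta_n u$. Rescaling $x\mapsto e^{-n}x$ turns $(-\Delta)^{\alpha/2}$ into $e^{-n\alpha}(-\Delta)^{\alpha/2}$ and, by \eqref{zeta2}, makes the rescaled cutoffs $\zeta_{-n}(e^n\cdot)$ have $x$-derivatives bounded uniformly in $n$; hence the scaled whole-space maximal $L_p$-regularity estimate (the nonlocal analogue used in Proposition \ref{prop holder}, with scaling parameter $a=e^{-n\alpha}$, cf. \cite[Theorem 2.2]{H21}) applies with a constant independent of $n$. Summing the resulting bounds against the weights $e^{n\theta}$ and invoking the definition \eqref{def. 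Hptheta}, the contributions of $\zeta_n f$, $\zeta_n g$, and $\zeta_n u_0$ reassemble into $\|\psi^{\alpha/2}f\|_{\bH_{p,\theta}^{\gamma-\alpha}(D,\tau)}$, $\|g\|_{\bH_{p,\theta}^{\gamma-\alpha/2}(D,\tau,l_2)}$, and $\|u_0\|_{U_{p,\theta}^{\gamma}(D)}$ (here the weight $\psi^{\alpha/2}\approx e^{-n\alpha/2}$ on the $n$-th annulus is absorbed into the scaling and the $\theta$-shift of Lemma \ref{lem_prop} $(ii)$), which are exactly the free terms on the right of \eqref{eq1081729}, while the left-hand side reassembles into $\|\psi^{-\alpha/2}u\|_{\bH_{p,\theta}^{\gamma}(D,\tau)}$.

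The heart of the proof, and the step I expect to be hardest, is the commutator estimate
$$ \sum_n e^{n\theta}\big\|\,[L_t,\zeta_n]u\,\big\|_{H_p^{\gamma-\alpha}}^p \;\le\; \epsilon\,\|\psi^{-\alpha/2}u\|_{\bH_{p,\theta}^{\gamma}(D,\tau)}^p+N_\epsilon\,\|\psi^{-\alpha/2}u\|_{\bH_{p,\theta}^{\mu}(D,\tau)}^p, $$
where the norms are taken after the same rescaling. Writing the commutator explicitly as the integral operator with kernel proportional to $(\zeta_n(x)-\zeta_n(y))\,|x-y|^{-d-\alpha}$, one sees that near the diagonal $\zeta_n(x)-\zeta_n(y)=O(|x-y|)$, so this contribution is of order strictly less than $\alpha$ and maps $H_p^{\gamma-\beta}$ into $H_p^{\gamma-\alpha}$ for some $\beta>0$. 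The weighted interpolation inequality of Lemma \ref{lem_prop} $(v)$, together with the embedding \eqref{eq4250951}, then dominates it by $\epsilon\|\psi^{-\alpha/2}u\|_{\bH_{p,\theta}^{\gamma}}+N_\epsilon\|\psi^{-\alpha/2}u\|_{\bH_{p,\theta}^{\mu}}$, and the $\epsilon$-term is absorbed into the left-hand side obtained in the previous step.

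The genuinely nonlocal difficulty is the tail regime $|x-y|\gtrsim e^{-n}$: there $[L_t,\zeta_n]u(x)$ depends on the values of $u$ in annuli far from $\mathrm{supp}(\zeta_n)$, so the summation over $n$ couples distinct dyadic scales and does not decouple as in the local theory of \cite{KL99}. I would control this by exploiting the integrable decay $|x-y|^{-d-\alpha}$ of the kernel to extract a geometric factor in the distance between scales, which is precisely where the $\alpha$-stable structure and the restriction $\theta\in(d-1-\alpha p/2,\,d-1+p+\alpha p/2)$ enter, and then sum the resulting geometric series in the scale index. Once this tail estimate is combined with the near-diagonal part, the full commutator bound follows; inserting it into the summed whole-space inequality and absorbing the $\epsilon$-term yields \eqref{eq1081729}, and the finiteness of its right-hand side gives $u\in\frH_{p,\theta,\alpha}^{\gamma}(D,\tau)$.
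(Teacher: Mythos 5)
Your overall architecture---localization by the $\zeta_n$, rescaling each annulus to unit scale, applying whole-space maximal $L_p$-regularity with $n$-independent constants, and controlling the commutator $[L_t,\zeta_n]u$---is the same as the paper's. The genuine gap is in how you close the commutator step. You bound the commutator by $\epsilon\,\|\psi^{-\alpha/2}u\|_{\bH^{\gamma}_{p,\theta}(D,\tau)}^p+N_\epsilon\|\psi^{-\alpha/2}u\|_{\bH^{\mu}_{p,\theta}(D,\tau)}^p$ and absorb the $\epsilon$-term into the left-hand side; but both the absorption and the interpolation inequality you invoke presuppose that $\|\psi^{-\alpha/2}u\|_{\bH^{\gamma}_{p,\theta}(D,\tau)}$ is finite, which is precisely the conclusion of the lemma---at the outset you only know $u\in\frH^{\mu}_{p,\theta,\alpha}(D,\tau)$. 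Your proposed remedy (reduce to $u$ smooth and compactly supported via Proposition \ref{lem2072238}~$(ii)$ and ``pass to the limit in the uniform bound'') does not repair this: that density statement approximates $u$ only in the $\frH^{\mu}_{p,\theta,\alpha}$-norm, so the data $(u_m(0,\cdot),\bD u_m,\bS u_m)$ of the approximants converge only in the $\mu$-level spaces, while the right-hand side of the a priori estimate for $u_m$ involves the $\gamma$-level norms of the approximants' data, which are not uniformly controlled by the norms of $(u_0,f,g)$. There is therefore no uniform bound to pass to the limit in, and the scheme is circular.

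The paper avoids absorption entirely by two devices your proposal is missing. First, a bootstrap reduction: it suffices to prove the lemma for $\gamma\leq\mu+\alpha/2$, and the general case follows by iterating in increments of $\alpha/2$. Second, under this reduction the weighted, scale-summed commutator estimate of \cite[Lemma 4.3]{CKR23} places $F_n$ in $H_p^{\mu-\alpha/2}\subset H_p^{\gamma-\alpha}$ with a bound by $\|\psi^{-\alpha/2}u\|_{H^{\mu}_{p,\theta}(D)}$ \emph{alone}---no $\epsilon$-term on the high norm, hence nothing to absorb. The localized, rescaled function $u_n(\cdot,\cdot)\zeta_{-n}(e^n\cdot)$ is then an $\bH^{\mu}_p$-solution of a whole-space equation whose free terms lie in the $\gamma$-level spaces, and \cite[Theorem 4.3]{H21} (whole-space solvability together with uniqueness in the larger class) upgrades it to $\bH^{\gamma}_p$ with the estimate; the membership $u\in\frH^{\gamma}_{p,\theta,\alpha}(D,\tau)$ comes from this regularity transfer, not from an approximation argument. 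Your tail-regime discussion correctly identifies that the restriction $\theta\in(d-1-\alpha p/2,d-1+p+\alpha p/2)$ enters through the cross-scale coupling of the nonlocal kernel, but that is exactly the content of the cited commutator lemma, and re-deriving it in $\epsilon$-form would still leave the circularity above. A minor further point: the paper uses the parabolic rescaling $u(e^{n\alpha}t,e^nx)$ together with the rescaled Wiener processes $w^k(n)_t=e^{-n\alpha/2}w^k_{e^{n\alpha}t}$, not only the spatial change of variables, to bring the whole-space theorem to unit scale.
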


\begin{proof}

\textbf{0}. We recall some function spaces on $\bR^d$.  For $p \geq 2$ and a stopping time $\tau\in(0,\infty]$, we define
$$
\bH_p^\gamma(\tau):= L_p(\Omega\times(0,\tau),\cP;H_p^\gamma), \quad \bL_p(\tau):=\bH_p^0(\tau)=L_p(\Omega\times(0,\tau),\cP;L_p),
$$
$$
\bH_p^\gamma(\tau,l_2):= L_p(\Omega\times(0,\tau),\cP;H_p^\gamma(l_2)), \quad \bL_p(\tau,l_2):=\bH_p^0(\tau,l_2),
$$
$$
U_p^\gamma:= L_p(\Omega,\cF_0; B_{p,p}^{\gamma-\alpha/p}).
$$
As discussed in Section \ref{sec_func}, if $u$ is a distribution on $D$ with compact support then $u$ can be considered as a distribution on $\bR^d$. In particular, if $u\in \bH^{\gamma}_{p,\theta}(D,\tau)$ has compact support in $D$, then $u\in \bH^{\gamma}_p(\tau)$.

\textbf{1}.   We note that it is sufficient to prove the lemma only for  the case $\gamma\leq\mu+\alpha/2$. This is because, if  $\gamma>\mu+\alpha/2$,  we can proceed as follows.   Since the assumptions on $u_0, f$, and $g$ are satisfied for any $\gamma'\leq \gamma$,  the lemma  applies for all $\gamma'\leq \mu+\alpha/2$ in place of $\gamma$. In particular, this implies $u\in \bH^{\mu+\alpha/2}_p(\tau)$. Now, suppose $\gamma=\mu+k \alpha/2+c$ where $k\in \bN_+$ and $c\in (0,\alpha/2)$. Then,  we can apply the lemma iteratively with  $\mu'=\mu+\alpha/2, \mu+2\alpha/2, \cdots, \mu+(k-1)\alpha/2$, to establish the result for $\gamma'=\mu+k \alpha/2$. Finally, since $\gamma\leq (\mu+k \alpha/2)+\alpha/2$, we can apply the lemma once more with $\mu'=\mu+k \alpha/2$ to extend the result to  $\gamma$.

\vspace{1mm}

\textbf{2}. 
We prove the lemma for the case  $\gamma\leq\mu+\alpha/2$, following the approach used in the proof of \cite[Lemma 5.8]{K99}.
For each $n\in\bZ$, denote 
$$
u_n(t,x):=u(e^{n\alpha}t, e^n x), \quad u_{0n}(x):=u_0(e^nx)
$$
$$
f_n(t,x) := f(e^{n\alpha}t,e^n x), \quad g_n(t,x) := g(e^{n\alpha}t,e^n x), \quad w^k(n)_t:= e^{-n\alpha/2}w^k_{e^{n\alpha}t}.
$$
Let $\{\zeta_n : n\in \bZ\}$ be a collection of functions satisfying \eqref{zeta1}-\eqref{zeta3} with $(c_1,c_2)=(1,e^2)$.
 Then $u_n(\cdot,\cdot){\zeta}_{-n}(e^n\cdot) \in \bH^{\mu}_{p}(e^{-n\alpha}\tau)$ and it is a solution (in the sense of distribution) to the equation
$$
\begin{aligned}
&d(u_n(\cdot,\cdot)\zeta_{-n}(e^n\cdot))(t,x) \nonumber
\\
&= \left(-(-\Delta)^{\alpha/2} (u_n(\cdot,\cdot)\zeta_{-n}(e^n\cdot))(t,x) + F_n(t,x) + e^{n\alpha}f_n(t,x) \zeta_{-n}(e^n x) \right) dt
\\
&\quad + \left(e^{n\alpha/2} g_n^k(t,x) \zeta_{-n}(e^n x)\right) dw^k(n)_t,
\end{aligned}
$$
in $\Omega\times (0,e^{-n\alpha}\tau) \times \bR^d$ with $u_n(0,x) = u_{0n}(x) \zeta_{-n}(e^n x)$, where
\begin{align*}
F_n(t,x) &:= (-\Delta)^{\alpha/2}(u_n(\cdot,\cdot)\zeta_{-n}(e^n\cdot))(t,x) - \zeta_{-n}(e^nx) (-\Delta)^{\alpha/2}u_n(t,x).
\end{align*}

 Due to \eqref{eq4250951} and \cite[Lemma 4.3]{CKR23} with $\gamma'=\mu-\alpha/2\geq \gamma-\alpha$, $F_n(t,\cdot)$ makes sense in $\bR^d$ and satisfies
\begin{align} \label{eq1082115}
    \sum_{n\in\bZ}e^{n(\theta-\alpha p/2)}\|F_n(e^{-n\alpha}t,\cdot)\|_{H_p^{\gamma-\alpha}}^p &\leq \sum_{n\in\bZ}e^{n(\theta-\alpha p/2)}\|F_n(e^{-n\alpha}t,\cdot)\|_{H_p^{\mu-\alpha/2}}^p \nonumber
    \\
    &\leq N\|\psi^{-\alpha/2}u(t,\cdot)\|_{H_{p,\theta}^{\mu}(D)}^p.
\end{align}
Here, we note that \cite[Lemma 4.3]{CKR23}  holds in arbitrary open sets.

Since $u_0\in U_{p,\theta}^{\gamma}(D)$, $f\in \psi^{\alpha/2}\bH_{p,\theta}^{\gamma-\alpha}(D,\tau)$, and $g \in \bH_{p,\theta}^{\gamma-\alpha/2}(D,\tau,l_2)$, we have
\begin{eqnarray*}
&u_{0n}(\cdot) \zeta_{-n}(e^n \cdot)\in U_p^\gamma,&
\\
&F_n, e^{n\alpha}f_n(\cdot,\cdot) \zeta_{-n}(e^n \cdot)\in\bH_{p}^{\gamma-\alpha}(e^{-n\alpha}\tau),&
\\
&e^{n\alpha/2} g_n^k(\cdot,\cdot) \zeta_{-n}(e^n\cdot) \in \bH_{p}^{\gamma-\alpha/2}(e^{-n\alpha}\tau,l_2).&
\end{eqnarray*}
Thus, by \cite[Theorem 4.3]{H21}, we obtain that $u_n(\cdot,\cdot)\zeta_{-n}(e^n \cdot)\in \bH^{\gamma}_{p}(e^{-n\alpha}\tau)$ and
\begin{align*} 
&\|(-\Delta)^{\alpha/2}(u(\cdot,e^n\cdot)\zeta_{-n}(e^n\cdot))\|_{\bH_p^{\gamma-\alpha}(\tau)}^p 
\\
&=e^{n\alpha}\|(-\Delta)^{\alpha/2}(u_n(\cdot,\cdot)\zeta_{-n}(e^n\cdot))\|_{\bH_p^{\gamma-\alpha}(e^{-n\alpha}\tau)}^p 
\\
&\leq Ne^{n\alpha}\|F_{n}(\cdot,\cdot)\|_{\bH_p^{\gamma-\alpha}( e^{-n\alpha}\tau)}^p + Ne^{n\alpha}\|e^{n\alpha}f_{n}(\cdot,\cdot) \zeta_{-n}(e^n\cdot)\|_{\bH_p^{\gamma-\alpha}( e^{-n\alpha}\tau)}^p
\\
&\quad + N e^{n\alpha} \|u_{0n}(\cdot) \zeta_{-n}(e^n \cdot)\|_{U_p^\gamma}^p + Ne^{n\alpha} \|e^{n\alpha/2} g_n(\cdot,\cdot) \zeta_{-n}(e^n\cdot)\|_{\bH_p^{\gamma-\alpha/2}(e^{-n\alpha}\tau,l_2)}^p
\\
&=N\|F_{n}(e^{-n\alpha}\cdot,\cdot)\|_{\bH_p^{\gamma-\alpha}(\tau)}^p + N\|e^{n\alpha} \zeta_{-n}(e^n\cdot) f(\cdot,e^n\cdot)\|_{\bH_p^{\gamma-\alpha/2}(\tau)}^p
\\
&\quad + N \|e^{n\alpha/p}u_{0n}(\cdot) \zeta_{-n}(e^n \cdot)\|_{U_p^\gamma}^p + N\|e^{n\alpha/2} \zeta_{-n}(e^n\cdot) g(\cdot,e^n\cdot)\|_{\bH_p^{\gamma-\alpha/2}(\tau,l_2)}^p.
\end{align*}
We note that in \cite{H21}, $(2.19)$ from \cite{KK12} was used. This result was established with bounded stopping times, and the constant in this result is independent of stopping times. Therefore, we see that $N$ in the above inequalities is independent
 of both $\tau$ and $n$.
By \eqref{eq1082115},
\begin{eqnarray*}
&\sum_{n\in\bZ}e^{n(\theta-\alpha p/2)}\|(-\Delta)^{\alpha/2}(u(\cdot,e^n\cdot)\zeta_{-n}(e^n\cdot))\|_{\bH_p^{\gamma-\alpha}(\tau)}^p 
\\
 &\leq N\Big(\|\psi^{-\alpha/2}u\|_{\bH_{p,\theta}^{\mu}(D,\tau)}^p + \|u_0\|_{U_{p,\theta}^{\gamma}(D)} + \|\psi^{\alpha/2}f\|_{\bH_{p,\theta}^{\gamma-\alpha}(D,\tau)} + \|g\|_{\bH_{p,\theta}^{\gamma-\alpha/2}(D,\tau,l_2)}^p\Big). 
 \end{eqnarray*}
Therefore, this and the relation 
 $$\|u\|_{H^{\gamma}_{p}} \approx \left(\|u\|_{H_p^{\gamma-\alpha}}+\|(-\Delta)^{\alpha/2}u\|_{H_p^{\gamma-\alpha}}\right)$$
lead to 
  \eqref{eq1081729} for $\gamma \leq \mu+\alpha/2$. The lemma is proved. 
\end{proof}

Next, we introduce a probabilistic representation of the solution. Let
$X=\{X_t, t\geq0\}$ be a rotationally symmetric $\alpha$-stable $d$-dimensional L\'evy process such that
\begin{equation*}
   \bE e^{i\xi\cdot X_t}=e^{-|\xi|^\alpha t}, \quad \forall \xi\in\bR^d.
 \end{equation*}
For $x\in \bR^d$, let $\kappa_{D}:=\kappa^x_D:=\inf\{t\geq0: x+X_t\not\in D\}$ be the first exit time of $X$ from $D$.
For bounded measurable functions $f$, we denote
\begin{equation*}
  P_t^D f(x) = \bE [f(x+X_t); \kappa_D>t], \quad t>0, \, x\in \bR^d.
\end{equation*}
Then $\{P_t^D\}_{t\geq0}$ is a Feller semigroup in $L_\infty(D)$ when $D$ is a $C^{1,\sigma}$ open set (see page 68 of \cite{C86}).

\begin{lemma} \label{lem1072321}
  Let $\alpha\in(0,2), \sigma\in(0,1), \gamma\in [0,\alpha]$, $p\in[2,\infty)$, and $\tau$ be a stopping time. 
 Assume that $\theta\in(d-1,d-1+p)$ if $D$ is a bounded $C^{1,\sigma}$ convex domain, and $\theta\in(d-\alpha/2,d-\alpha/2+\alpha p/2)$ if $D$ is a bounded $C^{1,\sigma}$ open set. Suppose that Assumption \ref{ass_nu} is satisfied.
 
 $(i)$ For any $f\in \psi^{-\alpha/2}\bH_{p,\theta}^{\gamma-\alpha}(D,\tau)$, there is a unique solution $u$ to
\begin{equation} \label{eq1072348}
\begin{cases}
d u = (L_tu +f)dt, \quad &(t,x)\in(0,T)\times D,
\\
u(0,x)=0,\quad & x\in D,
\\
u(t,x)=0,\quad &(t,x)\in(0,T)\times D^c
\end{cases}
\end{equation}
in the space $\frH_{p,\theta,\alpha}^{\gamma}(D,\tau)$. Moreover, for this solution, we have
\begin{align} \label{eq1061253}
\|u\|_{\frH_{p,\theta,\alpha}^{\gamma}(D,\tau)}\leq N \|\psi^{\alpha/2} f\|_{\bH_{p,\theta}^{\gamma-\alpha}(D,\tau)},
\end{align}
where $N=N(d,p,\theta,\gamma,\alpha,\Lambda_0,\Lambda_1,\sigma,D)$.

$(ii)$ When $L_t=-(-\Delta)^{\alpha/2}$, the claims in $(i)$ hold true for all $\gamma\in\bR$ where the constant $N$ is independent of $\Lambda_0$ and $\Lambda_1$.
\end{lemma}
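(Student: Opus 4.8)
The plan is to exploit the fact that equation \eqref{eq1072348} carries no stochastic differential: for each fixed $\omega$ it is a purely deterministic nonlocal parabolic equation with random (predictable) forcing, so the deterministic maximal $L_p$-regularity theory applies pathwise. I would first dispose of the model case $(ii)$, where $L_t=-(-\Delta)^{\alpha/2}$, and then pass to general $L_t$ in $(i)$.

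For $(ii)$, existence is obtained from the probabilistic representation
$$
u(t,x)=\int_0^t P^D_{t-s}f(s,\cdot)(x)\,ds,
$$
which solves \eqref{eq1072348} with $L_t=-(-\Delta)^{\alpha/2}$ for smooth compactly supported $f$; for such $f$ the integral is manifestly predictable since $P^D_t$ is deterministic and the time integration is causal. The a priori estimate \eqref{eq1061253} comes from applying the deterministic maximal regularity of \cite[Theorem 2.8]{DR24} on the interval $(0,\tau(\omega))$ for each $\omega$, which yields
$$
\|\psi^{-\alpha/2}u\|_{L_p((0,\tau);H_{p,\theta}^{\gamma}(D))}^p\leq N\,\|\psi^{\alpha/2}f\|_{L_p((0,\tau);H_{p,\theta}^{\gamma-\alpha}(D))}^p
$$
with $N$ independent of $\omega$, after which I integrate in $\omega$. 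Recalling $\bD u=L_tu+f$, $\bS u=0$, and $u(0,\cdot)=0$, Lemma \ref{lem1072135} bounds $\|\psi^{\alpha/2}L_tu\|_{\bH_{p,\theta}^{\gamma-\alpha}(D,\tau)}$ by $\|\psi^{-\alpha/2}u\|_{\bH_{p,\theta}^{\gamma}(D,\tau)}$, so the full $\frH$-norm is controlled and \eqref{eq1061253} follows. Predictability for general $f$ is then obtained by approximating $f$ in $\psi^{-\alpha/2}\bH_{p,\theta}^{\gamma-\alpha}(D,\tau)$ by elements of $\bH_0^\infty(D,\tau)$, solving for each approximant, and passing to the limit using \eqref{eq1061253} together with the completeness of $\frH_{p,\theta,\alpha}^{\gamma}(D,\tau)$ from Proposition \ref{lem2072238}. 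Uniqueness is immediate from linearity and \eqref{eq1061253}.

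For $(i)$ the argument is parallel. The deterministic maximal $L_p$-regularity for the general nonlocal operator \eqref{oper}, \cite[Theorem 2.7]{DR24}, applies pathwise on $(0,\tau(\omega))$ and, because its constant depends on the operator only through the structural bounds $\Lambda_0,\Lambda_1$ of Assumption \ref{ass_nu} (which are uniform in $(\omega,t)$), furnishes both the $\omega$-wise solution and the a priori estimate with an $\omega$-independent $N$; integrating in $\omega$ and invoking Lemma \ref{lem1072135} gives \eqref{eq1061253}, while predictability and uniqueness follow exactly as in $(ii)$. Alternatively, one may run the method of continuity in $\frH_{p,\theta,\alpha}^{\gamma}(D,\tau)$ along the family $L^{(\lambda)}:=(1-\lambda)\bigl(-(-\Delta)^{\alpha/2}\bigr)+\lambda L_t$, $\lambda\in[0,1]$: each $L^{(\lambda)}$ is of the form \eqref{oper} with spherical part a convex combination of that of the fractional Laplacian and $\mu_t$, so both the lower bound of Assumption \ref{ass_nu}$(ii)$ and the upper bound of Assumption \ref{ass_nu}$(iii)$ are preserved with the same $\Lambda_0,\Lambda_1$; hence \eqref{eq1061253} holds uniformly in $\lambda$ (here the restriction $\gamma\in[0,\alpha]$ enters through Lemma \ref{lem1072135}$(i)$), and solvability at $\lambda=0$ from part $(ii)$ propagates to $\lambda=1$.

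The principal obstacle is not the analytic estimate, which is imported from the deterministic theory, but the passage to the predictable framework: one must check that the pathwise solutions assemble into a predictable $\frH$-valued process and that the deterministic regularity constant is genuinely $\omega$-independent. The first point is handled by the density of $\bH_0^\infty(D,\tau)$ and the completeness of $\frH_{p,\theta,\alpha}^{\gamma}(D,\tau)$; the second is guaranteed precisely because Assumption \ref{ass_nu} supplies $\Lambda_0,\Lambda_1$ uniformly in $(\omega,t)$, so the deterministic constant transfers unchanged. A secondary technical point is verifying that the representation solution lies in $\frH_{p,\theta,\alpha}^{\gamma}(D,\tau)$ with $\bD u=L_tu+f$ in the distributional sense of \eqref{eq3291308}, which is checked on smooth $f$ and extended by the same limiting procedure.
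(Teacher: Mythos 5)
Your architecture for $\gamma=\alpha$ — pathwise reduction to a deterministic equation since $\bS u=0$, an $\omega$-independent constant coming from the uniformity of $\Lambda_0,\Lambda_1$ in Assumption \ref{ass_nu}, the method of continuity along $L^{(\lambda)}$ to reduce existence to $L_t=-(-\Delta)^{\alpha/2}$, the representation $u(t)=\int_0^t P^D_{t-s}f(s,\cdot)\,ds$ with adaptedness checked via Fubini for $f\in\bH_0^\infty(D,\tau)$, and a density/completeness passage to general $f$ — is essentially the paper's Case~1. The genuine gap is for $\gamma<\alpha$: you import the a priori estimate for \emph{every} $\gamma\in[0,\alpha]$ directly from the deterministic theory of \cite{DR24}, but \cite[Theorem 2.7]{DR24} provides maximal regularity only for forcing in $\psi^{-\alpha/2}L_{p,\theta}(D)$, i.e.\ exactly the case $\gamma=\alpha$; it does not treat forcing in the negative-order spaces $\psi^{-\alpha/2}H^{\gamma-\alpha}_{p,\theta}(D)$ with $\gamma<\alpha$. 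The paper itself flags this in Remark \ref{rem4061023}(ii): solvability with $f$ in a negative-order weighted Sobolev space is one of its new contributions. It is obtained there by a duality argument at $\gamma=0$ — one tests the equation against solutions of the \emph{backward} problem in the dual scale $(p',\theta')$, whose existence comes from the already-proved case $\gamma=\alpha$, and uses the smooth approximations of Remark \ref{rem2081915}(ii) to justify the integration by parts — followed by complex interpolation of the solution operator between the endpoints $\gamma=0$ and $\gamma=\alpha$. Without these two steps, your estimate \eqref{eq1061253} for $\gamma<\alpha$, and with it uniqueness in $\frH^{\gamma}_{p,\theta,\alpha}(D,\tau)$ in that range, is unproven; the pathwise transfer argument alone cannot supply them, because the deterministic input simply does not exist at that regularity.

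The same issue recurs, amplified, in part $(ii)$: there you need all $\gamma\in\bR$, but your proposal again relies only on the representation formula plus deterministic maximal regularity, which caps out at the range the deterministic theorems cover. The paper reaches $\gamma>\alpha$ by the bootstrap of Lemma \ref{lem1081726} applied with $\mu=\alpha$ (localization, the commutator estimate \cite[Lemma 4.3]{CKR23}, and the whole-space result \cite[Theorem 4.3]{H21}), and reaches $\gamma<0$ by repeating the duality and approximation arguments; neither step appears in your write-up. Two minor slips besides: the parabolic deterministic result you need is \cite[Theorem 2.7]{DR24}, not Theorem 2.8, which is the elliptic statement; and along the continuity family $L^{(\lambda)}$ the constants are preserved only up to adjustment (e.g.\ the lower bound becomes $\min$ of the two nondegeneracy constants, as the paper notes), so claiming ``the same $\Lambda_0,\Lambda_1$'' is not literally correct, though this is harmless.
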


\begin{proof}
$(i)$
\textbf{Case 1.} Let $\gamma=\alpha$.
We first establish the a priori estimate \eqref{eq1061253} for $\gamma=\alpha$, assuming that   $u\in \frH_{p,\theta,\alpha}^\alpha(D,\tau)$ is a solution to equation \eqref{eq1072348}. 
Since $\bS u=0$,  there exists $\Omega'\subset\Omega$ of full probability such that for each fixed argument $\omega\in \Omega'$, $u(\omega)$ satisfies the (deterministic) equation
\begin{equation*}
  du(\omega)= \left(L_t u(\omega) + f(\omega)\right)dt.
\end{equation*}
In other words, for any $\phi\in C_c^\infty(D)$ and $t\leq \tau(\omega)$,
\begin{equation*}
    (u(\omega,t,\cdot),\phi)_D = \int_0^t (L_s u(\omega,s,\cdot) + f(\omega,s,\cdot),\phi)_D ds.
\end{equation*}
Here we remark that $\Omega'$ is independent of $\phi$, which follows from the fact that the space of test functions $\cD(D)$ is separable.
Thus, by \cite[Theorem 2.7]{DR24},
\begin{align*}
  \|\psi^{-\alpha/2}u(\omega)\|_{L_p((0,\tau(\omega));H_{p,\theta}^\alpha(D))} \leq N \|\psi^{\alpha/2}f(\omega)\|_{L_p((0,\tau(\omega));L_{p,\theta}(D))},
\end{align*}
where $N$ is independent of $\omega\in \Omega'$ and $T$.  This easily leads to the a priori estimate \eqref{eq1061253}.

Next we prove the existence of solution. Note that for any $\lambda \in [0,1]$, the operator $L^{\lambda}_t:=(1-\lambda)L_t+\lambda (-(-\Delta)^{\alpha/2})$ satisfies  Assumption \ref{ass_nu}, with constants $\Lambda_0$ and $\Lambda_1$ that can be 
chosen independently of $\lambda$ (possibly by adjusting them to other suitable constants).  As a result, the a priori estimate \eqref{eq1061253} also holds for each operator  $L^{\lambda}_t$, with a constant $N$ that does not depend on  $\lambda$.
Therefore, by applying the method of continuity (see e.g. the proof of \cite[Theorem 5.1]{KAA}),  it is sufficient to consider the case where
 $L_t=-(-\Delta)^{\alpha/2}$.

Assume that $f\in \bH_0^\infty(D,\tau)$ and define
\begin{equation} \label{eq6301523}
  u(t,x):= \int_0^t  P_{t-s}^D f(s,\cdot)(x)ds=\int^{\infty}_0 1_{s<t} P_{t-s}^D f(s,\cdot)(x)ds, \quad t\leq \tau.
\end{equation}
Here, we note that $P_t^Df(x)=0$ for $x\in D^c$, which implies that $u$ is a $\cD'(\bR^d)$-valued function.
 Then by \cite[Lemma 5.1]{DR24}, $u$ is a solution to \eqref{eq1072348} with $L_t=-(-\Delta)^{\alpha/2}$, and for each $\omega\in \Omega$, $\psi^{-\alpha/2}u(\omega)\in L_p((0,\tau);H_{p,\theta}^\alpha(D))$. To conclude that $u\in \frH_{p,\theta,\alpha}^\alpha(D,\tau)$, it remains to  prove  the desired measurability of $u$. Since $u$ is continuous in $t$, we only need to prove that it is $\cF_t$-adapted. Recall that $f$ is given in the form \eqref{eq6301049}, which leads to
\begin{equation*}
  P_{t-s}^D f(s,\cdot)(x) = \sum_{i=1}^n 1_{(\tau_{i-1},\tau_i]}(s) P_{t-s}^Df_i (x).
\end{equation*}
By \cite[Lemma 5.1]{DR24}, for each $i$, $P_{t-s}^Df_i\in \psi^{\alpha/2}H_{p,\theta}^\alpha(D)$.
Furthermore, since $P_{t-s}^Df_i (x)$ is independent of $\omega$, for each $t>0$, $(\omega,s)\to 1_{s<t}P_{t-s}^D f(s,\cdot)$ is a $\psi^{\alpha/2}H_{p,\theta}^\alpha(D)$-valued $\cF_t\times \cB(\bR_+)$-measurable function. Thus, by Fubini's theorem, $u$ is $\cF_t$-apapted.

For general $f$, take $f_n \in \bH_0^\infty(D,\tau)$ such that $f_n \to f$ in  $\psi^{-\alpha/2}\bH_{p,\theta}^{\gamma-\alpha}(D,\tau)$. Let $u_n\in \frH_{p,\theta,\alpha}^{\gamma,\alpha}(D,\tau)$ be the solution to equation \eqref{eq1072348} with $f_n$.  Then considering the a priori estimate for $u_n-u_m$, we find the $u_n $ is a Cauchy sequence in $\frH_{p,\theta,\alpha}^{\gamma}(D,\tau)$ and its limit $u\in \frH_{p,\theta,\alpha}^{\gamma}(D,\tau)$ satisfies equation \eqref{eq1072348} with general $f$. 

\vspace{1mm}
\textbf{Case 2.} Let $\gamma=0$. Again, 
we first prove the a priori estimate \eqref{eq1061253}. As in the case $\gamma=\alpha$, it suffices to consider the deterministic case; we assume that $L_t$ and $u$ are independent of $\omega$, and $\tau$ is a  constant. 
Suppose $u\in C_c^\infty([0,\tau]\times D)$ and $u(0,x)=0$, then by integration by parts,  for any deterministic function
 $v\in C_c^\infty([0,\tau]\times D)$ such that $v(\tau,x)=0$,
\begin{align*}
  \int_0^\tau \int_D u(\partial_t v+L_tv) \,dxdt &= - \int_0^\tau \int_D v(\partial_t u-L_tu) \,dxdt.
\end{align*}
 This and Lemma \ref{lem_prop} $(iii)$ yield that
\begin{align} \label{eq1071416}
  \left|\int_0^\tau \int_D u(\partial_t v+L_tv) \,dxdt\right|\leq N \|\psi^{\alpha/2}(\partial_t u-Lu)\|_{\bH_{p,\theta}^{-\alpha}(D,\tau)}\|\psi^{-\alpha/2}v\|_{\bH_{p',\theta'}^{\alpha}(D,\tau)},
  \end{align}
where $1/p+1/p'=1$ and $\theta/p+\theta'/p' = d$.
By  the result when  $\gamma=\alpha$, for any deterministic function $h\in \psi^{-\alpha/2}\bL_{p',\theta'}(D,\tau)$, there is a solution $w \in \psi^{\alpha/2}\bH_{p',\theta'}^\alpha(D,\tau)$ to the following backward equation
\begin{equation*}
\begin{cases}
\partial_t w(t,x)=-L_tw(t,x)+h(t,x),\quad &(t,x)\in(0,\tau)\times D,
\\
w(\tau,x)=0,\quad & x\in D,
\\
w(t,x)=0, \quad & (t,x)\in(0,\tau)\times D^c,
\end{cases}
\end{equation*}
satisfying
\begin{equation*}
  \|\psi^{-\alpha/2}w\|_{\bH_{p',\theta'}^\alpha(D,\tau)} \leq N \|\psi^{\alpha/2} h\|_{\bL_{p',\theta'}(D,\tau)}.
\end{equation*}
Here, we emphasize that $w$ is nonrandom, which can be derived from the representation of solution \eqref{eq6301523}. Thus, by Remark \ref{rem2081915} $(ii)$, there is a sequence of functions $v_n\in C_c^\infty([0,\tau]\times D)$ so that $v_n(\tau)=0$, $v_n\to w$ and $\partial_t v_{n} +L_tv_n \to h$ in their corresponding spaces.
By applying \eqref{eq1071416} with $v_n$ instead of $v$, and letting $n\to \infty$, we obtain that for any $h\in \psi^{-\alpha/2}\bL_{p',\theta'}(D,\tau)$,
\begin{align*}
  \left|\int_0^\tau \int_D uh \,dxdt\right| &\leq N \|\psi^{\alpha/2}(u_t-L_tu)\|_{\bH^{-\alpha}_{p,\theta}(D,\tau)} \|\psi^{\alpha/2} h\|_{\bL_{p',\theta'}(D,\tau)}.
\end{align*}
Since $h\in \psi^{-\alpha/2}\bL_{p',\theta'}(D,\tau)$ is arbitrary, it follows from Lemma \ref{lem_prop} $(iii)$ that
\begin{align} \label{eq6301604}
  \|\psi^{-\alpha/2}u\|_{\bL_{p,\theta}(D,\tau)} \leq N \|\psi^{\alpha/2}(u_t-Lu)\|_{\bH^{-\alpha}_{p,\theta}(D,\tau)},
\end{align}
which proves the a priori estimate \eqref{eq1061253} when $\gamma=0$, given that  $u\in C_c^\infty([0,\tau]\times D)$.
For general $u\in \frH_{p,\theta,\alpha}^0(D,\tau)$, again by Remark \ref{rem2081915} $(ii)$, there is a sequence $u_n \in C_c^\infty([0,\tau]\times D)$ such that $u_n\to u$ in $\frH_{p,\theta,\alpha}^0(D,\tau)$. Moreover, each $u_n$ satisfies \eqref{eq6301604} in place of $u$. Letting $n\to\infty$ in the inequality yields the desired estimate for $u$.  

For the solvability of the equation, due to an approximation argument, we  may assume $f\in \bH_0^\infty(D,\tau)$. Consequently the solvability in  $\frH_{p,\theta,\alpha}^0(D,\tau)$ follows from the result for $\gamma=\alpha$ and the relation 
 $\frH_{p,\theta,\alpha}^\alpha(D,\tau)\subset \frH_{p,\theta,\alpha}^0(D,\tau)$.

\vspace{1mm}

\textbf{Case 3.} Let $\gamma \in (0,\alpha)$.
In Cases \textbf{1} and \textbf{2}, it is proved that the operator $f\to u$ is bounded from $\bH_{p,\theta}^{-\alpha}(D,\tau)$ and $\bH_{p,\theta}^{0}(D,\tau)$ to $\frH_{p,\theta,\alpha}^{0}(D,\tau)$ and $\frH_{p,\theta,\alpha}^{\alpha}(D,\tau)$, respectively.  Thus, by the complex
interpolation of operators (see e.g. \cite[Theorem C.2.6]{HVVW16}), we also have the solvability and the desired estimate  for any $\gamma\in(0,\alpha)$.
\vspace{2mm}

$(ii)$ Now we deal with the case when $L_t=-(-\Delta)^{\alpha/2}$ and $\gamma\in\bR$. By the result in $(i)$, it suffices to consider the cases when $\gamma>\alpha$ and $\gamma<0$. Moreover, as above, we only need to prove the a priori estimate \eqref{eq1061253}. For $\gamma>\alpha$, by applying \eqref{eq1081729} with $\mu=\alpha$, $u_0=0$, and $g=0$,
\begin{align*}
  \|u\|_{\frH_{p,\theta,\alpha}^{\gamma}(D,\tau)}\leq N \left(\|\psi^{\alpha/2} f\|_{\bH_{p,\theta}^{\gamma-\alpha}(D,\tau)} + \|\psi^{-\alpha/2} u\|_{\bH_{p,\theta}^{\alpha}(D,\tau)} \right).
\end{align*}
Then it remains to apply \eqref{eq1061253} with $\gamma=\alpha$.
For $\gamma<0$, one just needs to repeat the above duality and approximation arguments.
The lemma is proved. 
\end{proof}

Next, we prove the solvability of equation \eqref{linear} when  $L_t=-(-\Delta)^{\alpha/2}$, $u_0=0$, and $f=0$.

\begin{lemma} \label{lem1082302}
Let $\alpha\in(0,2), \sigma\in(0,1)$, $p\in[2,\infty)$, $T\in(0,\infty)$, and $\tau\leq T$ be a bounded stopping time. 
Assume that $\theta\in(d-1-\alpha p/2,d-1+p+\alpha p/2)$ and $D$ is a bounded $C^{1,\sigma}$ open set.
Then for any $g\in \bH_0^\infty(D,\tau,l_2)$, there exists $u\in\frH_{p,\theta,\alpha}^{\gamma}(D,\tau)$ for any $\gamma\in\bR$, which is a solution to \eqref{linear} with $L_t=-(-\Delta)^{\alpha/2}$ and $u_0=f=0$.
\end{lemma}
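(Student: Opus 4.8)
The plan is to eliminate the stochastic term by subtracting off the ``integrated noise,'' thereby reducing the problem to the purely deterministic solvability already established in Lemma \ref{lem1072321} $(ii)$. Since $g\in\bH_0^\infty(D,\tau,l_2)$, only finitely many components $g^k$ are nonzero, and each $g^k$ has the form \eqref{eq6301049}, i.e. $g^k(t,x)=\sum_i 1_{(\tau_{i-1}^k,\tau_i^k]}(t)g_i^k(x)$ with $g_i^k\in C_c^\infty(D)$ and bounded stopping times $\tau_i^k$. I would define
$$
\eta(t,x):=\sum_k\int_0^t g^k(s,x)\,dw_s^k=\sum_k\sum_i g_i^k(x)\bigl(w^k_{t\wedge\tau_i^k}-w^k_{t\wedge\tau_{i-1}^k}\bigr).
$$
For each $(\omega,t)$ the function $\eta(t,\cdot)$ is a finite linear combination of the $g_i^k$, hence lies in $C_c^\infty(D)$, and its coefficients are bounded $\cF_t$-measurable random variables. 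Consequently $\eta$ is predictable and $\eta\in\bH_{p,\theta}^\mu(D,\tau)$ for every $\mu\in\bR$, with $d\eta=\sum_k g^k\,dw^k_t$, so that $\bD\eta=0$, $\bS\eta=g$, and $\eta(0,\cdot)=0$. In particular $\eta\in\frH_{p,\theta,\alpha}^\gamma(D,\tau)$ for every $\gamma$.

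Next I would set $f:=-(-\Delta)^{\alpha/2}\eta$. Because $\eta(t,\cdot)$ is supported in a fixed compact subset of $D$ strictly away from $\partial D$, the singular weight is harmless and $\psi^{-\alpha/2}\eta(t,\cdot)\in C_c^\infty(D)$, so $\eta\in\psi^{\alpha/2}\bH_{p,\theta}^\gamma(D,\tau)$ for every $\gamma$. Lemma \ref{lem1072135} $(ii)$ then applies and yields $f\in\psi^{-\alpha/2}\bH_{p,\theta}^{\gamma-\alpha}(D,\tau)$ for every $\gamma\in\bR$, with $f$ predictable since $\eta$ is. Applying Lemma \ref{lem1072321} $(ii)$ with this forcing term (and $L_t=-(-\Delta)^{\alpha/2}$, which makes every $\gamma\in\bR$ admissible), I obtain a unique $v\in\frH_{p,\theta,\alpha}^\gamma(D,\tau)$ solving
$$
dv=\bigl(-(-\Delta)^{\alpha/2}v+f\bigr)\,dt,\quad v(0,\cdot)=0,
$$
with zero exterior condition; note $\bS v=0$, so $v$ carries no stochastic part.

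Finally I would set $u:=v+\eta$ and verify it is the desired solution. Both $v$ and $\eta$ have support in $\overline D$, hence so does $u$, and $u\in\psi^{\alpha/2}\bH_{p,\theta}^\gamma(D,\tau)$. Computing the Itô differential,
$$
du=dv+d\eta=\bigl(-(-\Delta)^{\alpha/2}v+f\bigr)\,dt+\sum_k g^k\,dw^k_t=-(-\Delta)^{\alpha/2}(v+\eta)\,dt+\sum_k g^k\,dw^k_t,
$$
which, since $f=-(-\Delta)^{\alpha/2}\eta$, is exactly $du=-(-\Delta)^{\alpha/2}u\,dt+\sum_k g^k\,dw^k_t$; pairing against $\phi\in C_c^\infty(D)$ confirms \eqref{eq3291308} with $u(0,\cdot)=0$. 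Membership $u\in\frH_{p,\theta,\alpha}^\gamma(D,\tau)$ for all $\gamma$ follows because $\bD u=-(-\Delta)^{\alpha/2}u\in\psi^{-\alpha/2}\bH_{p,\theta}^{\gamma-\alpha}(D,\tau)$ (again Lemma \ref{lem1072135} $(ii)$) and $\bS u=g\in\bH_{p,\theta}^{\gamma-\alpha/2}(D,\tau,l_2)$ (immediate from $g\in\bH_0^\infty$). The only point needing genuine care is the mapping property of $f$: the entire argument hinges on $\eta$ being compactly supported strictly inside $D$, which is precisely what lets the weight $\psi^{\pm\alpha/2}$ and the nonlocal operator $(-\Delta)^{\alpha/2}$ interact harmlessly and, through Lemma \ref{lem1072135} $(ii)$, propagate the regularity to every order $\gamma$.
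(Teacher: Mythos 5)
Your proposal is correct and is essentially the paper's own proof: the paper defines the same integrated noise $G(t,x)=\sum_k\int_0^t g^k(s,x)\,dw_s^k$ (your $\eta$), notes $G\in\psi^{-\alpha/2}\bH_{p,\theta}^\gamma(D,\tau)$ for every $\gamma$ since $G(t,\cdot)$ is a finite $C_c^\infty(D)$-combination, solves the deterministic equation with forcing $-(-\Delta)^{\alpha/2}G$ via Lemma \ref{lem1072321} $(ii)$, and sets $u=v+G$. Your only addition is making explicit the mapping property of $-(-\Delta)^{\alpha/2}$ via Lemma \ref{lem1072135} $(ii)$, which the paper uses implicitly.
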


\begin{proof}
Since $g=(g^1,g^2,\dots) \in \bH_0^\infty(D,\tau,l_2)$, there is $k_0\in\bN$ so that $g^k=0$ for $k>k_0$. For each $k \leq k_0$, we represent the component $g^k$ of $g$ as
\begin{equation*}
    g^k(t,x):= \sum_{j=1}^{k_0}  1_{(\tau_{i-1}^j,\tau_i^j]}(t) g^k_j(x),
\end{equation*}
where $g^k_j\in C_c^\infty(D)$ and $0\leq\tau_0^k\leq \dots \leq \tau_{k_0}^k$ are bounded stopping times.
    Define
    \begin{equation*}
        G(t,x):= \sum_{k=1}^\infty \int_0^t g^k(s,x)dw_s^k = \sum_{k,j=1}^{k_0} g^k_j(x) \left(w_{\tau_i^j}^k - w_{\tau_{i-1}^j}^k\right),
    \end{equation*}
    which belongs to the space $\psi^{-\alpha/2}\bH_{p,\theta}^\gamma(D,\tau)$ for any $\gamma\in\bR$. Thus, by Lemma \ref{lem1072321} $(ii)$ with $-(-\Delta)^{\alpha/2} G$ in place of $f$, there exists a unique solution $v\in \frH_{p,\theta,\alpha}^\gamma(D,\tau)$ to the equation
    \begin{equation*}
        dv = \left(-(-\Delta)^{\alpha/2}v -(-\Delta)^{\alpha/2} G\right) dt, \quad (t,x)\in (0,\tau)\times D.
    \end{equation*}
    Then one can easily check that $u:=v+G$ satisfies
        \begin{align*}
        du &= d(v+G) = \left(-(-\Delta)^{\alpha/2}v -(-\Delta)^{\alpha/2} G\right) dt + \sum_{k=1}^\infty g^k dw_t^k
        \\
        &= -(-\Delta)^{\alpha/2}u dt + \sum_{k=1}^\infty g^k dw_t^k,
    \end{align*}
    which implies that $u$ is the desired solution.
\end{proof}

\begin{remark}
    The solution in Lemma \ref{lem1082302} can be represented as
    \begin{align}
\label{rep}
u(t,x)=\sum_{k=1}^\infty \int_0^t P_{t-s}^Dg^k(s,\cdot)(x) dw_s^k, \quad t\leq \tau.
\end{align}
The desired measurability can be proved as in the proof of Lemma \ref{lem1072321} (cf. \eqref{eq6301523}).
We now prove that $u$ is a solution to \eqref{linear}.
Using stochastic Fubini theorem (see e.g. \cite[Lemma 2.7]{K11}), for $\phi\in C_c^\infty(D)$,
\begin{align*}
&\int_0^t (u(s,\cdot),-(-\Delta)^{\alpha/2}\phi)_D ds \nonumber
\\
&= - \sum_{k=1}^\infty \int_0^t \int_D \int_0^s P^D_{s-u}g^k(u,\cdot) (x) (-\Delta)^{\alpha/2} \phi(x) dw^k_u dxds \nonumber
\\
&= \sum_{k=1}^\infty \int_0^t \int_u^t (P^D_{s-u}g^k(u,\cdot), -(-\Delta)^{\alpha/2}\phi)_D dsdw^k_u \nonumber
\\
&= - \sum_{k=1}^\infty \int_0^t \int_u^t (g^k(u,\cdot), P_{s-u}^D (-\Delta)^{\alpha/2}\phi)_D dsdw^k_u.
\end{align*}
Since $\phi\in C_c^\infty(D)$, the relation $\partial_t P_t^D \phi = - P_t^D(-\Delta)^{\alpha/2}\phi$ (see \cite[Lemma 8.4]{ZZ18}) yields
\begin{align*}
&\sum_{k=1}^\infty \int_0^t \int_u^t (g^k(u,\cdot), -P_{s-u}^D (-\Delta)^{\alpha/2}\phi)_D dsdw^k_u 
\\
&= \sum_{k=1}^\infty \int_0^t \int_u^t (g^k(u,\cdot), \partial_s P_{s-u}^D\phi)_D dsdw^k_u
\\
&=\sum_{k=1}^\infty \int_0^t (g^k(u,\cdot), P_{t-u}^D\phi)_D dw^k_u - \sum_{k=1}^\infty \int_0^t (g^k(u,\cdot), \phi)_D dw^k_u
\\
&=  (u(t,\cdot), \phi)_D - \sum_{k=1}^\infty \int_0^t (g^k(u,\cdot), \phi)_D dw^k_u.
\end{align*}
For the last equality, we apply stochastic Fubini theorem again.  To justify  the integral representation of the solution, we require the uniqueness of the solution, which follows from Lemma \ref{lem1092139} below.  We  note that the representation formula \eqref{rep} is not used elsewhere in the paper.
\end{remark}

In the proof of the following lemma, we say that
$\widetilde{\psi}$ is a regularized distance function on a $C^{1,\tau}$ open set $D$ if
\begin{equation} \label{eq4052240}
  N^{-1}\widetilde{\psi}(x) \leq d_x \leq N\widetilde{\psi}(x), \quad \widetilde{\psi}\in C^{1,\tau}(\overline{D}), \quad |D^2_x\widetilde{\psi}(x)|\leq Nd_x^{\tau-1}.
\end{equation}
For the existence of such functions, we refer the reader to \cite{GH80, KK04}.

\begin{lemma} \label{lem1092139}
  Let $\alpha\in(0,2), \sigma\in(0,1)$, $p\in[2,\infty)$, $T\in(0,\infty)$, and $\tau\leq T$ be a bounded stopping time. 
Assume that $\theta\in(d-1,d-1+p)$ if $D$ is a bounded $C^{1,\sigma}$ convex domain, and $\theta\in(d-\alpha/2,d-\alpha/2+\alpha p/2)$ if $D$ is a bounded $C^{1,\sigma}$ open set. Then for any $u_0\in U_{p,\theta}^\alpha(D)$, $f \in \psi^{-\alpha/2}\bL_{p,\theta}(D,\tau)$, and $g \in \bH_{p,\theta}^{\alpha/2}(D,\tau,l_2)$, there is a unique solution $u$ to \eqref{linear} with $L_t=-(-\Delta)^{\alpha/2}$ and $u_0=0$ such that $u\in \frH_{p,\theta,\alpha}^{\alpha}(D,\tau)$. Moreover, for this solution, we have
\begin{align} \label{eq1091753}
\|u\|_{\frH_{p,\theta,\alpha}^{\alpha}(D,\tau)}\leq N\left( \|u_0\|_{U_{p,\theta}^\alpha(D)} + \|\psi^{\alpha/2}f\|_{\bL_{p,\theta}(D,\tau)} + \|g\|_{\bH_{p,\theta}^{\alpha/2}(D,\tau,l_2)}\right),
\end{align}
where $N=N(d,p,\theta,\gamma,\alpha,\Lambda_0,\Lambda_1,\sigma,D,T)$.
\end{lemma}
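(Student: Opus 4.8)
The plan is to prove all three assertions — the a priori estimate \eqref{eq1091753}, existence, and uniqueness — by assembling the pieces already in hand: the deterministic solvability of Lemma \ref{lem1072321}, the pure-noise solvability of Lemma \ref{lem1082302}, and the higher-order regularity gain of Lemma \ref{lem1081726}. Since $f$, $g$, and $u_0$ enter linearly (and, in this subsection, are independent of $u$), I would seek a solution in the form $u=u_1+u_2+u_3$, where $u_2$ solves the forcing problem $du_2=(-(-\Delta)^{\alpha/2}u_2+f)\,dt$ with zero initial and exterior data, $u_3$ solves the pure-noise problem $du_3=-(-\Delta)^{\alpha/2}u_3\,dt+\sum_k g^k\,dw^k_t$ with zero initial data, and $u_1(t,\cdot)=P_t^D u_0$ carries the initial value. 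The bulk of the work is the a priori estimate, from which both uniqueness and the passage from dense data to general data follow.

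For the a priori estimate I would first reduce to zeroth order. Given a solution $u\in\frH_{p,\theta,\alpha}^{\alpha}(D,\tau)$, the embedding \eqref{eq4250951} gives $u\in\frH_{p,\theta,\alpha}^{0}(D,\tau)$, and the range of $\theta$ assumed here is contained in the range of Lemma \ref{lem1081726}, so that lemma applied with $\mu=0$ and $\gamma=\alpha$ yields
\begin{align*}
\|\psi^{-\alpha/2}u\|_{\bH_{p,\theta}^{\alpha}(D,\tau)}
&\leq N\Big(\|\psi^{-\alpha/2}u\|_{\bL_{p,\theta}(D,\tau)} + \|u_0\|_{U_{p,\theta}^{\alpha}(D)}
\\
&\qquad + \|\psi^{\alpha/2}f\|_{\bL_{p,\theta}(D,\tau)} + \|g\|_{\bH_{p,\theta}^{\alpha/2}(D,\tau,l_2)}\Big).
\end{align*}
It then remains to control the zeroth-order quantity $\|\psi^{-\alpha/2}u\|_{\bL_{p,\theta}(D,\tau)}$ by the data, and this is the crux of the argument and the step I expect to be hardest. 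I would obtain it by applying It\^o's formula to a functional of the form $\int_D |u(t,x)|^p\,\psi^{-\alpha p/2+\theta-d}\,dx$: the nonlocal integration-by-parts formula of \cite{DR24} is used to show that the drift contribution of $-(-\Delta)^{\alpha/2}u$ is, up to lower-order terms, dissipative, the forcing contribution is absorbed by H\"older's and Young's inequalities against $\|\psi^{\alpha/2}f\|_{\bL_{p,\theta}}$, and the martingale term is handled by the Burkholder--Davis--Gundy inequality together with the quadratic-variation bound coming from $g\in\bH_{p,\theta}^{\alpha/2}(D,\tau,l_2)$; a Gronwall argument over $[0,\tau]$ with $\tau\leq T$ then closes the estimate (with $N$ depending on $T$). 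The delicate point is matching the weight $\psi^{-\alpha/2}$ against the boundary blow-up of $(-\Delta)^{\alpha/2}u$, which is exactly what the weighted nonlocal integration by parts is designed to control and which is responsible for the stated restriction on $\theta$.

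Granting the a priori estimate, uniqueness is immediate: the difference of two solutions solves the homogeneous equation with vanishing $u_0$, $f$, and $g$, so \eqref{eq1091753} forces it to vanish in $\frH_{p,\theta,\alpha}^{\alpha}(D,\tau)$. For existence, $u_2$ is produced directly by Lemma \ref{lem1072321} with $\gamma=\alpha$, since $f\in\psi^{-\alpha/2}\bL_{p,\theta}(D,\tau)=\psi^{-\alpha/2}\bH_{p,\theta}^{\alpha-\alpha}(D,\tau)$, together with its estimate. For $u_3$, Lemma \ref{lem1082302} provides a solution in $\frH_{p,\theta,\alpha}^{\alpha}(D,\tau)$ whenever $g\in\bH_0^\infty(D,\tau,l_2)$; taking $g_n\in\bH_0^\infty(D,\tau,l_2)$ with $g_n\to g$ in $\bH_{p,\theta}^{\alpha/2}(D,\tau,l_2)$, the a priori estimate applied to the differences shows that the corresponding solutions form a Cauchy sequence, so by completeness of $\frH_{p,\theta,\alpha}^{\alpha}(D,\tau)$ (Proposition \ref{lem2072238}$(i)$) they converge to a limit which, upon passing to the limit in the distributional identity \eqref{eq3291308}, solves the noise problem for general $g$.

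Finally, the contribution $u_1(t,\cdot)=P_t^D u_0$ of the initial value is handled by the deterministic theory: $U_{p,\theta}^{\alpha}(D)$ is precisely the trace space for which the solution of the homogeneous problem lies in $\frH_{p,\theta,\alpha}^{\alpha}(D,\tau)$ with $\|u_1\|_{\frH_{p,\theta,\alpha}^{\alpha}(D,\tau)}\leq N\|u_0\|_{U_{p,\theta}^{\alpha}(D)}$, which one reads off from the whole-space maximal regularity result \cite[Theorem 4.3]{H21} after the dyadic localization and scaling already carried out in the proof of Lemma \ref{lem1081726}. Summing the three bounds for $u_1$, $u_2$, $u_3$ gives \eqref{eq1091753} and completes the proof.
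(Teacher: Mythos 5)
Your overall architecture coincides with the paper's: reduce to $g\in\bH_0^\infty(D,\tau,l_2)$ by density, get existence by superposing the deterministic solution (Lemma \ref{lem1072321}) and the pure-noise solution (Lemma \ref{lem1082302}) and closing with the a priori estimate plus completeness (Proposition \ref{lem2072238}), and prove the estimate by first invoking Lemma \ref{lem1081726} (with $\mu=0$) to reduce everything to the zeroth-order bound, which is then obtained from a weighted It\^o computation combined with the nonlocal integration-by-parts/coercivity inequality of \cite{DR24}. However, your It\^o functional carries the wrong weight, and taken literally the absorption step fails. If you apply It\^o's formula to $\int_D|u|^p\widetilde{\psi}^{c}\,dx$, the coercivity inequality (4.9) of \cite{DR24} makes the drift control $\int_D|u|^p\widetilde{\psi}^{c-\alpha}dx$, while Young's inequality on the forcing term $\int_D\widetilde{\psi}^{c}|u|^{p-1}|f|\,dx$ (splitting against the dissipative term) produces the $f$-weight $c+\alpha(p-1)$. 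Matching the target $\|\psi^{-\alpha/2}u\|_{L_{p,\theta}}^p=\int_D|u|^p\psi^{\theta-d-\alpha p/2}dx$ and the datum $\|\psi^{\alpha/2}f\|_{L_{p,\theta}}^p=\int_D|f|^p\psi^{\theta-d+\alpha p/2}dx$ forces $c=\theta-d+\alpha-\alpha p/2$, which is exactly the paper's choice. Your choice $c=\theta-d-\alpha p/2$ yields the $f$-weight $\theta-d+\alpha p/2-\alpha$, a strictly stronger norm than $\|\psi^{\alpha/2}f\|_{\bL_{p,\theta}(D,\tau)}$ (recall \eqref{eq5111835}: lowering $\theta$ shrinks the space), so the $f$-term cannot be absorbed and the estimate does not close; the exponent must sit one power of $\alpha$ above the target weight. (Your BDG--Gronwall closing versus the paper's $e^{-\kappa t}$ damping with large $\kappa$ is an inessential difference; also note that, like the paper, you must first approximate $u$ by elements of the set \eqref{eq1091823} via Proposition \ref{lem2072238} and Remark \ref{rem2081915} to justify the pointwise It\^o formula and to localize the stochastic integral, a step you gloss over.)

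The second genuine gap is your treatment of the initial value through $u_1(t,\cdot)=P_t^Du_0$. The assertion that $U_{p,\theta}^{\alpha}(D)$ is ``precisely the trace space'' with $\|P_{\cdot}^Du_0\|_{\frH_{p,\theta,\alpha}^{\alpha}(D,\tau)}\leq N\|u_0\|_{U_{p,\theta}^{\alpha}(D)}$, read off from \cite[Theorem 4.3]{H21} after the localization of Lemma \ref{lem1081726}, is circular as stated: that localization applies to a solution already known to lie in $\frH_{p,\theta,\alpha}^{\mu}(D,\tau)$, because the commutator terms $F_n$ are estimated by the zeroth-order weighted norm $\|\psi^{-\alpha/2}u\|_{\bH_{p,\theta}^{\mu}(D,\tau)}$, which appears on the right-hand side of \eqref{eq1081729}. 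For $P_t^Du_0$ no such weighted bound is available in advance, and no mapping property of the Dirichlet semigroup on $U_{p,\theta}^{\alpha}(D)$ is established anywhere in the paper. The paper sidesteps this entirely: the lemma constructs solutions only with zero initial data (via Lemmas \ref{lem1072321} and \ref{lem1082302}), while the $u_0$-term in \eqref{eq1091753} enters only through the a priori estimate — it appears as $\bE\int_D\widetilde{\psi}^{c}|u_0|^p\,dx$ in the It\^o identity \eqref{eq2082117} and as the initial-data term of the whole-space maximal regularity inside Lemma \ref{lem1081726} — both of which are bounds on a given solution, not a construction. So your decomposition $u=u_1+u_2+u_3$ should be replaced by $u=u_2+u_3$, with $u_0$ carried through the estimates exactly as above.
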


\begin{proof}
As in Case \textbf{1} in proof of Lemma \ref{lem1072321} $(i)$, one just needs to prove the lemma when $g\in \bH_0^\infty(D,\tau,l_2)$.

By Lemmas \ref{lem1072321} and \ref{lem1082302}, there is a solution $u\in \frH_{p,\theta,\alpha}^\alpha(D,\tau)$. Moreover, to prove \eqref{eq1091753}, due to Lemma \ref{lem1081726}, one just needs to show 
\begin{align} \label{eq1091757}
&\|\psi^{-\alpha/2}u\|_{\bL_{p,\theta}(D,\tau)} \nonumber
\\
&\leq N \left( \|\psi^{-\alpha/2+\alpha/p}u_0\|_{L_{p,\theta}(D)} + \|\psi^{\alpha/2}f\|_{\bL_{p,\theta}(D,\tau)} + \|g\|_{\bL_{p,\theta}(D,\tau,l_2)} \right).
\end{align}

By Proposition \ref{lem2072238} $(ii)$, we can take a sequence of functions $u_m$ in the set \eqref{eq1091823} so that $u_m\to u$ in $\frH_{p,\theta,\alpha}^\gamma(D)$.
Then, due to Lemma \ref{lem1072135}, $u_m(0,\cdot)\to u_0$, $\bD u_m + (-\Delta)^{\alpha/2} u_m \to u + (-\Delta)^{\alpha/2} u =f$ and $\bS u_m \to g$ in their corresponding spaces. Hence, we may assume that $u$ is in the set \eqref{eq1091823} (with sufficiently large $n$).
By Remark \ref{rem2081915} $(i)$, we also can assume that $f$ and $g$ are continuous in $x$-variable.
Thus, for given $x\in D$, we can test \eqref{linear} with $\phi^\varepsilon(\cdot-x):= \phi(\frac{\cdot-x}{\varepsilon})$ where $\phi\in C_c^\infty(D)$, and let $\varepsilon\to 0$. Then, for each $x\in D$ we have
\begin{equation*}
u(t,x)= u_0(x) + \int_0^t \left(-(-\Delta)^{\alpha/2} u(s,x)+f(s,x)\right)ds + \sum_{k=1}^\infty\int_0^t g^k(s,x)dw^k_s,
\end{equation*}
for all $t\leq \tau$ (a.s.).

Let $\kappa >0$, $x\in D$, and $\widetilde{\psi}$ be a regularized distance function.  By applying It\^o's formula to $\widetilde{\psi}^ce^{-\kappa t}|u|^p$ with $c=\theta-d+\alpha-\alpha p/2$,
\begin{align} \label{eq2082117}
  e^{-\kappa t} \widetilde{\psi}^c |u(t,x)|^p &=  \widetilde{\psi}^c |u_0(x)|^p \nonumber
  \\
  &\quad+ \int_0^t \Big[ -p e^{-\kappa s} \widetilde{\psi}^c|u|^{p-2}u (-\Delta)^{\alpha/2}u + p e^{-\kappa s} \widetilde{\psi}^c|u|^{p-2}u f \nonumber
  \\
  &\qquad\quad+ \frac{1}{2}p(p-1) e^{-\kappa s} \widetilde{\psi}^c |u|^p |g|_{l_2}^2 -\kappa e^{-\kappa s} \widetilde{\psi}^c |u|^p \Big]  ds \nonumber
  \\
  &\quad+ \sum_{k=1}^\infty\int_{0}^t p e^{-\kappa s} \widetilde{\psi}^c |u|^{p-2}u g^k dw_s^k,
\end{align}
for all $t\leq \tau$ (a.s.).
Since $u$ is in \eqref{eq1091823},
\begin{align*}
    \int_0^t e^{-2\kappa s} \sum_{k=1}^\infty |u|^{2p-2} |g^k|^2 ds \leq N(T,\kappa) \sup_{s\leq t} |u|^{2p-2} \left(\int_0^t |g|_{l_2}^p ds\right)^{2/p} <\infty \text{ (a.s.)}.
\end{align*}
which implies that the stochastic integral in \eqref{eq2082117} is a local martingale. Thus, there is an increasing sequence of stopping times $\tau_n$ such that if we set $t=\tau_n$ in \eqref{eq2082117} and take the expectation, then the stochastic integral vanishes.
 Hence, by integrating with respect to $x$, we have
\begin{align} \label{eq2082242}
  0&\leq \bE \int_{D} \widetilde{\psi}^c |u_0|^p dx \nonumber
  \\
  &\quad+ \bE\int_0^\tau \int_{D} \Big[ -p e^{-\kappa s} \widetilde{\psi}^c|u|^{p-2}u (-\Delta)^{\alpha/2}u + p e^{-\kappa s} \widetilde{\psi}^c|u|^{p-2}u f \nonumber
  \\
  &\qquad\qquad+ \frac{1}{2}p(p-1) e^{-\kappa s} \widetilde{\psi}^c |u|^p |g|_{l_2}^2 -\kappa e^{-\kappa s} \widetilde{\psi}^c |u|^p \Big]  dx ds.
\end{align}
Here, we note that since $u$ in the set \eqref{eq1091823}, we can use Fubini's theorem to interchange the expectation and the $dxds$-integrals.

Since $D$ is bounded, by $(4.9)$ in \cite{DR24}, there exist constants $\delta, N_1>0$ such that
\begin{align*}
  \int_D |u|^{p}\widetilde{\psi}^{c-\alpha} dx &\leq  N_1\int_D -p\widetilde{\psi}^c|u|^{p-2}u (-\Delta)^{\alpha/2}u dx + N \int_{D} |u|^p 1_{d_x>\delta} dx
  \\
  &\leq  N_1\int_D -p\widetilde{\psi}^c|u|^{p-2}u (-\Delta)^{\alpha/2}u dx + N \int_{D} |u|^p \widetilde{\psi}^{c} dx.
\end{align*}
This together with \eqref{eq2082242} yields 
\begin{align*}
&\bE \int_0^\tau \int_D e^{-\kappa T} |u|^{p}\widetilde{\psi}^{c-\alpha} dx ds \leq N \bE \int_{D} \widetilde{\psi}^c |u_0|^p dx
\\
&\qquad+ N \bE \int_0^\tau \int_D \Big[ \widetilde{\psi}^c |u|^{p-1} |f| +\widetilde{\psi}^c |u|^{p-2} |g|_{l_2}^2 + (N-\kappa) |u|^p \tilde{\psi}^{c}  \Big] dx ds.
\end{align*}
By taking sufficiently large $\kappa>0$ and using Young's inequality, we obtain  estimate \eqref{eq1091757}. The lemma is proved.
\end{proof}

\subsection{Proof of Theorem \ref{thm_white}}
We now ready to prove Theorem \ref{thm_white}.

\begin{proof}

\textbf{Step 1.} Assume $L_t=-(-\Delta)^{\alpha/2}$, and $f$ and $g$ are independent of $u$.

When $\gamma\geq\alpha$, the desired result follows from Lemmas \ref{lem1072321} and \ref{lem1092139}.
Thus, we only need to consider the case when $\gamma<\alpha$. Due to the uniqueness result of Lemma \ref{lem1072321}, it suffices to find a solution satisfying the estimate \eqref{eq1092122}.

By Lemma \ref{lem1072321}, we can take a solution $v$ to \eqref{linear} with $(0,f,0)$ instead of $(u_0,f,g)$. Then by considering $u-v$, we may assume that $f=0$ during the proof.

First, we consider the case $\alpha-1\leq \gamma<\alpha$. By \cite[Lemma A.5]{S23}, one can find 
\begin{eqnarray*}
  u_0^i\in U_{p,\theta}^{\gamma+1}(D), \quad g^i\in \bH_{p,\theta}^{\gamma+1-\alpha/2}(D,\tau,l_2),
\end{eqnarray*}
such that
\begin{eqnarray*}
  u_0=u_{0}^0+ \sum_{i=1}^d D_i \left(\psi u_{0}^i\right), \quad g=g^0+ \sum_{i=1}^d D_i \left( \psi g^i\right),
\end{eqnarray*}
and
\begin{eqnarray} \label{eq1092142}
&\|u_0\|_{U_{p,\theta}^{\gamma}(D)} \approx \sum_{i=0}^d \|u_0^i\|_{U_{p,\theta}^{\gamma+1}(D)},& \nonumber
\\
&\|g\|_{\bH_{p,\theta}^{\gamma-\alpha/2}(D,\tau,l_2)} \approx \sum_{i=0}^d\|g^i\|_{\bH_{p,\theta}^{\gamma+1-\alpha/2}(D,\tau,l_2)}.&
\end{eqnarray}
Since $\gamma+1\geq\alpha$, there is a unique solution $v^i$ to \eqref{linear} with $(u_0^i,0,g^i)$ instead of $(u_0,f,g)$, and it satisfies
\begin{align}
\|v^i\|_{\frH_{p,\theta,\alpha}^{\gamma+1}(D,\tau)}\leq N \left(\|u_0^i\|_{U_{p,\theta}^{\gamma+1}(D)} + \|g^i\|_{\bH_{p,\theta}^{\gamma+1-\alpha/2}(D,\tau,l_2)}\right).
\end{align}
If we denote $v:=v_0+ \sum_{i=1}^d D_i \left(\psi v_i\right)$, then $v \in \frH_{p,\theta,\alpha}^{\gamma}(D,\tau)$ and it is a solution to
\begin{equation*}
\begin{cases}
dv=\left(-(-\Delta)^{\alpha/2}v+h\right)dt + \sum_{k=1}^\infty g^k dw^k_t,\quad &(t,x)\in(0,\tau)\times D,
\\
v(0,\cdot)=u_0,\quad & x\in D,
\\
v(t,\cdot)=0,\quad &(t,x)\in [0,\tau]\times D^c,
\end{cases}
\end{equation*}
where $h=\sum_{i=1}^d D_i\left(\psi(-\Delta)^{\alpha/2}v_i\right) + \sum_{i=1}^d (-\Delta)^{\alpha/2}\left(D_i(\psi v_i)\right)$ in $(0,\tau)\times D$.
By \cite[Theorem 4.1]{L00}, one can see that $D_i\psi$ is a bounded operator from $\psi^{-\alpha/2}\bH_{p,\theta}^{\gamma+1-\alpha}(D,\tau)$ to $\psi^{-\alpha/2}\bH_{p,\theta}^{\gamma-\alpha}(D,\tau)$.
This together with Lemma \ref{lem1072135} leads to $h\in \psi^{-\alpha/2}\bH_{p,\theta}^{\gamma-\alpha}(D,\tau)$. Thus, by Lemma \ref{lem1072321}, there exists a unique solution $w$ to
\begin{equation*}
\begin{cases}
dw=\left(-(-\Delta)^{\alpha/2}w+h\right)dt,\quad &(t,x)\in(0,\tau)\times D,
\\
u(0,\cdot)=0,\quad & x\in D,
\\
u(t,\cdot)=0,\quad &(t,x)\in [0,\tau]\times D^c.
\end{cases}
\end{equation*}
Moreover, $w$ satisfies 
\begin{align} \label{eq1092143}
\|w\|_{\frH_{p,\theta,\alpha}^{\gamma}(D,\tau)} &\leq N \|\psi^{\alpha/2} h\|_{\bH_{p,\theta}^{\gamma-\alpha}(D,\tau)} \nonumber
\\
&\leq N \sum_{i=1}^d \|\psi^{-\alpha/2} v_i \|_{\bH_{p,\theta}^{\gamma+1-\alpha}(D,\tau)} \leq N \|\psi^{-\alpha/2} v_i \|_{\bH_{p,\theta}^{\gamma+1-\alpha}(D,\tau)}.
\end{align}
Therefore, $u:=v-w$ becomes a solution to \eqref{linear} with $f=0$, and \eqref{eq1092122} follows from \eqref{eq1092142}--\eqref{eq1092143}.

Now it remains to consider the case when $\gamma<\alpha-1$.  To do this, one needs to repeat the previous argument for the range $\gamma-n-1\leq \gamma < \alpha-n$ for $n=1,2,\dots$, going step by step for $n=1,2,\cdots$.

\textbf{Step 2.}  We prove the theorem for general  $L_t$, assuming that $f$ and $g$ are independent of $u$.

First, we  prove that  the a priori estimate \eqref{eq1092122} holds given that  $u\in \frH_{p,\theta,\alpha}^\gamma(D,\tau)$ is a solution to \eqref{linear}. By Step 1,  there is unique solution $v\in \frH_{p,\theta,\alpha}^\gamma(D,\tau)$ to equation 
\begin{equation*}
\begin{cases}
dv=\left(-(-\Delta)^{\alpha/2}v+f\right)dt + \sum_{k=1}^\infty g^kdw^k_t,\quad &(t,x)\in(0,\tau)\times D,
\\
v(0,\cdot)=u_0,\quad & x\in D,
\\
v(t,\cdot)=0,\quad &(t,x)\in [0,\tau]\times D^c
\end{cases}
\end{equation*}
Moreover, we also have
\begin{align} \label{eq1092200}
\|v\|_{\frH_{p,\theta,\alpha}^{\gamma}(D,\tau)}\leq N \left(\|u_0\|_{U_{p,\theta}^{\gamma}(D)} + \|\psi^{\alpha/2} f\|_{\bH_{p,\theta}^{\gamma-\alpha}(D,\tau)} + \|g\|_{\bH_{p,\theta}^{\gamma-\alpha/2}(D,\tau,l_2)}\right).
\end{align}
Then $w:=u-v \in \frH_{p,\theta,\alpha}^\gamma(D,\tau)$ and it satisfies
\begin{equation*}
\begin{cases}
dw=\left(L_tw + (L_tv + (-\Delta)^{\alpha/2}v) \right)dt,\quad &(t,x)\in(0,\tau)\times D,
\\
v(0,x)=0,\quad & x\in D,
\\
v(t,x)=0,\quad &(t,x)\in [0,\tau]\times D^c.
\end{cases}
\end{equation*}
By Lemma \ref{lem1072135},  we have $L_tv + (-\Delta)^{\alpha/2}v\in \psi^{-\alpha/2}\bH_{p,\theta}^{\gamma-\alpha}(D,\tau)$, and therefore using Lemma \ref{lem1072321} we get
\begin{align} \label{eq1092201}
  \|w\|_{\frH_{p,\theta,\alpha}^{\gamma}(D,\tau)}\leq N \|\psi^{-\alpha/2}v\|_{\bH_{p,\theta}^{\gamma}(D,\tau)}.
\end{align}
Since $u=v+w$, the desired estimate for $u$ follows from \eqref{eq1092200} and \eqref{eq1092201}.

As explained in the proof of Lemma \ref{lem1072321}, the a priori estimate \eqref{eq1092122} holds for any operator  $L^{\lambda}_t$ with $\lambda\in [0,1]$, and the constant  $N$ is independent of $\lambda$. Therefore, by the method of continuity, it is sufficient to prove the solvability of the problem when $\lambda=1$, i.e., when $L_t=-(-\Delta)^{\alpha/2}$.  This case has already been addressed in Step 1.  Thus, the theorem is proved under the assumption that $f$ and $g$ are independent of $u$.

\textbf{Step 3.} We prove the theorem for the semi-linear equation  when $\tau=T$ is a constant.

We first prove the existence result. By the result of Step 2, we can define 
$u^0 \in \frH_{p,\theta,\alpha}^{\gamma}(D,T)$ as  the solution to \eqref{linear} with the inhomogeneous terms $f(0)$ and $g(0)$, which are independent of $u$.
Similarly, for  $n\geq0$,   we  define $u^{n+1}$ inductively as the solution of
\begin{equation*}
\begin{cases}
du^{n+1}=\left(L_tu^{n+1}+f(u^n)\right)dt + \sum_{k=1}^\infty g^k(u^n) dw_t^k,\quad &(t,x)\in(0,T]\times D,
\\
u^{n+1}(0,\cdot)=u_0,\quad & x\in D,
\\
u^{n+1}(t,\cdot)=0,\quad &(t,x)\in [0,T]\times D^c.
\end{cases}
\end{equation*}
Then we get $(u^{n+1}-u^n)(0,\cdot)=0$ and
\begin{align*}
&d(u^{n+1}-u^n)
\\
&=\left(L_t(u^{n+1}-u^n)+ f(u^{n})-f(u^{n-1})\right)dt + \sum_{k=1}^\infty\left(g^k(u^n)-g^k(u^{n-1})\right)dw_t^k.
\end{align*}
Here, we denote $u^{-1}:=0$.
Then by the result of Step 2 and Assumption \ref{ass_semi}, for each $t\leq T$,
\begin{align}
&\|u^{n+1} - u^n\|^p_{\frH_{p,\theta,\alpha}^{\gamma}(D,t)} \nonumber
\\
&\leq N\left( \|f(u^n)-f(u^{n-1})\|^p_{\bH_{p,\theta+\alpha p/2}^{\gamma-\alpha}(D,t)}+ \| g(u^n)-g(u^{n-1})\|^p_{\bH_{p,\theta}^{\gamma-\alpha/2}(D,t,l_2)} \right) \nonumber
\\
&\leq N \varepsilon \|u^n-u^{n-1}\|^p_{\bH_{p,\theta-\alpha p/2}^{\gamma}(D,t)} + N_\varepsilon \|u^n-u^{n-1}\|^p_{\bH_{p,\theta}^{\gamma-\alpha/2} (D,t)} \nonumber
\\
&\leq N\varepsilon \|u^n-u^{n-1}\|^p_{\frH_{p,\theta,\alpha}^{\gamma}(D,t)} + N_\varepsilon \bE\left[\int_0^t \|u^n-u^{n-1}\|_{\frH_{p,\theta,\alpha}^{\gamma}(D,s)}^p ds\right], \label{gronwell}
\end{align}
where $N_\varepsilon$ depends also on $\varepsilon$, but it is independent of $n$.
For the last inequality above, we used
\begin{align*}
\|u\|_{\bH_{p,\theta}^{\gamma-\alpha/2}(D,t)}^p \leq \int_0^t \left( \bE \sup_{r\leq s} \|u(r,\cdot)\|_{H_{p,\theta}^{\gamma-\alpha/2}(D)}^p \right) ds \leq N \int_0^t \|u\|_{\frH_{p,\theta,\alpha}^{\gamma}(D,s)}^p ds,
\end{align*}
which is due to \eqref{embed_sup}.
By the induction, for any $t\leq T$,
\begin{align*}
&\|u^{n+1}-u^n\|^p_{\frH_{p,\theta,\alpha}^{\gamma}(D,t)} \leq (N\varepsilon)^n \|u^1-u^0\|^p_{\frH_{p,\theta,\alpha}^{\gamma}(D,t)} 
\\
&\quad+ \sum_{k=1}^n \binom{n}{k} (N\varepsilon)^{n-k} N_\varepsilon^k \bE\left[\int_0^t \frac{(t-s)^{k-1}}{(k-1)!} \|u^1-u^0\|_{\frH_{p,\theta,\alpha}^{\gamma}(D,s)}^p ds\right],
\end{align*}
where $\binom{n}{k}$ denotes the standard binomial coefficient.
Therefore,
\begin{align*}
\|u^{n+1}-u^{n}\|^p_{\frH_{p,\theta,\alpha}^{\gamma}(D,T)}  &\leq \left((2N\varepsilon)^n \sup_{k\geq0} \frac{(N_\varepsilon T/\varepsilon)^{k}}{k!} \right) \|u^1-u^0\|_{\frH_{p,\theta,\alpha}^{\gamma}(D,T)}^p.
\end{align*}
Let us choose a sufficiently small $\varepsilon>0$ such that $2N\varepsilon<1/2$.
Since the supremum above is finite, we have
\begin{equation*}
    \sum_{n=1}^\infty \|u^{n+1}-u^{n}\|^p_{\frH_{p,\theta,\alpha}^{\gamma}(D,T)}<\infty,
\end{equation*}
which implies that $u^n$ is a Cauchy  sequence in $\frH_{p,\theta,\alpha}^{\gamma}(D,T)$. One can easily show that its limit, say $u$, is a solution to \eqref{linear}.

Next, we prove the uniqueness of solution.
 Let $u$ and $v$ be solutions to \eqref{linear}. Then $w:=u-v$ satisfies
\begin{equation*}
\begin{cases}
dw=\left(L_tw+f(u)-f(v)\right)dt + \sum_{k=1}^\infty \left(g^k(u) -g^k(v)\right) dw_t^k,\, &(t,x)\in(0,T)\times D,
\\
w(0,\cdot)=0,\, & x\in D,
\\
w(t,\cdot)=0,\, &(t,x)\in [0,T]\times D^c.
\end{cases}
\end{equation*}
Then, following arguments used to prove \eqref{gronwell}, we get  for $\varepsilon>0$ and  $t\leq T$,
\begin{align*}
    \|w\|^p_{\frH_{p,\theta,\alpha}^{\gamma}(D,t)} \leq N\varepsilon \|w\|^p_{\frH_{p,\theta,\alpha}^{\gamma}(D,t)} + N_\varepsilon \int_0^t \|w\|_{\frH_{p,\theta,\alpha}^{\gamma}(D,s)}^p ds.
\end{align*}
 By taking sufficiently small $\varepsilon$ and applying Gronwall's lemma, we see that $w=0$, which proves the uniqueness result.  

Lastly, we prove estimate \eqref{eq1092122}. 
Let $u\in \frH^{\gamma}_{p,\theta,\alpha}(D,T)$ be the solution to \eqref{linear}, and define $u^0\in  \frH^{\gamma}_{p,\theta,\alpha}(D,T)$ as above.  Then by  the arguments used to prove \eqref{gronwell},  for any $t\leq T$
\begin{align*}
    &\|u-u^{0}\|^p_{\frH_{p,\theta,\alpha}^{\gamma}(D,t)} \nonumber
    \\
    &\leq N \left( \|f(u)-f(0)\|_{\bH_{p,\theta+\alpha p/2}^{\gamma-\alpha}(D,t)} + \|g(u)-g(0)\|_{\bH_{p,\theta}^{\gamma-\alpha/2}(D,t,l_2)}\right) \nonumber
    \\
&\leq N \varepsilon \|u\|^p_{\bH_{p,\theta-\alpha p/2}^{\gamma}(D,t)} + N_\varepsilon \|u\|^p_{\bH_{p,\theta}^{\gamma-\alpha/2} (D,t)} \nonumber
\\
&\leq N\varepsilon \|u\|^p_{\frH_{p,\theta,\alpha}^{\gamma}(D,t)} + N_\varepsilon \int_0^t \|u\|_{\frH_{p,\theta,\alpha}^{\gamma}(D,s)}^p ds.
\end{align*}
This together with the estimate for $u^0$ obtained in Step 2 yields 
\begin{align*}
\|u\|^p_{\frH_{p,\theta,\alpha}^{\gamma}(D,t)} 
&\leq  N \int_0^t \|u\|^p_{\frH_{p,\theta,\alpha}^{\gamma}(D,s)} ds
\\
&\quad+ N \|f(0)\|^p_{\bH_{p,\theta+\alpha p/2}^{\gamma-\alpha}(D,T)} + N\|g(0)\|^p_{\bH_{p,\theta}^{\gamma-\alpha/2}(D,T,l_2)},
\end{align*}
for any $t\leq T$.
Applying Gronwall's lemma, we obtain the desired estimate.

\textbf{Step 4.} We prove the therem for general $\tau\leq T$. 
We first  prove the existence  result  and estimate \eqref{eq1092122}. 
Denote
\begin{equation*}
    \bar{f}(t,x,u):=1_{t\leq \tau}f(t,x,u), \quad \bar{g}(t,x,u):=1_{t\leq \tau}g(t,x,u).
\end{equation*}
Then one can easily check that $\bar{f}$ and $\bar{g}$ still satisfy Assumption \ref{ass_semi} with $\tau=T$. Therefore, by Step 3, there exists a solution $u\in \frH^{\gamma}_{p,\theta,\alpha}(D,T)$ to equation \eqref{linear} with $\bar{f}$ and $\bar{g}$ in place of $f$ and $g$, respectively. Since $\bar{f}(u)=f(u)$ and $\bar{g}(u)=g(u)$ for $t\leq \tau$, we conclude that    $u\in \frH^{\gamma}_{p,\theta,\alpha}(D,\tau)$ is a solution to equation \eqref{linear}. The estimate for $u$ also follows from the relation 
$
 \|u\|_{ \frH^{\gamma}_{p,\theta,\alpha}(D,\tau)} \leq  \|u\|_{\frH^{\gamma}_{p,\theta,\alpha}(D,T)}$.

Next we prove the uniqueness result. 
Suppose $u\in \frH_{p,\theta,\alpha}^\gamma(D,\tau)$ is a solution to \eqref{linear}.
 Then, by Step 2,  there is  a solution $\tilde{u}\in \frH_{p,\theta,\alpha}^\gamma(D,T)$ to 
\begin{equation} \label{eq2101309}
\begin{cases}
d\tilde{u}=\left(L_t\tilde{u}+\bar{f}(u)\right)dt + \sum_{k=1}^\infty \bar{g}^k(u) dw_t^k,\quad &(t,x)\in(0,T)\times D,
\\
\tilde{u}(0,\cdot)=u_0,\quad & x\in D,
\\
\tilde{u}(t,\cdot)=0,\quad &(t,x)\in [0,T]\times D^c.
\end{cases}
\end{equation}
Note that in above equation we have $\bar{f}(u)$ and $\bar{g}(u)$, not $\bar{f}(\tilde{u})$ and $\bar{g}(\tilde{u})$. Thus,  $u-\tilde{u}$ satisfies $(u-\tilde{u})(0,\cdot)=0$, and
\begin{equation*}
    d(u-\tilde{u})=L_t(u-\tilde{u}) dt
\end{equation*}
in $(0,\tau)\times D$. The uniqueness result in Lemma \ref{lem1072321} yields that  $u=\tilde{u}$ in $(0,\tau)$. Therefore, in equation \eqref{eq2101309}, we can replace $\bar{f}(u)$ and $\bar{g}(u)$ by $\bar{f}(\tilde{u})$ and $\bar{g}(\tilde{u})$, respectively. This means that  $\tilde{u}$ is a solution to the semi-linear equation \eqref{linear} in $\frH_{p,\theta,\alpha}^\gamma(D,T)$. Therefore, the uniqueness result for general $\tau$ follows from the case $\tau=T$. The theorem is proved.
\end{proof}

\section{Proof of Theorem \ref{mainthm}} 
\label{sec_colored}

We first recall Definition \ref{def_solW}, which shows that equation \eqref{eq_colored} can be rewritten as follows:
\begin{equation} \label{eq3171657}
\begin{cases}
du=(L_t u + f(u))dt + \sum_{k=1}^\infty \xi h(u)(\Pi*e_k) dw^k_t, \quad &(t,x)\in(0,\tau)\times D,
\\
u(0,x)=u_0,\quad & x\in D,
\\
u(t,x)=0,\quad &(t,x)\in[0,\tau]\times D^c.
\end{cases}
\end{equation}
Here, in the case when $\cH$ is a finite dimensional space, we set $e_k=0$ for all sufficiently large $k$.

Since \eqref{eq_colored} is equivalent to \eqref{eq3171657},  our approach to proving  Theorem \ref{mainthm} is to define  $g^k(u):=\xi h(u)(\Pi*e_k)$ and verify that  the functions $f$ and $g$ satisfy Assumption \ref{ass_semi}. Once this is established,   Theorem \ref{thm_white} can be applied.

To carry out this verification, we rely on sharp estimates for the convolution  $(R_{\alpha/2-\gamma}^{s/(s-1)}*R_{\alpha/2-\gamma}^{s/(s-1)})(x)$, where  $R_{\alpha/2-\gamma}$ is the kernel of the operator $(1-\Delta)^{(\gamma-\alpha/2)/2}$, which will be described below. 

 Observe that for any $\varphi\in\cS(\bR^d)$,
\begin{align} \label{eq1102244}
\cF_d\{(1-\Delta)^{-\beta/2}\varphi\}(\xi)&=(1+|\xi|^{2})^{-\beta/2}\cF_d\{\varphi\}(\xi) \nonumber
\\
&=c \cF_d\{\varphi\}(\xi) \int_{0}^{\infty}t^{\beta/2}e^{-t}e^{-|\xi|^{2}t}\frac{1}{t}dt,
\end{align}
where $c=c(\gamma)>0$. Define
\begin{align*}
R_{\beta}(x):=\int_{0}^{\infty}t^{\beta/2-1}e^{-t}q(t,x) dt,
\end{align*}
where $q(t,x):=\cF^{-1}_d(e^{-|\cdot|^2 t})(x)$ is the Green's function for the operator $\partial_t-\Delta$ in $\bR^{d+1}$.
Obviously, we have
\begin{equation*}
\int_{\bR^d} R_{\beta}(x)dx =\int_{0}^{\infty}t^{\beta/2-1}e^{-t}\int_{\bR^{d}} q(t,x)dxdt = \int_{0}^{\infty} t^{\beta/2-1}e^{-t} dt < \infty.
\end{equation*}
This yields that we can apply Fubini's theorem to get
$$
\cF_d \left\{ R_{\beta} \right\}(\xi)=N(\gamma,d) \int_{0}^{\infty} t^{\beta/2-1} e^{-t}e^{-|\xi|^{2}t} dt.
$$
 Hence, due to \eqref{eq1102244} for any $\beta>0$ we have 
\begin{equation} \label{negative repre}
(1-\Delta)^{-\beta/2}\varphi=N(\beta,d)\int_{\bR^{d}}R_{\beta}(x-y)\varphi(y)dy.
\end{equation}
 
Below we state the sharp bound of $|R_{\alpha/2-\gamma}^{s/(s-1)}*R_{\alpha/2-\gamma}^{s/(s-1)}(x)|^{(s-1)/s}$, which is presented in the proof of \cite[Proposition 1.7]{CHP24}. We define
       \begin{equation*}
    H_{s,\gamma}(x) :=
    \begin{cases}
     |x|^{\alpha-2\gamma-\frac{d}{s}-d} \quad &\text{if } \alpha/2-\gamma-\frac{d}{2s} \in (0,d/2),
    \\
    |\log|x|| \quad &\text{if } \alpha/2-\gamma-\frac{d}{2s} = d/2,
    \\
    1 \quad &\text{if } \alpha/2-\gamma-\frac{d}{2s} > d/2.
\end{cases}
\end{equation*}

\begin{lemma}
    Let $s\in(1,\infty]$ and $\gamma\in(0,\alpha/2)$ such that $\alpha/2-\gamma-\frac{d}{2s} \in (0,d)$. Then there exist $N=N(s,\alpha,\gamma,d)>0$ and $c=c(s,\alpha,\gamma,d)>0$ such that
\begin{equation} \label{eq4032308}
    |R_{\alpha/2-\gamma}^{s/(s-1)}*R_{\alpha/2-\gamma}^{s/(s-1)})(x)|^{(s-1)/s} \leq NH_{s,\gamma}(x)e^{-c|x|}.
\end{equation}
\end{lemma}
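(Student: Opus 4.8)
The plan is to derive \eqref{eq4032308} from the sharp two-sided pointwise bounds for the Bessel kernel $R_\beta$ with $\beta:=\alpha/2-\gamma\in(0,1)\subset(0,d)$, and to estimate the self-convolution by separating a neighbourhood of the two singularities from the exponentially small tail. Writing $s':=s/(s-1)$ for the conjugate exponent, so that the outer power in the statement is $1/s'=(s-1)/s$, I would first record the classical estimates $R_\beta(x)\le N|x|^{\beta-d}1_{|x|\le1}+Ne^{-c_0|x|}1_{|x|>1}$ for some $c_0\in(0,1)$: near the origin this is the Riesz asymptotic $R_\beta(x)\approx|x|^{\beta-d}$ of the Bessel kernel, while for $|x|>1$ it follows from Laplace's method applied to the representation $R_\beta(x)=\int_0^\infty t^{\beta/2-1}e^{-t}q(t,x)\,dt$, the genuine tail $|x|^{(\beta-d-1)/2}e^{-|x|}$ being absorbed into $Ne^{-c_0|x|}$ since $\beta<d+1$. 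Raising to the power $s'$ then gives $R_\beta^{s'}(x)\le N|x|^{s'(\beta-d)}1_{|x|\le1}+Ne^{-c_1|x|}1_{|x|>1}$ with $c_1:=c_0s'$.

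For the contribution near the origin I would invoke the standard composition (fractional-integration) estimate for power kernels: with $a:=s'(d-\beta)$, so that $R_\beta^{s'}(y)\approx|y|^{-a}$ near $0$, one has for $|x|\le1$ that $\int_{|y|\le1,\,|x-y|\le1}|y|^{-a}|x-y|^{-a}\,dy$ is bounded by $N|x|^{d-2a}$ when $2a>d$, by $N(1+|\log|x||)$ when $2a=d$, and by $N$ when $2a<d$, provided $a<d$ so that each of the two singularities is locally integrable. A direct computation gives $(d-2a)/s'=\alpha-2\gamma-\frac{d}{s}-d$ and shows that the trichotomy $2a\gtrless d$ is precisely $\alpha/2-\gamma-\frac{d}{2s}\gtrless d/2$; hence, after taking the power $1/s'$, the near-origin part is dominated by $N\,H_{s,\gamma}(x)$, which is exactly the source of the three cases in the definition of $H_{s,\gamma}$. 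The assumption $\alpha/2-\gamma-\frac{d}{2s}\in(0,d)$ is used here to keep the singularity exponent within the range covered by the composition estimate and to ensure the convolution is finite for $x\neq0$.

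The exponentially decaying factor is produced on the remaining region by the triangle inequality $|y|+|x-y|\ge|x|$: for any $c<c_1$ one has $e^{-c_1|y|}e^{-c_1|x-y|}\le e^{-c|x|}e^{-(c_1-c)(|y|+|x-y|)}$, and $\int_{\bR^d}e^{-(c_1-c)(|y|+|x-y|)}\,dy$ is bounded uniformly in $x$; the mixed regions, in which one factor is the integrable singularity $|\cdot|^{-a}$ and the other the exponential tail, are treated in the same way. Collecting the pieces yields $R_\beta^{s'}*R_\beta^{s'}(x)\le N\,H_{s,\gamma}(x)^{s'}e^{-cs'|x|}$ once one checks that for $|x|\ge1$ the polynomial or logarithmic growth hidden in $H_{s,\gamma}$ is swallowed by the exponential. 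Taking the power $1/s'=(s-1)/s$ gives \eqref{eq4032308}.

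The main obstacle is the bookkeeping that simultaneously produces the correct singular behaviour at $x=0$ and preserves an exponential rate at infinity: one must track the threshold case $2a=d$ responsible for the logarithmic term, verify that the composition exponent matches $H_{s,\gamma}$ in all three regimes, and choose $c$ strictly below the natural rate $c_1$ so that every polynomial and logarithmic prefactor is absorbed. This last point is what permits the single clean bound $H_{s,\gamma}(x)e^{-c|x|}$ rather than a region-dependent estimate.
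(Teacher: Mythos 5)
Your route is genuinely different from the paper's: the paper does not prove \eqref{eq4032308} from scratch but simply imports it from \cite[Proposition 1.7]{CHP24}, quoting the exponential bound for $|x|\geq1$ and the two-sided comparison with $H_{s,\gamma}$ for $|x|<1$, and gluing them. What you wrote is essentially the computation underlying that cited result: pointwise Bessel-kernel bounds $R_\beta(x)\le N|x|^{\beta-d}1_{|x|\le1}+Ne^{-c_0|x|}1_{|x|>1}$ with $\beta=\alpha/2-\gamma\in(0,1)$, the composition estimate for power kernels near the two singularities, and the triangle-inequality trick for the tails. Your exponent bookkeeping is correct: with $a=s'(d-\beta)$, $s'=s/(s-1)$, one has $(d-2a)/s'=\alpha-2\gamma-\frac{d}{s}-d$, the trichotomy $2a\gtrless d$ is exactly $\alpha/2-\gamma-\frac{d}{2s}\lessgtr d/2$, and shrinking the exponential rate absorbs the logarithmic factor in the threshold case. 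As a self-contained argument this buys transparency that the paper's citation does not.

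There is, however, one concrete false assertion: you claim the hypothesis $\alpha/2-\gamma-\frac{d}{2s}\in(0,d)$ keeps the composition estimate applicable and ``ensures the convolution is finite for $x\neq0$.'' It does not when $s<\infty$. Local integrability of $R_\beta^{s'}$ requires $a<d$, which is equivalent to $\beta>d/s$, i.e.\ to $\alpha/2-\gamma-\frac{d}{2s}>\frac{d}{2s}$, whereas the stated hypothesis only gives the lower bound $0$. On the nonempty sub-range $0<\alpha/2-\gamma-\frac{d}{2s}\le\frac{d}{2s}$ (for instance $d=1$, $s=2$, $\beta=2/5$, so $a=6/5>d$) one has $R_\beta^{s'}(y)\approx|y|^{-a}$ near the origin with $a\ge d$, and then for every fixed $x\neq0$, choosing $\varepsilon<|x|/2$, $\bigl(R_\beta^{s'}*R_\beta^{s'}\bigr)(x)\ge N|x|^{-a}\int_{|y|\le\varepsilon}|y|^{-a}\,dy=+\infty$; the same divergence breaks your mixed singular-times-exponential regions, which also use $\int_{|y|\le1}|y|^{-a}\,dy<\infty$. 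So your proof is complete only under the additional constraint $\alpha/2-\gamma-\frac{d}{2s}>\frac{d}{2s}$, which is automatic for $s=\infty$ (where $s'=1$, $a=d-\beta<d$) but not for finite $s$. Note that this is not merely a flaw of your write-up: on that sub-range the left-hand side of \eqref{eq4032308} is identically infinite, so the estimate cannot hold as stated, and the cited proposition must implicitly carry this restriction. The paper's proof, being a citation, never confronts the point; your direct argument exposes it precisely at the step ``provided $a<d$,'' and that check needs to be done honestly rather than attributed to the stated hypothesis.
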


\begin{proof}
    In \cite[Proposition 1.7]{CHP24}, the estimate was derived by considering two separate cases  $|x|<1$ and $|x|\geq1$; there exists $c_0=c_0(s,\alpha,\gamma,d)>0$ such that
        \begin{equation} \label{eq3232140}
    |R_{\alpha/2-\gamma}^{s/(s-1)}*R_{\alpha/2-\gamma}^{s/(s-1)}(x)|^{(s-1)/s} \leq N e^{-c_0|x|}, \quad |x|\geq 1,
\end{equation}
and
    \begin{equation}
        N^{-1} H_{s,\gamma}(x) \leq |R_{\alpha/2-\gamma}^{s/(s-1)}*R_{\alpha/2-\gamma}^{s/(s-1)}(x)|^{(s-1)/s} \leq N H_{s,\gamma}(x), \quad |x|<1. \label{equiv}
    \end{equation}
Combining these two bounds yields the desired estimate, completing the proof of the lemma.
\end{proof}

We now state an equivalent condition of \eqref{eq3071615}.

\begin{lemma}
    Let $s\in(1,\infty]$ and $\gamma\in(0,\alpha/2)$ such that $\alpha/2-\gamma-\frac{d}{2s} \in (0,d)$. Assume that $\Pi(dx)$ be a nonnegative and nonnegative definite tempered measure on $\bR^d$. Then \eqref{eq3071615} is equivalent to
    \begin{equation*}
        \int_{\bR^d} |R_{\alpha/2-\gamma}^{s/(s-1)}*R_{\alpha/2-\gamma}^{s/(s-1)}(x)|^{(s-1)/s} \, \Pi(dx) <\infty.
    \end{equation*}
\end{lemma}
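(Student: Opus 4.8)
The plan is to write $\int_{\bR^d}K(x)\,\Pi(dx)$, where I abbreviate the integrand as $K(x):=|R_{\alpha/2-\gamma}^{s/(s-1)}*R_{\alpha/2-\gamma}^{s/(s-1)}(x)|^{(s-1)/s}$, as the sum of its contributions over $\{|x|\geq 1\}$ and $\{|x|<1\}$. I would then show that the tail contribution is always finite (using only the temperedness of $\Pi$), while the contribution near the origin is finite precisely when \eqref{eq3071615} holds. Since the sharp kernel bounds \eqref{eq3232140} and \eqref{equiv} are already in hand, the whole argument is essentially bookkeeping.

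First I would dispose of the tail. On $\{|x|\geq 1\}$ the bound \eqref{eq3232140} gives $K(x)\leq Ne^{-c_0|x|}$. Since $x\mapsto e^{-c_0|x|}(1+|x|^k)$ is bounded on $\bR^d$ (exponential decay dominates polynomial growth), we have $e^{-c_0|x|}\leq N(1+|x|^k)^{-1}$ with $k$ as in \eqref{eq3101709}, whence
\[
\int_{|x|\geq 1} K(x)\,\Pi(dx)\leq N\int_{\bR^d}\frac{1}{1+|x|^k}\,\Pi(dx)<\infty
\]
by the temperedness of $\Pi$. Thus the tail is finite regardless of which case of \eqref{eq3071615} is in force, and it plays no role in the equivalence.

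Next I would analyze the region $\{|x|<1\}$ using the two-sided estimate \eqref{equiv}, namely $N^{-1}H_{s,\gamma}(x)\leq K(x)\leq NH_{s,\gamma}(x)$, which immediately yields
\[
\int_{|x|<1}K(x)\,\Pi(dx)<\infty \iff \int_{|x|<1}H_{s,\gamma}(x)\,\Pi(dx)<\infty .
\]
Inspecting the three cases in the definition of $H_{s,\gamma}$, and using $\log(1/|x|)=|\log|x||$ for $|x|<1$, the right-hand condition is exactly \eqref{eq3071615}; in the case $\alpha/2-\gamma-d/2s>d/2$ one has $H_{s,\gamma}\equiv 1$, so both sides reduce to $\int_{|x|<1}\Pi(dx)$, which is automatically finite by Remark \ref{rem5111836}. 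Combining the tail and near-origin analyses, $\int_{\bR^d}K(x)\,\Pi(dx)$ is finite if and only if \eqref{eq3071615} holds, which is the claim. The only mildly nonroutine point is the elementary domination $e^{-c_0|x|}\leq N(1+|x|^k)^{-1}$ that reduces the tail to \eqref{eq3101709}; all the genuine analytic content already resides in the sharp bounds \eqref{eq3232140} and \eqref{equiv}, so I do not expect a real obstacle here.
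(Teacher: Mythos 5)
Your proof is correct and follows essentially the same route as the paper: split the integral at $|x|=1$, use the exponential bound \eqref{eq3232140} together with the temperedness condition \eqref{eq3101709} to kill the tail, and invoke the two-sided estimate \eqref{equiv} to reduce the near-origin part to \eqref{eq3071615}. Your explicit domination $e^{-c_0|x|}\leq N(1+|x|^k)^{-1}$ and the case-by-case check of $H_{s,\gamma}$ simply spell out details the paper leaves implicit.
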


\begin{proof}
    Let us denote
    \begin{align*}
       I_1 + I_2 &:= \int_{|x|<1} |R_{\alpha/2-\gamma}^{s/(s-1)}*R_{\alpha/2-\gamma}^{s/(s-1)}(x)|^{(s-1)/s} \, \Pi(dx)
       \\
       &\quad + \int_{|x|\geq 1} |R_{\alpha/2-\gamma}^{s/(s-1)}*R_{\alpha/2-\gamma}^{s/(s-1)}(x)|^{(s-1)/s} \, \Pi(dx).
    \end{align*}
By \eqref{eq3101709} and \eqref{eq3232140}, we always have $I_2<\infty$.  Moreover, \eqref{equiv} certainly shows that  $I_1<\infty$ is equivalent to condition \eqref{eq3071615}. This completes the proof of the lemma.
\end{proof}

\begin{lemma}
    Let $s\in(1,\infty]$ and $\gamma\in(0,\alpha/2)$ such that $\alpha/2-\gamma-\frac{d}{2s} \in (0,d)$.     Suppose that Assumption \ref{ass_D} $(s,\gamma)$ holds. Then for any $a>a_0>0$,
\begin{align} \label{eq3151438}
    &\int_{\bR^d} |R_{\alpha/2-\gamma}^{s/(s-1)}*R_{\alpha/2-\gamma}^{s/(s-1)}(ax)|^{(s-1)/s} \Pi(dx) \nonumber
    \\
    &\leq
    \begin{cases}
        Na^{\alpha-2\gamma-\frac{d}{s}-d} \quad &\text{ if } \, \alpha/2-\gamma-\frac{d}{2s} \in (0,d/2),
        \\
N \quad &\text{ if } \, \alpha/2-\gamma-\frac{d}{2s} \in [d/2,d),
    \end{cases}
\end{align}
where $N=N(a_0,s,\alpha,\gamma,d)$.
\end{lemma}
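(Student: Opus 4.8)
The plan is to apply the pointwise kernel bound \eqref{eq4032308} directly at the scaled point $ax$ and then reduce everything to two elementary integrability properties of $\Pi$. Replacing $x$ by $ax$ in \eqref{eq4032308} and using $a>0$ gives
\[
|R_{\alpha/2-\gamma}^{s/(s-1)}*R_{\alpha/2-\gamma}^{s/(s-1)}(ax)|^{(s-1)/s}\leq N\,H_{s,\gamma}(ax)\,e^{-ca|x|},
\]
so the entire problem reduces to estimating $\int_{\bR^d}H_{s,\gamma}(ax)e^{-ca|x|}\Pi(dx)$ uniformly for $a>a_0$. The two facts I will use repeatedly are: (a) $\int_{|x|<c}\Pi(dx)<\infty$ for every $c>0$ (Remark \ref{rem5111836}); and (b) for every $\lambda>0$, $\int_{|x|\geq1}e^{-\lambda|x|}\Pi(dx)<\infty$, which follows from \eqref{eq3101709} since $e^{-\lambda|x|}\leq N_\lambda(1+|x|^k)^{-1}$ on $\{|x|\geq1\}$. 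Because $a>a_0$, I always have $e^{-ca|x|}\leq e^{-ca_0|x|}$, and it is precisely this monotonicity that makes the tail bounds uniform in $a$.

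\textbf{Case $\alpha/2-\gamma-d/2s\in(0,d/2)$.} Here $H_{s,\gamma}(y)=|y|^{\alpha-2\gamma-d/s-d}$ with a negative exponent, so I factor the scaling out explicitly:
\[
H_{s,\gamma}(ax)=a^{\alpha-2\gamma-d/s-d}\,|x|^{\alpha-2\gamma-d/s-d}.
\]
It then suffices to show $\int_{\bR^d}|x|^{\alpha-2\gamma-d/s-d}e^{-ca|x|}\Pi(dx)\leq N(a_0)$. On $\{|x|<1\}$ I drop the exponential and invoke the first line of Assumption \ref{ass_D}$(s,\gamma)$ for finiteness; on $\{|x|\geq1\}$ the power is $\leq1$ and fact (b) with $\lambda=ca_0$ gives a finite, $a$-independent bound. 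Multiplying back by $a^{\alpha-2\gamma-d/s-d}$ produces exactly the claimed rate.

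\textbf{Cases $\alpha/2-\gamma-d/2s\in(d/2,d)$ and $=d/2$.} When $H_{s,\gamma}\equiv1$ the bound is immediate: split at $|x|=1$ and apply (a) and (b). The borderline logarithmic case $H_{s,\gamma}(y)=|\log|y||$ is the genuinely fiddly step and the main obstacle, since $\log(a|x|)$ couples $a$ and $x$. I will split according to $a|x|\geq1$ versus $a|x|<1$. On $\{a|x|\geq1\}$ the elementary inequality $\log t\cdot e^{-ct}\leq Ne^{-ct/2}$ for $t\geq1$ converts the logarithm into extra exponential decay, and then (a)–(b) finish it. On $\{a|x|<1\}$ I use $|\log(a|x|)|=\log\frac{1}{a|x|}<\log\frac{1}{a_0|x|}=\log\frac{1}{a_0}+\log\frac{1}{|x|}$ together with the inclusion $\{a|x|<1\}\subseteq\{|x|<1/a_0\}$; the constant term is controlled by (a), the term $\log\frac{1}{|x|}$ by the second line of Assumption \ref{ass_D}$(s,\gamma)$ on $\{|x|<1\}$ (where $\log\frac{1}{|x|}\geq0$) and by its boundedness on $\{1\leq|x|<1/a_0\}$. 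I expect the only care needed is tracking the signs of the logarithms when $a<1$, but no new idea is required beyond facts (a) and (b), so the resulting constant is $a$-independent, matching the claimed bound $N$.
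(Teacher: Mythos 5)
Your proposal is correct and follows essentially the same route as the paper: apply the pointwise bound \eqref{eq4032308} at the scaled point, pull out the factor $a^{\alpha-2\gamma-d/s-d}$ in the first case, split at $a|x|=1$ in the logarithmic case using $a_0|x|<a|x|$ to replace $a$ by $a_0$, and control the tails via \eqref{eq3101709}. Your minor variations (splitting at $|x|=1$ rather than integrating over all of $\bR^d$ at once, and dropping the exponential on $\{a|x|<1\}$ in favor of the inclusion $\{a|x|<1\}\subseteq\{|x|<1/a_0\}$) are cosmetic and do not change the argument.
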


\begin{proof}
Assume that $\alpha/2-\gamma-\frac{d}{2s} \in (0,d/2)$.
Since $a>a_0$, it follows from \eqref{eq4032308} that
\begin{align*}
    &\int_{\bR^d} |R_{\alpha/2-\gamma}^{s/(s-1)}*R_{\alpha/2-\gamma}^{s/(s-1)}(ax)|^{(s-1)/s} \Pi(dx)
    \\
    &\leq N \int_{\bR^d} |ax|^{\alpha-2\gamma-\frac{d}{s}-d} e^{-c_0|ax|} \Pi(dx)
    \\
    &\leq N a^{\alpha-2\gamma-\frac{d}{s}-d} \int_{\bR^d} |x|^{\alpha-2\gamma-\frac{d}{s}-d} e^{-c|a_0x|} \Pi(dx),
\end{align*}
where the last integral is finite due to \eqref{eq3101709} and \eqref{eq3071615}.

Now consider the case $\alpha/2-\gamma-d/2s=d/2$. If $a|x|<1$, then $a_0|x|<a|x|<1$ and we have
\begin{equation*}
    |\log |ax||e^{-a|x|} \leq \log(1/a_0|x|)e^{-a_0|x|} \leq (N(a_0)+|\log|x||)e^{-a_0|x|}.
\end{equation*}
If $a|x|>1$, then there exists $c>0$ such that
\begin{equation*}
    |\log |ax||e^{-a|x|} \leq e^{-ca|x|} \leq e^{-ca_0|x|}.
\end{equation*}
Combining these two estimates, we get
\begin{align*}
    &\int_{\bR^d} |R_{\alpha/2-\gamma}^{s/(s-1)}*R_{\alpha/2-\gamma}^{s/(s-1)}(ax)|^{(s-1)/s} \Pi(dx)
    \\
    &\leq N \int_{\bR^d} \left( 1+ |\log|x|| \right) e^{-c|a_0x|} \Pi(dx)<\infty.
\end{align*}
Finally, the case $\alpha/2-\gamma-d/2s \in (d/2,d)$ can be handled in a similar manner.
This completes the proof of the lemma.
\end{proof}

Recall that $\cH$ is separable, and there exists a complete orthonormal system $\{e_k\}\subset \cS(\bR^d)$ for $\cH$.
The following fundamental identity will be used in the proof of Lemma \ref{lem3291751}:
\begin{equation} \label{eq3141037}
    \sum_{k=1}^\infty |\langle f, e_k \rangle_{\cH}|^2 = \| f \|_{\cH}^2, \quad  \forall f\in \cH.
\end{equation}

\begin{lem} \label{lem3291751}
Let $\alpha\in(0,2)$, $\gamma\in(0,\alpha/2)$, $p\geq 2s/(s-1)$, and $g^k(u):=h(u)(\Pi*e_k)$. Suppose that Assumptions \ref{ass_D} $(s,\gamma)$ and \ref{ass_h} $(s,\gamma)$ hold.
Then for any $t$ and $\varepsilon>0$, there exists a constant $N_\varepsilon=N_\varepsilon(d,p,\theta,s,\theta_0,\gamma,\alpha)>0$ such that
\begin{align*}
&\|g(t,\cdot,u)-g(t,\cdot,v)\|_{H_{p,\theta}^{\gamma-\alpha/2}(D,l_2)}^p \nonumber
\\
&\leq \varepsilon \|u-v\|^p_{H_{p,\theta-\alpha p/2}^{\gamma}(D)} + N_\varepsilon \|u-v\|_{H_{p,\theta}^{\gamma-\alpha/2}(D)}^p.
\end{align*}
\end{lem}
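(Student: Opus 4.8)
The plan is to verify that the stochastic coefficient $g=(g^k)_k$, with $g^k(u)=\xi\,h(u)(\Pi*e_k)$ as in Definition~\ref{def_solW} (the factor $\xi$ being the one constrained by Assumption~\ref{ass_h}), satisfies a clean bound of the form $\|g(t,\cdot,u)-g(t,\cdot,v)\|_{H^{\gamma-\alpha/2}_{p,\theta}(D,l_2)}\le N\|u-v\|_{L_{p,\theta'}(D)}$ for a suitable weight index $\theta'$, and then to convert this into the stated $\varepsilon$--$N_\varepsilon$ inequality by interpolation. Write $U:=u-v$ and $F:=\xi\,(h(\cdot,\cdot,u)-h(\cdot,\cdot,v))$, so that $g^k(u)-g^k(v)=F\,(\Pi*e_k)$ and, by Assumption~\ref{ass_h}~$(ii)$, $|F|\le N|\xi|\,|U|$ pointwise.

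First I would reduce the negative-order, $l_2$-valued norm to a pointwise identity. Since $\gamma-\alpha/2<0$, set $\beta:=\alpha/2-\gamma>0$ and use the kernel representation \eqref{negative repre} to write $(1-\Delta)^{(\gamma-\alpha/2)/2}(g^k(u)-g^k(v))(x)=N\int_{\bR^d}R_{\beta}(x-y)F(y)(\Pi*e_k)(y)\,dy$. Recognizing the right-hand side, via the identity $\langle \varphi,e_k\rangle_{\cH}=(\Pi*e_k,\varphi)_{\bR^d}$ behind \eqref{eq4011722}, as $N\langle R_{\beta}(x-\cdot)F,e_k\rangle_{\cH}$, Parseval's identity \eqref{eq3141037} yields the pointwise formula $\sum_{k}|(1-\Delta)^{(\gamma-\alpha/2)/2}(g^k(u)-g^k(v))(x)|^2=N\|R_{\beta}(x-\cdot)F\|_{\cH}^2$. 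This step trades the sum over the orthonormal system for a single $\cH$-norm and is where the spectral structure of the noise enters.

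The core estimate is then to bound $\|R_{\beta}(x-\cdot)F\|_{\cH}^2=(\Pi,\,[R_{\beta}(x-\cdot)F]*\overline{[R_{\beta}(x-\cdot)F]})$. After inserting $|F|\le N|\xi||U|$ and symmetrizing in the covariance variable (using $\Pi(-dz)=\Pi(dz)$), I would apply Hölder's inequality with exponents $s$ and $s/(s-1)$ to peel off $\xi$ in $L_{2s}$ and leave the kernel $R_{\beta}^{s/(s-1)}*R_{\beta}^{s/(s-1)}$ paired against $\Pi$. The sharp bound \eqref{eq4032308} together with the reinforced Dalang condition \eqref{eq3071615} of Assumption~\ref{ass_D}~$(s,\gamma)$ is exactly what makes this $\Pi$-integral finite, while its scaled version \eqref{eq3151438} supplies the scale factors needed below. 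The outcome is a pointwise bound of $\sum_{k}|\cdots|^2$ by a constant multiple of $\|\xi\|_{L_{2s}}^2$ times a convolution of $|U|^2$ against an integrable kernel.

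Finally I would assemble the weighted $L_p(D)$ norm through the dyadic decomposition \eqref{def. Hptheta}: on each shell $d_x\approx e^{-n}$ I rescale $x\mapsto e^{n}x$, apply the scaling estimate \eqref{eq3151438} with $a\approx e^{n}$, use the weight control \eqref{eqn theta_0} on $\xi$, and sum over $n$, the condition $p\ge 2s/(s-1)$ ensuring that the Hölder/Young steps in $x$ close. This produces $\|g(u)-g(v)\|_{H^{\gamma-\alpha/2}_{p,\theta}(D,l_2)}\le N\|U\|_{L_{p,\theta-\theta_0 p/2s+\overline{\theta}p}(D)}$, with $\overline{\theta}=(-\alpha/2+\gamma+d/2s)\vee(-d/2)$. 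Since the regularity index $0$ is strictly below $\gamma$, and since the strict inequality $\theta_0<(2s\gamma+d)\vee s(\alpha-d)$ of Assumption~\ref{ass_h}~$(iii)$ is precisely equivalent to $\theta-\theta_0 p/2s+\overline{\theta}p>\theta-\alpha p/2$, the interpolation inequality of Lemma~\ref{lem_prop}~$(v)$ combined with \eqref{eq5111835}---exactly the argument in the Remark following Assumption~\ref{ass_semi}---upgrades this to the asserted $\varepsilon$--$N_\varepsilon$ bound. The main obstacle is the covariance estimate of the third paragraph: matching the Hölder exponent $s$ with the sharp kernel bound so that convergence is governed exactly by the Dalang condition, and then tracking the scaling and weight exponents $\overline{\theta}$ and $\theta_0$ correctly through the dyadic sum.
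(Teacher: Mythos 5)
Your proposal is correct and follows essentially the same route as the paper's proof: the kernel representation \eqref{negative repre} plus the Parseval identity \eqref{eq3141037} to convert the $l_2$-sum into $\|R_{\alpha/2-\gamma}(x-\cdot)\xi\zeta_{-n}(h(u)-h(v))\|_{\cH}^2$, H\"older with exponents $(s,s/(s-1))$ to peel off $\xi$ via \eqref{eqn theta_0}, the scaled kernel bound \eqref{eq3151438} on each dyadic shell of \eqref{def. Hptheta}, and finally Lemma \ref{lem_prop} $(v)$ using that $\theta_0<(2s\gamma+d)\vee s(\alpha-d)$ forces the resulting weight to exceed $\theta-\alpha p/2$. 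Your identification of the target weight $\theta-\theta_0 p/2s+\overline{\theta}p$ and of the role of $p\geq 2s/(s-1)$ in the Minkowski/H\"older step matches the paper's computation exactly.
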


\begin{proof}
For simplicity, we omit the variable $t$.
Since $D$ is bounded, there exists $n_0 = n_0(D) \in \bN$ such that
\begin{align} \label{eq1102328}
&\|g(u)-g(v)\|_{H_{p,\theta}^{\gamma-\alpha/2}(D)}^p \nonumber
\\
&= \sum_{n=-\infty}^{n_0} e^{n\theta} \| \xi(e^n\cdot)\zeta_{-n}(e^n\cdot)(g(u)(e^n\cdot)-g(v)(e^n\cdot) ) \|_{H_p^{\gamma-\alpha/2}}^p \nonumber
\\
&= \sum_{n=-\infty}^{n_0} e^{n\theta} \int_{\bR^d} |(1-\Delta)^{(\gamma-\alpha/2)/2} \left(\xi(e^n\cdot) \zeta_{-n}(e^n\cdot) (g(u)(e^n\cdot)-g(v)(e^n\cdot)) \right) (x)|_{l_2}^p dx \nonumber
\\
&=: \sum_{n=-\infty}^{n_0} e^{n\theta} \int_{\bR^d} |I|_{l_2}^p(x) dx,
\end{align}
Let us put $I=(I^1,I^2,\dots)$.  Then, by \eqref{negative repre},
\begin{align} \label{eq1111253}
I^k(x) &= \int_{\bR^d} R_{\alpha/2-\gamma}(x-y) \xi(e^ny) \zeta_{-n}(e^ny)(g^k(u)(e^ny)-g^k(v)(e^ny)) dy \nonumber
\\
&= Ne^{-nd} \int_{\bR^d} R_{\alpha/2-\gamma}(x-e^{-n}y) \xi(y) \zeta_{-n}(y) (h(u)(y)-h(v)(y)) (\Pi*e_k)(y) dy.
\end{align}
Since $(\Pi*e_k)(y)=(\Pi,e_k(y-\cdot))_{\bR^d}$, by Fubini's theorem and the change of variables $y\to y+z$,
\begin{align} \label{eq1111300}
  &\int_{\bR^d} R_{\alpha/2-\gamma}(x-e^{-n}y) \xi(y) \zeta_{-n}(y) (h(u)(y)-h(v)(y)) (\Pi*e_k)(y) dy \nonumber
  \\
  &=\int_{\bR^d} \int_{\bR^d} R_{\alpha/2-\gamma}(x-e^{-n}y) \xi(y) \zeta_{-n}(y) (h(u)(y)-h(v)(y)) e_k(y-z) \, \Pi(dz) dy \nonumber
    \\
  &=\int_{\bR^d} \int_{\bR^d} R_{\alpha/2-\gamma}(x-e^{-n}y-e^{-n}z) \nonumber
  \\
  &\qquad \qquad \quad \times \xi(y+z) \zeta_{-n}(y+z) (h(u)(y+z)-h(v)(y+z)) e_k(y) \, \Pi(dz) dy \nonumber
  \\
  &= \int_{\bR^d} \left[\left(R_{\alpha/2-\gamma}(x-e^{-n}\cdot) \xi \zeta_{-n} (h(u)-h(v))\right)*\overline{e}_k\right](z) \, \Pi(dz) \nonumber
  \\
  &= \langle R_{\alpha/2-\gamma}(x-e^{-n}\cdot) \xi \zeta_{-n} (h(u)-h(v)),e_k \rangle_{\cH}.
\end{align}
Putting $w:=u-v$ and using \eqref{eq4011722}, \eqref{eq3141037}, \eqref{eq1111253}, and \eqref{eq1111300}, we have
\begin{align} \label{eq1112043} 
  &|I|_{l_2}^2(x) = Ne^{-2nd} \sum_{k=1}^\infty \left| \langle R_{\alpha/2-\gamma}(x-e^{-n}\cdot) \xi \zeta_{-n} (h(u)-h(v)),e_k \rangle_{\cH} \right|^2 \nonumber
  \\
  &=Ne^{-2nd}  \| R_{\alpha/2-\gamma}(x-e^{-n}\cdot) \xi \zeta_{-n} (h(u)-h(v)) \|_{\cH}^2 \nonumber
  \\
  &\leq Ne^{-2nd} \int_{\bR^d} \int_{\bR^d} \Big|R_{\alpha/2-\gamma}(x-e^{-n}y) \xi(y) \zeta_{-n}(y) w(y) \nonumber
  \\
  &\qquad \qquad \qquad \times R_{\alpha/2-\gamma}(x-e^{-n}(y-z)) \xi(y-z) \zeta_{-n}(y-z) w(y-z) \Big| dy \Pi(dz) \nonumber
  \\
  &=:N e^{-2nd} \int_{\bR^d} J(x,z) \Pi(dz).
\end{align}
Here, for the last inequality, we used \eqref{eq5111852}.

Below we only handle the case $s<\infty$, as the   the case $s=\infty$ can be treated in a similar manner.
By H\"older's inequality and Assumption \ref{ass_h} $(s,\gamma)$,
\begin{align*}
    &\int_{\bR^d} |\xi(y) \xi(y-z)|^s |\zeta_{-n}(y)\zeta_{-n}(y-z)| dy
    \\
    &\leq \left( \int_{\bR^d} |\xi(y)|^{2s} |\zeta_{-n}(y)|^2 dy  \right)^{1/2} \left( \int_{\bR^d} |\xi(y-z)|^{2s} |\zeta_{-n}(y-z)|^2 dy \right)^{1/2}
    \\
    &= \int_{\bR^d} |\xi(y)|^{2s} |\zeta_{-n}(y)|^2 dy
    \\
    &=Ne^{nd} \int_{\bR^d} |\xi(e^ny)|^{2s} |\zeta_{-n}(e^ny)|^2 dy \leq Ne^{n(d-\theta_0)} \|\xi\|_{L_{2s,\theta_0}(D)}^{2s} \leq Ne^{n(d-\theta_0)}.
\end{align*}
Thus, for $r:=s/(s-1)$, using H\"older's inequality and the change of variables $y\to y+e^nx$,
\begin{align*}
&J(x,z) \leq Ne^{n(d/s-\theta_0/s)}
\\
&\quad \times \Big( \int_{\bR^d} |R_{\alpha/2-\gamma}(e^{-n}y) R_{\alpha/2-\gamma}(e^{-n}(y-z))|^r  w_n(e^nx+y)w_n(e^nx+y-z) dy \Big)^{1/r},
\end{align*}
where $w_n(y):=|w(y)|^{r}|\zeta_{-n}(y)|$.
Then we apply Minkowski's inequality (note $p\geq 2s/(s-1)=2r$) and H\"older's inequality to obtain
\begin{align} \label{eq3141507}
&\int_{\bR^d} |I|_{l_2}^p(x) dx \leq Ne^{-ndp} \int_{\bR^d} \left|\int_{\bR^d} J(x,z) \Pi(dz)\right|^{p/2} dx \nonumber
\\
&\leq N e^{n(-dp+dp/2s-\theta_0p/2s)} \nonumber
  \\
  &\quad\times \Bigg\{ \int_{\bR^d} \bigg[ \int_{\bR^d} \left( \int_{\bR^d} \left| w_n(e^nx+y) w_n(e^nx+y-z) \right|^{p/2r} dx \right)^{2r/p} \nonumber
  \\
  &\qquad \qquad \qquad \times  |R_{\alpha/2-\gamma}(e^{-n}y)|^r |R_{\alpha/2-\gamma}(e^{-n}(y-z))|^r  dy \bigg]^{1/r} \Pi(dz) \Bigg\}^{p/2} \nonumber
  \\
  &= N e^{n(-dp+dp/2s-\theta_0p/2s)} \nonumber
  \\
  &\quad \times \left( \int_{\bR^d} | w_n(e^nx)|^{p/r} dx \right) \times \left( e^{nd/r} \int_{\bR^d} |R_{\alpha/2-\gamma}^r * R_{\alpha/2-\gamma}^r(e^{-n}z)|^{1/r} \Pi(dz)  \right)^{p/2}.
\end{align}
When $\alpha/2-\gamma-\frac{d}{2s} \in (0,d/2)$, by \eqref{eq3151438} with $a_0=e^{-n_0}$ and $r=s/(s-1)$, \eqref{eq1102328}, \eqref{eq1112043}, and \eqref{eq3141507},
\begin{align} \label{eq3151036}
  &\|g(u)-g(v)\|_{H_{p,\theta}^{\gamma-\alpha/2}(D)}^p \nonumber
  \\
  &\leq N \sum_{n=-\infty}^{n_0}  e^{n(\theta+dp/2s-\theta_0p/2s-\alpha p/2+\gamma p)} \int_{\bR^d} |(u-v)(e^nx)|^p|\zeta_{-n}(e^nx)|^{p/r} dx \nonumber
  \\
  &\leq N \|u-v\|_{L_{p,\theta+dp/2s-\theta_0p/2s-\alpha p/2+\gamma p}(D)}^p.
\end{align}
Since $\theta_0<2s\gamma+d$, it follows that 
\begin{equation*}
    \theta+dp/2s-\theta_0p/2s-\alpha p/2+\gamma p > \theta -\alpha p/2,
\end{equation*}
which allows us to Lemma \ref{lem_prop} $(v)$ to obtain the desired result.

Finally, we consider the case when $\alpha/2-\gamma-\frac{d}{2s} \in [d/2,d)$. Using a   similar argument as above, we obtain
\begin{equation} \label{eq3151037}
    \|g(u)-g(v)\|_{H_{p,\theta}^{\gamma-\alpha/2}(D)}^p \leq N \|u-v\|_{L_{p,\theta-dp/2-\theta_0p/2s}(D)}^p.
\end{equation}
Since
\begin{equation*}
    \theta-dp/2-\theta_0p/2s >\theta-\alpha p/2,
\end{equation*}
 we can again apply  Lemma \ref{lem_prop} $(v)$ to obtain the desired estimate. The lemma is proved.
\end{proof}

We now are ready to prove Theorem  \ref{mainthm}.
\begin{proof}[Proof of Theorem \ref{mainthm}]
Since $L_{p,\theta-\alpha p/2}(D) \subset H_{p,\theta+\alpha p/2}^{\gamma-\alpha}(D)$,  for any $t>0$ we have 
\begin{align} \label{eq3152015}
\|f(t,\cdot,u)-f(t,\cdot,v)\|_{H_{p,\theta+\alpha p/2}^{\gamma-\alpha}(D)}^p &\leq N \|f(t,\cdot,u)-f(t,\cdot,v)\|_{L_{p,\theta+\alpha p/2}(D)}^p \nonumber
\\
&= N \int_D |f(t,x,u(x))-f(t,x,v(x))|^p d_x^{\theta-d+\alpha p/2} dx \nonumber
\\
&\leq N \int_D |u(x)-v(x)|^p d_x^{\theta-d+\alpha p/2} dx \nonumber
\\
&= N \|u-v\|_{L_{p,\theta + \alpha p/2}(D)}^p.
\end{align}
Let us take $\kappa \in (0,1)$ such that $0=\kappa (\gamma-\alpha) + (1-\kappa)(\gamma-\alpha/2)$ and define $\tilde{\theta}=\kappa (\theta-\alpha p/2) + (1-\kappa)\theta$.
Since $L_{p,\theta_0}(D)\subset L_{p,\theta+\alpha p/2}(D)\subset H_{p,\theta+\alpha p/2}^{\gamma-\alpha}(D)$, by \eqref{eq3152015} and Lemma \ref{lem_prop} $(v)$,
\begin{align*}
    \|f(t,\cdot,u)-f(t,\cdot,v)\|_{H_{p,\theta+\alpha p/2}^{\gamma-\alpha}(D)}^p &\leq N \|u-v\|^p_{L_{p,\tilde{\theta}}(D)}
    \\
    &\leq \varepsilon \|u-v\|^p_{H_{p,\theta-\alpha p/2}^{\gamma}(D)} + N_\varepsilon \|u-v\|_{H_{p,\theta}^{\gamma-\alpha/2}(D)}^p.
\end{align*}
Combing this with Lemma \ref{lem3291751}, we conclude that $f$ and $g$ satisfy Assumption \ref{ass_semi}.
Moreover, by following the same reasoning  used to prove \eqref{eq3151036} and \eqref{eq3151037},  one can show  that
$$
\|g(0)\|_{H_{p,\theta}^{\gamma-\alpha/2}(D)} \leq N \|h(0)\|_{L_{p,\theta-\theta_0p/2s+\overline{\theta}p}(D,l_2)}.
$$
 Therefore, the theorem follows from  Theorem \ref{thm_white}. 
\end{proof}

\section{Proof of  Theorem \ref{thm_super}} \label{proofsuper}

\begin{lemma} \label{lem4081801}
    Let $\lambda \geq 0$, $\alpha\in(0,2), \sigma\in(0,1), \gamma\in (0,\alpha/2)$, $p\in[2,\infty)$,  and $T\in(0,\infty)$. Let  $\tau\leq T$ be a bounded stopping time. 
Assume that $\theta\in(d-1,d-1+p)$ if $D$ is a bounded $C^{1,\sigma}$ convex domain, and $\theta\in(d-\alpha/2,d-\alpha/2+\alpha p/2)$ if $D$ is a bounded $C^{1,\sigma}$ open set. 
Suppose that Assumptions \ref{ass_nu}, \ref{ass_lambda} $(\lambda,\gamma)$, and \ref{ass2} are satisfied.
Then for any $m\in \bN$ and $u_0\in U_{p,\theta}^{\gamma}(D)$, there is a unique solution $u_m$ to 
\begin{equation} \label{eq4100000}
\begin{cases}
du=L_tu dt + \xi|0\vee u\wedge m|^{1+\lambda} \dot{W}, \quad &(t,x)\in(0,\infty)\times D,
\\
u(0,x)=u_0,\quad & x\in D,
\\
u(t,x)=0,\quad &(t,x)\in[0,\infty)\times D^c,
\end{cases}
\end{equation}
in the space $\frH_{p,\theta,\alpha}^{\gamma}(D,\tau)$.
In particular, if $u_0\geq0$, then $u_m\geq0$.
\end{lemma}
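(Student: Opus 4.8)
The plan is to recast \eqref{eq4100000} as the semilinear equation \eqref{eq_colored} with $f\equiv0$ and $h(u)=h_m(u):=|0\vee u\wedge m|^{1+\lambda}$, and to deduce existence and uniqueness from Theorem \ref{mainthm}. By Definition \ref{def_solW} this amounts to solving \eqref{linear} with $g^k(u):=\xi h_m(u)(\Pi*e_k)$ and $f\equiv0$. The essential feature of the truncation is that $h_m$ is bounded by $m^{1+\lambda}$ and globally Lipschitz, with $h_m(0)=0$: since $r\mapsto 0\vee r\wedge m$ is $1$-Lipschitz and $t\mapsto t^{1+\lambda}$ is Lipschitz on $[0,m]$, we have $|h_m(u)-h_m(v)|\le(1+\lambda)m^{\lambda}|u-v|$.

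First I would verify the hypotheses of Theorem \ref{mainthm}, taking $s=\infty$. This is permissible precisely because $h_m$ is globally Lipschitz: there is no need to match $s$ to $\lambda$ as in the genuinely super-linear problem, so the constraint $p\ge 2s/(s-1)$ degenerates to $p\ge2$, which is assumed. Assumption \ref{ass_h}$(\infty,\gamma)$ holds since $h_m$ is Lipschitz and, by Assumption \ref{ass2}, $\sup_{\omega,t}\|\xi(t,\cdot)\|_{L_{\infty,\theta_0}(D)}=\|\xi\|_{L_\infty}<\infty$ imposes no restriction on $\theta_0$. Moreover Assumption \ref{ass_lambda}$(\lambda,\gamma)$ coincides with Assumption \ref{ass_D}$(s,\gamma)$ for $2\lambda=1/s$, and enlarging $s$ only weakens the condition on $\Pi$ near the origin, so Assumption \ref{ass_D}$(\infty,\gamma)$ follows from Assumption \ref{ass_lambda}$(\lambda,\gamma)$. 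Finally, because $f\equiv0$ and $h_m(0)=0$, the data hypotheses of Theorem \ref{mainthm} reduce to $u_0\in U_{p,\theta}^{\gamma}(D)$. Theorem \ref{mainthm} then produces the unique solution $u_m\in\frH_{p,\theta,\alpha}^{\gamma}(D,\tau)$.

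For nonnegativity when $u_0\ge0$, the decisive identity is $1_{u_m(t,\cdot)<0}\,h_m(u_m)=0$, valid because $0\vee u_m=0$ wherever $u_m<0$. Freezing the coefficients, $u_m$ solves the linear problem $du=L_tu\,dt+\sum_k\tilde g^k\,dw^k_t$ with $\tilde f\equiv0$ and $\tilde g^k:=\xi h_m(u_m)(\Pi*e_k)$, and this frozen data satisfies exactly the sign conditions \eqref{eq3302353}, namely $1_{u_m<0}\tilde f\ge0$ and $1_{u_m<0}\tilde g^k=0$. I would therefore obtain $u_m\ge0$ from the maximum principle, Theorem \ref{thm_max}. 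The main obstacle is that Theorem \ref{thm_max} is formulated in the energy space $\frH_{2,d,\alpha}^{\alpha/2}(D,\tau)$ with an $L_2$ noise coefficient $\tilde g\in\bL_{2,d}(D,\tau,l_2)$, whereas $u_m$ lies only in $\frH_{p,\theta,\alpha}^{\gamma}(D,\tau)$ with $\gamma<\alpha/2$ and a rough coefficient $\tilde g\in\bH_{p,\theta}^{\gamma-\alpha/2}(D,\tau,l_2)$ of negative order (which for singular $\Pi$, e.g.\ space-time white noise, is not of order zero).

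I would bridge this gap by a double approximation. Using that $e_k\in\cS(\bR^d)$ makes each $\Pi*e_k$ smooth and bounded on $D$, replace the noise by the finite truncation $\sum_{k\le K}$ and the datum by smooth nonnegative $u_0^{(n)}\to u_0$ in $U_{p,\theta}^{\gamma}(D)$, compactly supported in $D$ and obtained by mollification, which preserves the sign and lands in every space, in particular in $U_{2,d}^{\alpha/2}(D)$. Each regularized problem has boundedly many bounded smooth noise coefficients, so its solution $u_m^{(n),K}$ belongs to $\frH_{2,d,\alpha}^{\alpha/2}(D,\tau)$ with frozen coefficient in $\bL_{2,d}(D,\tau,l_2)$, and Theorem \ref{thm_max} applies to give $u_m^{(n),K}\ge0$. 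Letting $K\to\infty$ and then $n\to\infty$ — the driving processes $w^k$ are unchanged, so the convergence $u_m^{(n),K}\to u_m$ in $\frH_{p,\theta,\alpha}^{\gamma}(D,\tau)$ is controlled by the linear a priori estimate behind Theorem \ref{mainthm} — and invoking Proposition \ref{prop holder} to upgrade this to convergence in $C([0,\tau];H_{p,\theta}^{\gamma-\alpha/2}(D))$ and hence to a.e.\ convergence along a subsequence, the sign is inherited in the limit, yielding $u_m\ge0$. Securing these two limits together with the preservation of nonnegativity is where the argument needs the most care.
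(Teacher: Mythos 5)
Your proposal is correct and follows essentially the same route as the paper: existence and uniqueness come from Theorem \ref{mainthm} applied with $s=\infty$, using the global Lipschitz bound $|h_m(u)-h_m(v)|\le (1+\lambda)m^{\lambda}|u-v|$ (the paper likewise implicitly uses that Assumption \ref{ass_lambda} $(\lambda,\gamma)$ implies Assumption \ref{ass_D} $(\infty,\gamma)$), and nonnegativity is obtained exactly as you describe, by truncating the noise to finitely many $e_k$, replacing $u_0$ by nonnegative approximants lying in the $p=2$, $\theta=d$ scale, applying the maximum principle of Theorem \ref{thm_max} with the noise coefficient frozen (so that $1_{u<0}\,\xi h_m(u)(\Pi*e_k)=0$), and passing to the limit in $\frH_{p,\theta,\alpha}^{\gamma}(D,\tau)$ via the linear a priori estimate. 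The only cosmetic difference is that the paper approximates $u_0$ by multiplication with the partial sums $\sum_{|m|\le n}\zeta_m$ of a partition of unity rather than by mollification, and runs the data and noise approximations along a single index instead of two.
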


\begin{proof}
Let $h_m(u) := \xi|0\vee u \wedge m|^{1+\lambda}$. Then by the mean-value theorem,
\begin{equation*}
    |h_m(u)-h_m(v)|\leq N(1+\lambda)m^{\lambda}|u-v|,
\end{equation*}
which implies that Assumption \ref{ass_h} $(s,\gamma)$ with $s=\infty$ is satisfied. Thus, by applying Theorem \ref{mainthm} with $h_m$ instead of $h$, we obtain a unique solution $u_m$ to \eqref{eq4100000} such that $u_m \in \frH_{p,\theta,\alpha}^{\gamma}(D,\tau)$.

For simplicity, we fix $m$ and put $u=u_m$. We now prove the positivity of solution $u$.
Take a sequence of $\zeta_n\in C_c^\infty(D)$ satisfying \eqref{zeta1}-\eqref{zeta3}. Here, we further assume that $\sum_{n\in\bZ}\zeta_n=1$ in $D$. Letting $u_0^n:=u_0\times \sum_{|m|\leq n} \zeta_m$, one can show that $u_0^n \geq0$, $u_0^n \in U_{2,d}^\gamma(D)\cap U_{p,\theta}^\gamma(D)$, and $u_0^n \to u_0$ in $U_{p,\theta}^\gamma(D)$. Let us consider
    \begin{equation} \label{eq4051907}
\begin{cases}
dv=L_t v dt + \sum_{k=1}^n \xi|0\vee v \wedge m|^{1+\lambda}(\Pi*e_k) dw^k_t, \quad &(t,x)\in(0,\tau)\times D,
\\
v(0,x)=u_0^n,\quad & x\in D,
\\
v(t,x)=0,\quad &(t,x)\in[0,\tau]\times D^c,
\end{cases}
\end{equation}
where $\{e_k\}$ is a complete orthonormal system of $\cH$. Note that using \eqref{eq3101709} and the fact $e_k\in \cS(\bR^d)$, one can easily show that $(\Pi*e_k)(x)=(\Pi,e_k(x-\cdot))_{\bR^d}$ is bounded for each $k$. Thus, if we define
\begin{align*}
    g^n_m := \left(\xi|0\vee u \wedge m|^{1+\lambda}(\Pi*e_1),\cdots,\xi|0\vee u \wedge m|^{1+\lambda}(\Pi*e_n),0,0,\cdots \right),
\end{align*}
then $g^n_m \in\bL_{p,\theta}(D,\tau,l_2)$ and it satisfies \eqref{eq106121}. This implies that for each $n$, we can find a solution $u^n$ to \eqref{eq4051907} satisfying $u^n\in \frH_{p,\theta,\alpha}^{\alpha/2}(D,\tau) \subset \frH_{p,\theta,\alpha}^{\gamma}(D,\tau)$.

 Since $1_{u<0}g^{n,k}_m = 1_{u<0}\xi|0\vee u \wedge m|^{1+\lambda}(\Pi*e_k) = 0$ for each $k$, we can apply the maximum principle (Theorem \ref{thm_max}) to obtain that $u^n\geq0$ for each $n$.

Now it remains to show that $u^n\to u$ in $\frH_{p,\theta,\alpha}^{\gamma}(D,\tau)$, which leads to $u\geq0$. Note that $v:=u^n-u$ satisfies $v(0,\cdot)=u_0^n-u$, and
\begin{equation*}
    dv=L_t v dt + \left( g_m(u)-g_m^n(u^n) \right) dw^k_t
\end{equation*}
in $(0,\tau)\times D$, where $g_m:=\lim_{n\to\infty} g_m^n$. By \eqref{eq1092122},
\begin{align} \label{eq4052034}
\|u^n-u\|_{\frH_{p,\theta,\alpha}^\gamma(D,\tau)} &\leq N\left(\|u_0^n-u_0\|_{U_{p,\theta}^\gamma(D)} + \|g_m(u)-g_m^n(u^n)\|_{\bH_{p,\theta}^{\gamma-\alpha/2}(D,\tau,l_2)}\right) \nonumber
\\
&\leq N\left(\|u_0^n-u_0\|_{U_{p,\theta}^\gamma(D)} + \|g_m(u)-g_m(u^n)\|_{\bH_{p,\theta}^{\gamma-\alpha/2}(D,\tau,l_2)}\right) \nonumber
\\
&\quad+ N \|g_m(u^n)-g_m^n(u^n)\|_{\bH_{p,\theta}^{\gamma-\alpha/2}(D,\tau,l_2)}.
\end{align}
As in \eqref{eq3151036} and \eqref{eq3151037}, one can show that
\begin{align*}
    \|g_m(u)-g_m(u^n)\|_{\bH_{p,\theta}^{\gamma-\alpha/2}(D,\tau,l_2)} &\leq N(m)\|u-u^n\|_{\bL_{p,\tilde{\theta}}(D,\tau)} 
    \\
    &\leq N(m) \|u-u^n\|_{\bL_{p,\theta-\alpha p/2}(D,\tau)},
\end{align*}
where $\tilde{\theta}=(\theta-\alpha p/2+\gamma p)\vee (\theta-dp/2)$. Obviously, the fact $g_m(u^n)\in \bH_{p,\theta}^{\gamma-\alpha/2}(D,\tau,l_2)$ yields that
\begin{equation*}
    \lim_{n\to\infty} \|g_m(u^n)-g_m^n(u^n)\|_{\bH_{p,\theta}^{\gamma-\alpha/2}(D,\tau,l_2)}=0.
\end{equation*}
Thus, letting $n\to\infty$ in \eqref{eq4052034}, we obtain the desired result. The lemma is proved.
\end{proof}

\begin{lemma}
Suppose the assumptions of Lemma \ref{lem4081801} are satisfied,  and let $u_m$ be a solution to \eqref{eq4100000} such that $u_m\in \frH_{p,\theta,\alpha}^\gamma(D,\tau)$.
In addition, we assume that Assumption \ref{ass_lambda} $(\lambda,\gamma)$ holds,
    \begin{equation*}
        p>\frac{d+\alpha}{\gamma} \wedge \frac{2}{1-2\lambda},
    \end{equation*}
    and
\begin{equation} \label{eq4081803}
    \theta <d-1+p-\frac{\alpha p}{2} + \left[\frac{\gamma p}{\lambda} \vee \left(\frac{(\alpha-d)p}{2\lambda} -dp\right)\right].
\end{equation}
Then we have
\begin{equation} \label{eq4090015}
    \lim_{R\to\infty}\sup_m \bP\left( \sup_{t\leq\tau,x\in D}|\psi^{\gamma+\frac{\theta-d}{p}-\frac{\alpha}{2}}u_m|>R
 \right)=0.
\end{equation}
\end{lemma}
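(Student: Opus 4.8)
The plan is to reduce the pathwise supremum in \eqref{eq4090015} to a moment bound via the embedding theory of Section \ref{sec_func}, then localize by a stopping time so that the truncated nonlinearity can be handled uniformly in $m$. Throughout set $c:=\gamma+\frac{\theta-d}{p}-\frac{\alpha}{2}$, which is the exponent appearing in \eqref{eq4090015}.

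First I would record a weighted supremum embedding. Combining Proposition \ref{prop holder}$(i)$ (applied with $\nu$ chosen as $(\gamma-d/p)/\alpha$, which lies in $(1/p,1/2)$ thanks to the room provided by \eqref{p}) with the Sobolev embedding Lemma \ref{lem_prop}$(iv)$, one obtains, for every bounded stopping time $\rho\le\tau$,
\[
\bE\sup_{t\le\rho,\,x\in D}|\psi^{c}u_m(t,x)|^p\le N\|u_m\|_{\frH_{p,\theta,\alpha}^{\gamma}(D,\rho)}^p,
\]
with $N$ independent of $m$ and $\rho$ (the leftover nonnegative powers of $\psi$ are harmless since $D$ is bounded). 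For fixed $R>0$ I then set $\rho_m^R:=\inf\{t\ge0:\sup_{x}|\psi^{c}u_m(t,\cdot)|\ge R\}\wedge\tau$. On $\{\rho_m^R<\tau\}$ the weighted supremum attains $R$, so by the displayed inequality and Chebyshev it suffices to produce a bound $\bE\sup_{t\le\rho_m^R,x}|\psi^{c}u_m|^p\le C(R)$, uniform in $m$, with $C(R)=o(R^p)$ as $R\to\infty$.

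To estimate $\|u_m\|_{\frH_{p,\theta,\alpha}^{\gamma}(D,\rho_m^R)}$ I treat the noise coefficient $g_m^k(u_m)=\xi\,h_m(u_m)(\Pi*e_k)$, with $h_m(u)=|0\vee u\wedge m|^{1+\lambda}$, as a \emph{given} free term and apply the linear estimate of Theorem \ref{thm_white}; the key point is that its constant does not see the ($m$-dependent) Lipschitz constant of $h_m$. Combined with the convolution/kernel estimates from the proof of Theorem \ref{mainthm} (used with $s=1/(2\lambda)$, so that Assumption \ref{ass_lambda}$(\lambda,\gamma)$ plays the role of the reinforced Dalang condition and Assumption \ref{ass2} gives $\xi$ bounded), this reduces the stochastic term to a weighted super-linear $L_p$-norm, yielding
\[
\|u_m\|_{\frH_{p,\theta,\alpha}^{\gamma}(D,\rho_m^R)}^p\le N\|u_0\|_{U_{p,\theta}^{\gamma}(D)}^p+N\,\bE\int_0^{\rho_m^R}\|h_m(u_m)(t,\cdot)\|_{L_{p,\hat\theta}(D)}^p\,dt,
\]
where $\hat\theta$ is determined by $\alpha,\gamma,\lambda,d$ (optimized over the admissible weight exponent $\theta_0$) and $N$ is independent of $m,R$.

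The heart of the argument, and the step I expect to be the main obstacle, is controlling this super-linear term so that $C(R)=o(R^p)$ uniformly in $m$. Using $0\le h_m(u_m)\le u_m^{1+\lambda}$, the positivity $u_m\ge0$ from Lemma \ref{lem4081801}, and the pointwise bound $u_m\le R\psi^{-c}$ (valid on $[0,\rho_m^R]$, with $-c>0$ by \eqref{eq4081803}, so $\psi^{-c}$ is bounded on $D$), I would factor the integrand as a high power of $\psi^{c}u_m$—bounded by $R$—times a low-order moment of $u_m$. The exponent conditions \eqref{p}, \eqref{eq4081803} and Assumption \ref{ass_lambda}$(\lambda,\gamma)$ are exactly what make the weighted exponents match, while the sub-criticality $\lambda<1/2,\ p>2/(1-2\lambda)$ is what forces the residual power of $R$ to be strictly negative. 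The delicate point is that the \emph{naive} route—linearizing $h_m$ at level $R$ (so that its effective coefficient is $\sim R^{\lambda}$) and invoking Gronwall's lemma—produces a factor $e^{NR^{p\lambda}T}$, which is not $o(R^p)$ and must be avoided. The way around it is to exploit that the noise is mean-zero: testing the equation against the fractional-Laplacian eigenfunction weight ($\sim\psi^{\alpha/2}$) and taking expectations annihilates the stochastic integral exactly, so that the dissipativity and weighted integration-by-parts of \cite{DR24} (as in Lemma \ref{lem1092139}) deliver a genuinely uniform-in-$m$ bound on a low-order moment of $u_m$. Pairing this $m$- and $R$-independent low-order bound with the extracted high power of $\psi^{c}u_m\le R$ gives $C(R)=N\|u_0\|_{U_{p,\theta}^{\gamma}(D)}^p+N_\ast R^{p(1+\lambda)-q_0}$ for some $q_0>p\lambda$, so that $C(R)/R^p\to0$ and \eqref{eq4090015} follows. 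I anticipate that the simultaneous matching of the weighted exponents and keeping the residual $R$-power negative—i.e. checking that \eqref{eq4081803} and Assumption \ref{ass_lambda} leave exactly enough room—will be the most technical and error-prone part.
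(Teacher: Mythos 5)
You have the right ingredients (positivity of $u_m$, testing with a power of the regularized distance to annihilate the stochastic integral in expectation, the choice $s=1/(2\lambda)$, and the embedding via Proposition \ref{prop holder} and Lemma \ref{lem_prop} $(iv)$ with $p>(d+\alpha)/\gamma$), but the quantitative core of your scheme fails. Your plan stops at the single level $R$ and needs $C(R)=N\|u_0\|_{U^{\gamma}_{p,\theta}(D)}^p+N_*R^{p(1+\lambda)-q_0}$ with $q_0>p\lambda$, i.e.\ a uniform-in-$m$ bound on a $q_0$-th moment of $u_m$ with $q_0>p\lambda$. But the mean-zero/dissipativity argument you invoke only closes at \emph{first} order: testing \eqref{eq4100000} with $\widetilde{\psi}^{\beta}$ and taking expectations works because the drift is linear in $u_m$, whereas for any moment $q_0\geq 2$ It\^o's formula reintroduces the quadratic-variation term $\sim u_m^{2+2\lambda}$, which is again superlinear and cannot be closed uniformly in $m$ — uniform higher moments are exactly what the lemma is trying to establish. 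With only the weighted $L_1$ bound available ($q_0=1$), pairing it with $u_m\leq R\psi^{-c}$ on $[0,\rho_m^R]$, where $c:=\gamma+\frac{\theta-d}{p}-\frac{\alpha}{2}$, gives at best $C(R)\lesssim R^{p(1+\lambda)-1}$, and $p(1+\lambda)-1<p$ iff $p\lambda<1$ — false in the regime of interest, since the sup embedding you use forces $p>(d+\alpha)/\gamma$, hence $p\lambda\geq 1$ whenever $\gamma$ is small relative to $\lambda$. The alternative you correctly rule out (Gronwall with effective Lipschitz constant $\sim R^{\lambda}$) gives $e^{cR^{q}}$, and there is no intermediate route: a one-parameter bootstrap at level $R$ cannot produce $C(R)=o(R^p)$.

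The paper's proof avoids this by decoupling two levels. It introduces the stopping time $\tau_m(S):=\inf\{t:\|\widetilde{\psi}^{\beta}u_m(t,\cdot)\|_{L_1(D)}>S\}$, with $\beta$ chosen via \eqref{eq4071126} (this is where \eqref{eq4081803} enters), and controls $\bP(\tau_m(S)\leq\tau)\leq N_0/\sqrt{S}$ uniformly in $m$ by a supermartingale maximal inequality: after multiplying by $e^{-\kappa t}$ and using $L_t\widetilde{\psi}^{\beta}\leq -N\widetilde{\psi}^{\beta-\alpha}$ near $\partial D$ (from \cite{DR24}) and $\kappa$ large, $(e^{-\kappa t}u_m,\widetilde{\psi}^{\beta})_D$ is a supermartingale, whence $\bE\sup_{t\leq\tau}\|\widetilde{\psi}^{\beta}u_m\|_{L_1(D)}^{1/2}\leq N_0$ by \cite[Theorem III.6.8]{K95}. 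The decisive observation — present only vestigially in your write-up — is that the \emph{linearized} coefficient $\xi_m:=\xi|u_m\wedge m|^{1+\lambda}/u_m$ satisfies, with $s=1/(2\lambda)$ and $\theta_0=d+\beta$, $|\xi_m|^{2s}\lesssim |u_m\wedge m|^{2\lambda s}=|u_m\wedge m|\leq u_m$, so $\sup_{t\leq\tau_m(S)}\|\xi_m(t,\cdot)\|_{L_{2s,\theta_0}(D)}^{2s}\leq NS$: the weighted $L_1$ bound is converted, exactly, into the coefficient bound required by Assumption \ref{ass_h} $(s,\gamma)$. Theorem \ref{mainthm} (applicable since $p>2/(1-2\lambda)=2s/(s-1)$) then yields $\bE\sup_{t\leq\tau\wedge\tau_m(S),\,x\in D}|\psi^{c}u_m|^p\leq N(S)$ uniformly in $m$, and Chebyshev gives $\bP(\sup|\psi^{c}u_m|>R)\leq N(S)/R+N_0/\sqrt{S}$; letting $R\to\infty$ first and then $S\to\infty$ proves \eqref{eq4090015}. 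Note $N(S)$ may grow arbitrarily in $S$ — precisely the freedom your single-parameter argument forfeits by tying the moment bound to the same level $R$ that appears in Chebyshev.
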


\begin{proof}
Recall, as discussed in Remark \ref{compatible},   that condition \eqref{eq4081803}  is compatible with the condition $\theta\in (d-1, d-1+p)$, as well as with the condition $\theta\in(d-\alpha/2,d-\alpha/2+\alpha p/2)$, which is assumed in Lemma \ref{lem4081801}.

     By It\^o's formula, $v:=e^{-\kappa t} u_m \in \frH_{p,\theta,\alpha}^\gamma(D,\tau)$ satisfies
    \begin{equation} \label{eq4062244}
        dv = (L_tv - \kappa v)dt + \sum_{k=1}^\infty e^{-\kappa t}\xi|u_m\wedge m|^{1+\lambda}(\Pi*e_k)dw_t^k
    \end{equation}
    in $(0,\tau)\times D$. Here, we emphasize that in the stochastic part, $|u_m\wedge m|^{1+\lambda}= |0\vee u_m\wedge m|^{1+\lambda}$ due to $u_m\geq0$. 
Let us consider a regularized distance function $\widetilde{\psi}$ satisfying \eqref{eq4052240}.
Since $\theta<d-1+p$, we can take $\beta\in(0,\alpha/2)$ so that
\begin{equation} \label{eq4071126}
    \frac{\alpha}{2}+\frac{\theta}{p}-\frac{d-1}{p}-1<\beta < \frac{\gamma}{\lambda}\vee\left(\frac{\alpha-d}{2\lambda}-d\right).
\end{equation}
Then for $p'$ and $\theta'$ such that $1/p+1/p'=1$ and $\theta/p+\theta'/p'=d$, we have $\beta p'+\theta'-\alpha p'/2>d-1$ and
$\widetilde{\psi}^\beta \in L_{p',\theta'-\alpha p'/2}(D)$. Moreover, it follows from the proof of \cite[Lemma 3.10]{DR24} that $|L_t\widetilde{\psi}^\beta| \leq N\widetilde{\psi}^{\beta-\alpha}$, which leads to $L_t\widetilde{\psi}^\beta \in L_{p',\theta'+\alpha p'/2}(D)$ for fixed $(\omega,t)$. By the solvability result of the elliptic equation (see Remark \ref{rem4061023}), we deduce that $\widetilde{\psi}^\beta\in H_{p',\theta'-\alpha p'/2}^{\alpha/2}(D)$. This together with Lemma \ref{lem_prop} $(iii)$ implies that we can test \eqref{eq4062244} by $\widetilde{\psi}^\beta$. Hence, for any stopping time $\tau_0\leq \tau$,
\begin{align} \label{eq4061120}
    \bE(e^{-\kappa \tau_0} u_m(\tau_0,\cdot),\widetilde{\psi}^\beta)_D &= \bE(u_0,\widetilde{\psi}^\beta)_D + \bE\int_0^{\tau_0} e^{-\kappa s} (u_m, L_s\widetilde{\psi}^\beta - \kappa \widetilde{\psi}^\beta )_D ds \nonumber
    \\
    &\quad+ \bE \int_0^{\tau_0} e^{-\kappa s}(\xi|u_m\wedge m|^{1+\lambda}(\Pi*e_k),\widetilde{\psi}^\beta)_D dw_s^k.
\end{align}
Since
\begin{align*}
     &\bE \int_0^{\tau} \sum_{k=1}^\infty e^{-\kappa s}|(\xi|u_m\wedge m|^{1+\lambda}(\Pi*e_k),\widetilde{\psi}^\beta)_D|^2 ds 
     \\
     &\leq N \bE \int_0^{\tau} \||u_m\wedge m|^{1+\lambda}\widetilde{\psi}^\beta\|_{\cH}^2 ds \leq N(m) \bE \int_0^{\tau} \|1_{D}\|_{\cH}^2 ds <\infty
\end{align*}
by \eqref{eq3141037}, it follows that the last stochastic integral in \eqref{eq4061120} is a square integrable martingale and hence vanishes under expectation.

Next, we consider the second term in \eqref{eq4061120}. It follows from \cite[Lemma 3.11]{DR24} that there exist $\delta>0$ and $N>0$ such that
\begin{equation*}
    L_t\widetilde{\psi}^{\beta}(x) \leq -N\widetilde{\psi}^{\beta-\alpha}(x)
\end{equation*}
for any $d_x<\delta$.
On the other hand, since $\widetilde{\psi} \in C_b^2(D_{\delta})$ where $D_{\delta}:=\{d_x\geq\delta\}$, we have $|L_t\widetilde{\psi}^\beta|\leq N \leq N\widetilde{\psi}^\beta$ on $D_{\delta}$. Thus, by using $u_m\geq0$,
\begin{align*}
    &(u_m, L_s\widetilde{\psi}^\beta)_D \leq -N\int_{D\setminus D_\delta} u_m \widetilde{\psi}^{\beta-\alpha} dx + \int_{D_\delta} u_m L_s\widetilde{\psi}^{\beta} dx
    \\
    &\leq N\int_{D_\delta} u_m \widetilde{\psi}^{\beta} dx \leq N\int_{D} u_m \widetilde{\psi}^{\beta} dx = N(u_m,\widetilde{\psi}^{\beta})_D.
\end{align*}
Hence, we see that for sufficiently large $\kappa>0$,
    \begin{align*}
        \bE(e^{-\kappa T} u_m(\tau_0,\cdot),\widetilde{\psi}^\beta)_D &\leq \bE(e^{-\kappa \tau_0} u_m(\tau_0,\cdot),\widetilde{\psi}^\beta)_D
        \\
        &\leq \bE(u_0,\widetilde{\psi}^\beta)_D +  (N-\kappa)\bE\int_0^{\tau_0} e^{-\kappa s} (u_m,\widetilde{\psi}^{\beta})_D ds
        \\
        &\leq \bE(u_0,\widetilde{\psi}^\beta)_D.
    \end{align*}
    Since $\tau_0\leq\tau$ is arbitrary, this and \cite[Theorem III.6.8]{K95} lead to
    \begin{equation*}
        \bE\sup_{t\leq\tau} \|\widetilde{\psi}^\beta u_m(t,\cdot)\|_{L_1(D)}^{1/2} = \bE\sup_{t\leq\tau} |(u_m(t,\cdot),\widetilde{\psi}^\beta)_D|^{1/2} \leq Ne^{\kappa T} \bE|(u_0,\widetilde{\psi}^\beta)_D|^{1/2} =: N_0,
    \end{equation*}
    where $N_0$ is independent of $m$. This actually implies that for any $S>0$,
    \begin{equation} \label{eq4081730}
        \bP \left(\sup_{t\leq\tau} \|\widetilde{\psi}^\beta u_m(t,\cdot)\|_{L_1(D)} >S \right) \leq N_0/\sqrt{S}.
    \end{equation}

    Note that $\|\widetilde{\psi}^\beta u_m(t,\cdot)\|_{L_1(D)} = (u_m(t,\cdot),\widetilde{\psi}^\beta)_D$ is predictable. Thus,
    \begin{equation*}
        \tau_m(S):= \inf\{t\geq0: \|\widetilde{\psi}^\beta u_m(t,\cdot)\|_{L_1(D)} >S \}
    \end{equation*}
    is a stopping time. Let us define
    \begin{equation*}
        \xi_m:=\frac{\xi|u_m\wedge m|^{1+\lambda}}{u_m}
    \end{equation*}
    where $0/0:=0$. Let $\theta_0:=d+\beta$. Then for $t\leq \tau_m(S)$ and $s=1/2\lambda$,
    \begin{align*}
    \|\xi_m(t,\cdot)\|_{L_{2s,\theta_0}(D)}^{2s} &\leq N\int_{D} |\xi|^{2s} |u_m\wedge m|^{2\lambda s} \widetilde{\psi}^{\theta_0-d} dx \\
    &\leq N \|\xi\|_{L_{\infty}}^{2s} \|\widetilde{\psi}^\beta u_m(t,\cdot)\|_{L_1(D)} \leq NS.
    \end{align*}
    Note that due to \eqref{eq4071126}, $\theta_0$ satisfies the condition in Assumption \ref{ass_h} $(s,\gamma)$ with $s=1/2\lambda$. Thus, since $p> 2/(1-2\lambda)$, we can apply Theorem \ref{mainthm} to get
    \begin{equation*}
        \|u_m\|_{\frH_{p,\theta,\alpha}^\gamma(D,\tau\wedge \tau_m(S))} \leq N(S) \|u_0\|_{U_{p,\theta}^\gamma(D)},
    \end{equation*}
    where $N(S)$ is independent of $m$. Combining Lemma \ref{lem_prop} $(iv)$ and \eqref{eq4071511} with $\nu\alpha \leq \gamma-d/p$, we see that
    \begin{equation} \label{eq4082319}
       \bE\sup_{t\leq \tau\wedge \tau_m(S), x\in D} |\psi^{\gamma+\frac{\theta-d}{p}-\frac{\alpha}{2}}u_m|^p \leq N\|u_m\|_{\frH_{p,\theta,\alpha}^\gamma(D,\tau)} \leq N(S).
    \end{equation}
    Here, we note that we used the condition $p>(d+\alpha)/\gamma$.
    Using this, \eqref{eq4081730}, and Chebyshev’s inequality,
    \begin{align*}
&\bP\left(\sup_{t\leq\tau,x\in D}|\psi^{\gamma+\frac{\theta-d}{p}-\frac{\alpha}{2}}u_m|>R\right)
        \\
        &\leq \bP\left(\sup_{t\leq\tau \wedge \tau_m(S),x\in D}|\psi^{\gamma+\frac{\theta-d}{p}-\frac{\alpha}{2}}u_m|>R\right) + \bP(\tau_m(S)\leq\tau)
        \\
        &\leq \frac{N(S)}{R} + \bP\left(\sup_{t\leq\tau}\|\widetilde{\psi}^\beta u_m(t,\cdot)\|_{L_1(D)} >S\right) \leq \frac{N(S)}{R} + \frac{N_0}{\sqrt{S}}.
    \end{align*}
    Letting $R\to\infty$ and $S\to\infty$ in order, we obtain the desired result. The lemma is proved.
\end{proof}

Now we prove Theorem \ref{thm_super}.

\begin{proof}[Proof of Theorem \ref{thm_super}]

    We first prove  the uniqueness result. Let us assume that $u,v\in \frH_{p,\theta,\alpha,loc}^{\gamma}(D,\infty)$ are two solutions to equation \eqref{eq4082244}. Take a sequence of bounded stopping times $\tau_n\to\infty$ such that $u,v\in \frH_{p,\theta,\alpha}^\gamma(D,\tau_n)$. 
 As in \eqref{eq4082319}, by Lemma \ref{lem_prop} $(iv)$ and \eqref{eq4071511} with $\nu\alpha \leq\gamma-d/p$,
    \begin{equation*}
        \psi^{\gamma+\frac{\theta-d}{p}-\frac{\alpha}{2}}u, \, \psi^{\gamma+\frac{\theta-d}{p}-\frac{\alpha}{2}}v \in C_b((0,\tau_n)\times D).
    \end{equation*}
    Due to \eqref{eq4081106}, $\gamma+\frac{\theta-d}{p}-\frac{\alpha}{2}<0$, which implies that $u,v\in C_b((0,\tau_n)\times D)$. Hence, we can consider stopping times
    \begin{equation*}
        \tau^m_n(u):=\inf\{t\leq \tau_n:\sup_{x\in D} u>m\}
    \end{equation*}
    and
        \begin{equation*}
        \tau^m_n(v):=\inf\{t\leq \tau_n:\sup_{x\in D} v>m\}.
    \end{equation*}
    Then one can easily see that $\tau^m_n(u),\tau^m_n(v)\to\tau_n$ as $m\to\infty$.
    For $t\leq \tau^m_n(u)\wedge \tau^m_n(v)$, both $u$ and $v$ satisfy \eqref{eq4100000}. Thus, from the uniqueness result in Lemma \ref{lem4081801}, we have $u=v$ if $t \leq \tau_n$. This proves the uniqueness.

    Next, we  handle the existence result. 
    By Lemma \ref{lem4081801}, we can take a solution $u_m$ to \eqref{eq4100000} in the space $\frH_{p,\theta,\alpha}^{\gamma}(D,T)$ for any nonrandom $T>0$.  For any $R>0$, define the  stopping time
    \begin{equation*}
        \tau_m^R:=\inf\{t\leq T: \sup_{x\in D} |\psi^{\gamma+\frac{\theta-d}{p}-\frac{\alpha}{2}}u_m|>\frac{R}{c_0}\},
    \end{equation*}
    where $c_0:=\sup_{x\in D}\psi^{-\gamma-\frac{\theta-d}{p}+\frac{\alpha}{2}}$. Then for any $t\leq \tau_m^R$,
    \begin{equation*}
    \sup_{x\in D} |u_m| \leq c_0\sup_{x\in D}|\psi^{\gamma+\frac{\theta-d}{p}-\frac{\alpha}{2}}u_m| \leq R.
    \end{equation*}
    Thus, if $m\geq R$, then both $u_m$ and $u_R$ solve
    \begin{equation*}
        du=L_tu dt + |u \wedge R|^{1+\lambda} \dot{W}, \quad t\leq \tau_m^R\wedge T
    \end{equation*}
    and
        \begin{equation*}
        du=L_tu dt + |u \wedge m|^{1+\lambda} \dot{W}, \quad t\leq \tau_R^R\wedge T.
    \end{equation*}
    By the uniqueness result in Lemma \ref{lem4081801}, $u_m=u_R \in \frH_{p,\theta,\alpha}^\gamma((\tau_m^R\vee \tau_R^R)\wedge T,D)$ for any $T>0$. 

For $t\leq \tau_R^R \leq (\tau_m^R\vee \tau_R^R)$,
\begin{equation*}
    \sup_{x\in D} |\psi^{\gamma+\frac{\theta-d}{p}-\frac{\alpha}{2}}u_R| = \sup_{x\in D} |\psi^{\gamma+\frac{\theta-d}{p}-\frac{\alpha}{2}}u_m|\leq \frac{R}{c_0}.
\end{equation*}
This leads to $\tau_R^R\leq \tau_m^R$. A similar argument yields that $\tau_R^R\leq \tau_m^m$.

Now we define
\begin{equation*}
    u(t,x):=u_m(t,x), \quad t\leq \tau_m^m.
\end{equation*}
Then $u$ satisfies \eqref{eq4082244}. To show that $u\in \frH_{p,\theta,,\alpha,loc}^\gamma(D,\infty)$, it remains to prove that $\tau_m:=\tau_m^m\wedge m \to\infty$ as $m\to\infty$. By \eqref{eq4090015}, for any $T>0$,
\begin{align*}
    \limsup_{m\to\infty} \bP(\tau_m^m\leq T) &= \limsup_{m\to\infty} \bP\left(\sup_{t\leq T,x\in D} |\psi^{\gamma+\frac{\theta-d}{p}-\frac{\alpha}{2}}u_m|>\frac{m}{c_0}\right)
    \\
    &\leq \lim_{m\to\infty} \sup_n \bP\left(\sup_{t\leq T,x\in D} |\psi^{\gamma+\frac{\theta-d}{p}-\frac{\alpha}{2}}u_n|>\frac{m}{c_0}\right)=0,
\end{align*}
which easily implies that $\tau_m^m\to \infty$ as $m\to \infty$. The theorem is proved.
\end{proof}

\appendix

\section{Proof of Maximum principle} \label{appA}

\begin{proof}
We follow arguments used in the proof of \cite[Theorem 1.1]{K07}. 

Let $r(z):=|z|^2 1_{z<0}$. Then obviously, $r'(z)=2z 1_{z<0}$ and $r''(z)=2\times 1_{z<0}$.
We take a sequence of functions $r_n\in C_b^2(\bR)$ such that
\begin{equation*}
    |r_n(z)|\leq N|z|^2, \quad |r_n'(z)|\leq N|z|, \quad |r''_n(z)|\leq N,
\end{equation*}
and $(r_n,r_n',r_n'')\to (r,r',r'')$ (a.e.) as $n\to \infty$ (see e.g. \cite[Remark 3.1]{K07}).

\textbf{1.} Assume  (cf. Proposition \ref{lem2072238} and Remark \ref{rem2081915}) that
    \begin{equation} \label{eq4261114}
        u\in \frH_{2,\theta,\alpha}^n(D,\tau) \bigcap \bigcup_{k=1}^\infty L_2(\Omega,C([0,\tau],C_c^n(G_k))),
    \end{equation}
    and $\bD u$ and $\bS u$ are continuous in $x$-variable.
    In this step, we aim to prove the following inequality
\begin{align} \label{eq3310116}
    \bE \int_{D} |u^{(-)}(t\wedge \tau,x))|^2 dx &\leq \bE \int_D |u^{(-)}(0,x)|^2 dx + \bE\int_0^{t\wedge \tau} \int_D u^{(-)}(s,x) f(s,x) dxds \nonumber
    \\
    &\quad + \bE\int_0^{t\wedge \tau} \int_D 1_{u(s,x)<0} |g|_{l_2}^2(s,x) dxds,
\end{align}
where  $z^{(-)}:=z\wedge 0$ for $z\in \bR$.

For given $x\in D$, we can test \eqref{linear} for $\phi^\varepsilon(\cdot-x):= \phi(\frac{\cdot-x}{\varepsilon})$ where $\phi\in C_c^\infty(D)$, and let $\varepsilon\to 0$. Then we obtain that for each $x\in D$,
    \begin{equation*}
u(t,x)= \int_0^t \left(L_s u(s,x)+f(s,x)\right)ds + \sum_{k=1}^\infty \int_0^t g^k(s,x)dw^k_s, \quad t\leq \tau \, \text{(a.s.).}
\end{equation*}
where $f=\bD u-L_tu$ and $g=\bS u$.
Thus, by It\^o's formula, for $\lambda>0$ and $t\leq \tau$ (a.s.),
\begin{align} \label{eq4261115}
    r_n(u(t,x)) &= r_n(u(0,x)) + \int_0^t \Big( r_n'(u(s,x)) (L_su(s,x) + f(s,x))\Big) ds \nonumber
    \\
    &\quad+ \int_0^t \frac{1}{2} r_n''(u(s,x)) |g|_{l_2}^2(s,x) ds + \sum_{k=1}^\infty \int_0^t r_n'(u(s,x)) g^k(s,x)dw_s^k.
\end{align}
By \eqref{eq4261114},
\begin{align*}
    \int_0^t  \sum_{k=1}^\infty |r_n'(u)|^{2} |g^k|^2 ds \leq N(T) \sup_{s\leq t} |u|^{2} \times \int_0^t |g|_{l_2}^2 ds <\infty \text{ (a.s.)}.
\end{align*}
As in \eqref{eq2082117}, if we take the expectation, the stochastic integral in \eqref{eq4261115} vanishes.
Then by integrating with respect to $x$ and letting $n\to \infty$, we get
\begin{align*}
    \bE \int_{D}r(u(t\wedge \tau,x)) dx &= \bE \int_D r(u(0,x)) dx + \bE \int_0^{t\wedge \tau} \int_D r'(u(s,x))L_su(s,x) dxds
    \\
    &\quad+ \bE\int_0^{t\wedge \tau} \int_D r'(u(s,x))f(s,x) dxds
    \\
    &\quad + \bE\int_0^{t\wedge \tau} \int_D \frac{1}{2} r''(u(s,x))|g|_{l_2}^2(s,x) dxds =: I_1 + \cdots + I_4.
\end{align*}

Note that $I_1+I_3+I_4$ is the right-hand side of \eqref{eq3310116}. Thus,  the inequality will follow once we show  that $I_2\leq0$. 
Note that for any $u,v\in H_2^1(\bR^d)$, by Fubini's theorem and a change of variables,
\begin{align*}
    &\int_{\bR^d} u(x) L_tv(x) dx
    \\
    &= \frac{1}{2} \int_{\bR^d} \int_{\bR^d} \left( u(x)(v(x+y)-v(x)) + u(x) (v(x-y)-v(x)) \right) \nu_t(dy)dx
    \\
    &= \frac{1}{2} \int_{\bR^d} \int_{\bR^d} \left( u(x-y)(v(x)-v(x-y)) + u(x+y)(v(x)-v(x+y)) \right) \nu_t(dy)dx,
\end{align*}
where $\nu_t(dy)=\nu_t(\omega,dy)$.
From this, the following identity can be readily obtained:
\begin{align} \label{eq3302351}
        \int_{\bR^d} u(x) L_tv(x) dx &= -\frac{1}{4} \int_{\bR^d} \int_{\bR^d} \left( (u(x+y)-u(x)(v(x+y)-v(x))\right)  \nu_t(dy)dx \nonumber
    \\
    &\quad -\frac{1}{4} \int_{\bR^d} \int_{\bR^d} \left( (u(x-y)-u(x)) (v(x-y)-v(x)) \right) \nu_t(dy)dx.
\end{align}
 Note that for $z,w\in \bR$, we always have
\begin{equation} \label{eq3302352}
    (z^{(-)}-w^{(-)})(z-w)\geq0.
\end{equation}
Since $r'(z)=2z^{(-)}$, \eqref{eq3302351} and \eqref{eq3302352} yield that
\begin{align*}
    \int_D r'(u(s,x))L_su(s,x) dx = 2\int_{\bR^d} u^{(-)}(s,x) L_su(s,x) dx \leq 0,
\end{align*}
which leads to $I_2\leq0$. Here, we remark that we used the fact that $u^{(-)}(s,\cdot)\in H_2^1(\bR^d)$ (see e.g. \cite[Exercise 1.3.18]{K08}).
Hence, we obtain \eqref{eq3310116}.

\textbf{2.}
In this step, we show that \eqref{eq3310116} holds for general $u\in \frH_{2,d,\alpha}^{\alpha/2}(D,\tau)$.
By Proposition \ref{lem2072238} and Remark  Remark \ref{rem2081915} $(i)$, there is a sequence of functions
    \begin{equation*}
        u_n \in \frH_{2,\theta,\alpha}^n(D,\tau) \bigcap \bigcup_{k=1}^\infty L_2(\Omega,C([0,\tau],C_c^n(G_k)))
    \end{equation*}
    such that $u_n\to u$ in $\frH_{2,d,\alpha}^{\alpha/2}(D,\tau)$, and $\bD u_n$ and $\bS u_n$ are continuous in $x$. From the Step \textbf{1}, we have \eqref{eq3310116} with $u_n$ instead of $u$. Let $f_n:=\bD u_n -L_t u_n$. Then, for $s\in(0,t\wedge \tau)$, by Lemma \ref{lem_prop} $(iii)$,
    \begin{align*}
        \left| \int_{D} u_n^{(-)}(s,x) f_n(s,x)dx \right| &= |(u_n^{(-)}(s,\cdot), f_n(s,\cdot)_D |
        \\
        &\leq N \|\psi^{-\alpha/2} u_n^{(-)}(s,\cdot)\|_{H_{2,d}^{\alpha/2}(D)} \|\psi^{\alpha/2} f_n(s,\cdot)\|_{H_{2,d}^{-\alpha/2}(D)}.
    \end{align*}
    Since the operator $v\to v^{(-)}$ is bounded in $H_2^1(\bR^d)$, by \eqref{def. Hptheta}, it is also bounded in $\psi^{\alpha/2} H_{2,d}^{\alpha/2}(D)$. Hence,
    \begin{equation*}
        \lim_{n\to \infty} \int_{D} u_n^{(-)}(s,x) f_n(s,x)dx = \int_{D} u^{(-)}(s,x) f(s,x)dx.
    \end{equation*}
    The other terms in \eqref{eq3310116} for $u_n$ can be handled similarly. In particular, for the convergence of the left-hand side, one needs to use \eqref{embed_sup}.
    Thus, we obtain \eqref{eq3310116} for $u$. The second step is proved.

    \textbf{3.}
Thanks to \eqref{eq3302353}, one can easily find that the right-hand side of \eqref{eq3310116} is less than $0$, which yields that
\begin{eqnarray*}
    \bE \int_{D} |u^{(-)}(t\wedge \tau,x)|^2 dx \leq 0.
\end{eqnarray*}
This easily implies that for each $t$ $u\geq0$ (a.s.). Since $u$ is continuous in $t$ by Proposition \ref{prop holder}, the desired result is obtained. The theorem is proved.
\end{proof}

\section{Some properties of stochastic Banach spaces on $\bR^d$} \label{appB}

We present several properties of the stochastic Banach space $\cH^{\gamma}_{p,\alpha}(\tau)$,  defined over the entire space. This space generalizes the one introduced in \cite{KAA} for the case $\alpha=2$. 

\begin{definition}
 Let $p\geq 2$ and $\gamma\in \bR$.   For any $\cD'(\bR^d)$-valued function $u$ defined on $\Omega\times[0,\tau)$, we write 
$u\in \cH^{\gamma}_{p,\alpha}(\tau)$ if $u\in \bH^{\gamma}_p(\tau)$, $u(0,\cdot)\in B_{p,p}^{\gamma+\alpha-\alpha/p}$ and there exist $f\in \bH^{\gamma-\alpha}_p(\tau)$ and $g\in \bH^{\gamma-\alpha/2}_p(\tau,l_2)$ such that for any 
  $\phi\in C^{\infty}_c(\bR^d)$,
$$
(u(t,\cdot), \phi)_{\bR^d}=(u(0,\cdot),\phi)_{\bR^d} +\int^t_0 (f(s,\cdot), \phi)_{\bR^d}ds + \sum_{k=1}^\infty \int_0^t (g^k(s,\cdot),\phi)_{\bR^d} dw_t^k
$$
holds for all $t\leq \tau$ (a.s.).
In this case, we write $\bD u:=f$ and $\bS u:=g$. The norm in  $\cH_{p,\alpha}^{\gamma}(\tau)$ is defined as 
\begin{align*}
\|u\|_{\cH_{p,\alpha}^{\gamma}(T)} := \| u\|_{\bH_{p}^{\gamma}(T)} + \|u_t\|_{\bH_{p}^{\gamma-\alpha}(T)}+ \| u(0,\cdot) \|_{B_{p,p}^{\gamma-\alpha /p}}. \nonumber
\end{align*}
\end{definition}

\begin{lem} \label{L2 whole}
Let $\gamma\in \bR$, $\tau\leq T$ be a bounded stopping time. Then for any function $u\in \cH_{2,\alpha}^{\gamma}(\tau)$, we have $u\in C([0,\tau],H_2^{\gamma-\alpha/2})$ (a.s.) and
\begin{align} \label{L2 sup}
&\bE \sup_{t\leq \tau} \| u(t,\cdot)\|_{H_2^{\gamma-\alpha/2}}^2 \nonumber
\\
&\leq N(d) \left(\|u_0\|^2_{B_{2,2}^{\gamma-\alpha/2}} + a^2\|u\|^2_{\bH_2^{\gamma}(\tau)}+a^{-2}\| \bD u\|_{\bH_2^{\gamma-\alpha}(\tau)}+ \|\bS u\|^2_{\bH_2^{\gamma-\alpha/2}(\tau,l_2)}\right). 
\end{align} 
In particular, $N$ is independent of $T$ and $a$.
\end{lem}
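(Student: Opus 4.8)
The plan is to reduce to the base regularity $\gamma=\alpha/2$ and then run the energy (It\^o) argument in the Gelfand triple $H_2^{\alpha/2}\hookrightarrow L_2\hookrightarrow H_2^{-\alpha/2}$. First I would apply the Fourier multiplier $(1-\Delta)^{(\gamma-\alpha/2)/2}$ and set $v:=(1-\Delta)^{(\gamma-\alpha/2)/2}u$. Since this operator commutes with the time integral and the stochastic integral (testing against $\phi\in C_c^\infty(\bR^d)$ transforms covariantly), we get $v\in \cH_{2,\alpha}^{\alpha/2}(\tau)$ with $\bD v=(1-\Delta)^{(\gamma-\alpha/2)/2}\bD u$ and $\bS v=(1-\Delta)^{(\gamma-\alpha/2)/2}\bS u$. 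Because $(1-\Delta)^{s/2}$ is an isometry $H_2^{t}\to H_2^{t-s}$ and, for $p=2$, one has $B_{2,2}^{s}=H_2^{s}$ with equivalent norms, each term on the right transforms into the corresponding $L_2$-level quantity for $v$: $\|v(0)\|_{L_2}\approx\|u_0\|_{B_{2,2}^{\gamma-\alpha/2}}$, $\|v\|_{\bH_2^{\alpha/2}(\tau)}=\|u\|_{\bH_2^{\gamma}(\tau)}$, $\|\bD v\|_{\bH_2^{-\alpha/2}(\tau)}=\|\bD u\|_{\bH_2^{\gamma-\alpha}(\tau)}$, $\|\bS v\|_{\bL_2(\tau,l_2)}=\|\bS u\|_{\bH_2^{\gamma-\alpha/2}(\tau,l_2)}$, while $\sup_{t}\|v(t)\|_{L_2}=\sup_{t}\|u(t)\|_{H_2^{\gamma-\alpha/2}}$. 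Hence it suffices to prove the estimate when $\gamma=\alpha/2$ and every Sobolev index is lowered to the $L_2$ level.

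For $v\in \cH_{2,\alpha}^{\alpha/2}(\tau)$ I would next establish the It\^o energy identity
\[
\|v(t)\|_{L_2}^2=\|v(0)\|_{L_2}^2+\int_0^t\Big(2\langle \bD v,v\rangle+\|\bS v(s,\cdot)\|_{L_2(l_2)}^2\Big)\,ds+2\sum_{k}\int_0^t\big(v,(\bS v)^{k}\big)_{L_2}\,dw_s^k,
\]
valid for $t\le\tau$ (a.s.), where $\langle\cdot,\cdot\rangle$ denotes the $H_2^{-\alpha/2}$--$H_2^{\alpha/2}$ duality pairing. This is the standard It\^o formula for the square of the $L_2$-norm in a normal triple; I would justify it via the density of smooth, compactly supported elements of $\cH_{2,\alpha}^{\alpha/2}(\tau)$ (as in \cite{KAA}), applying the classical pointwise It\^o formula to $|v(s,x)|^2$, integrating in $x$, and passing to the limit, controlling the drift pairing by continuity of the $H^{-\alpha/2}$--$H^{\alpha/2}$ duality.

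Then I would estimate the two remaining pieces. The drift is handled by duality and Young's inequality, $2|\langle \bD v,v\rangle|\le a^{-2}\|\bD v\|_{H_2^{-\alpha/2}}^2+a^2\|v\|_{H_2^{\alpha/2}}^2$, which after integrating in $s$ and taking expectations yields exactly the $a^{-2}\|\bD v\|_{\bH_2^{-\alpha/2}(\tau)}^2$ and $a^2\|v\|_{\bH_2^{\alpha/2}(\tau)}^2$ terms, while $\int_0^\tau\|\bS v\|_{L_2(l_2)}^2ds$ gives $\|\bS v\|_{\bL_2(\tau,l_2)}^2$. For the martingale $M_t:=2\sum_k\int_0^t(v,(\bS v)^{k})_{L_2}\,dw_s^k$, Cauchy--Schwarz bounds the quadratic variation by $[M]_t\le 4\sup_{s\le t}\|v(s)\|_{L_2}^2\int_0^t\|\bS v\|_{L_2(l_2)}^2\,ds$, and the Burkholder--Davis--Gundy inequality followed by Young's inequality gives $\bE\sup_{t\le\tau}|M_t|\le \tfrac12\bE\sup_{t\le\tau}\|v(t)\|_{L_2}^2+N(d)\|\bS v\|_{\bL_2(\tau,l_2)}^2$. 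Taking $\sup_{t\le\tau}$ in the energy identity, then expectations, and absorbing the half-supremum into the left-hand side yields the claim; since the BDG and Young constants are universal, $N$ depends only on $d$ and is independent of $T$ and $a$.

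The main obstacle is the rigorous justification of the energy equality at this borderline regularity, where $\bD v$ only lies in $H_2^{-\alpha/2}$ so that $\langle \bD v,v\rangle$ is a genuine duality pairing rather than an honest integral. This requires the Gelfand-triple It\^o formula together with the density of smooth compactly supported approximants in $\cH_{2,\alpha}^{\alpha/2}(\tau)$ and uniform control of each term along the approximation; everything else reduces to Young's inequality and BDG with universal constants, which is precisely what forces the independence of $N$ from $T$ and $a$.
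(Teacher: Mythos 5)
Your proposal is correct and follows essentially the same route as the paper: reduce to $\gamma=\alpha/2$ via the isometry $(1-\Delta)^{\gamma/2-\alpha/4}$ (using $B_{2,2}^{s}=H_2^{s}$), establish the energy identity for $\|v(t)\|_{L_2}^2$ by mollification/approximation plus It\^o's formula, bound the drift by the $H_2^{-\alpha/2}$--$H_2^{\alpha/2}$ duality and Young's inequality, and control the martingale with Burkholder--Davis--Gundy. The one genuine (and mildly simplifying) deviation is how the parameter $a$ enters: the paper first proves the estimate with $a=1$ and then derives the general case by the parabolic time rescaling $u_c(t,\cdot):=u(ct,\cdot)$, which satisfies $du_c=cf(ct,\cdot)\,dt+\sqrt{c}\,g^k(ct,\cdot)\,dw^k(c)_t$, whereas you insert $a$ directly through the weighted Young inequality $2\|\bD v\|_{H_2^{-\alpha/2}}\|v\|_{H_2^{\alpha/2}}\leq a^{-2}\|\bD v\|_{H_2^{-\alpha/2}}^2+a^{2}\|v\|_{H_2^{\alpha/2}}^2$; both are valid with constants independent of $T$ and $a$, and yours avoids the rescaling step altogether. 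Two small points you should make explicit to match the paper's rigor: before absorbing $\tfrac12\,\bE\sup_{t\leq\tau}\|v(t)\|_{L_2}^2$ into the left-hand side you must know this quantity is finite, which the paper secures at the level of the mollified approximants $u^\varepsilon\in\cH_{2,\alpha}^{\sigma}(\tau)$ by citing \cite[Theorem 2.6]{KK12}; and the a.s.\ continuity assertion $u\in C([0,\tau],H_2^{\gamma-\alpha/2})$ is obtained by showing that $u^\varepsilon$ is Cauchy in $L_2(\Omega;C([0,\tau],L_2))$ (apply the $a=1$ estimate to $u^{\varepsilon_1}-u^{\varepsilon_2}$), a step your ``pass to the limit'' sketch should spell out.
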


\begin{proof}
First, due to the isometry $(1-\Delta)^{\gamma/2-\alpha/4} : \cH_{2,\alpha}^{\gamma}(\tau)\to\cH_{2,\alpha}^{\alpha/2}(\tau)$, it is enough to prove the case $\gamma = \alpha/2$.

Next, we prove \eqref{L2 sup} for $a=1$.
We use Sobolev's mollifiers. Take a nonnegative function $\zeta\in C_c^\infty(\bR^d)$ such that $\int_{\bR^d} \zeta dx=1$. For $\varepsilon>0$, define $\zeta^\varepsilon(x):=\varepsilon^{-d}\zeta(x/\varepsilon)$ and for any distributions $v$ on $\bR^d$, denote $v^\varepsilon=v*\zeta^\varepsilon(x)$. Here, for each $x\in \bR^d$, $u^\varepsilon$ satisfies
\begin{align*}
u^\varepsilon (t,x) = u_0^\varepsilon (x) + \int_0^t f^\varepsilon(s,x) ds + \sum_{k=1}^\infty \int_0^t (g^k)^\varepsilon (s,x) dw^k_s, \quad \forall t\leq \tau \, (a.s.)
\end{align*}
where $f:=\bD u$ and $g:= \bS u$. By It\^o's formula,  for all $t\leq \tau$ (a.s.)
\begin{align*}
|u^\varepsilon (t,x)|^2 &= |u_0^\varepsilon (x)|^2 + 2\int_0^t u^\varepsilon (s,x)f^\varepsilon(s,x) ds 
\\
&\quad+ \int_0^t |g^\varepsilon(s,x)|_{l_2}^2 ds + 2\sum_{k=1}^\infty \int_0^t u^\varepsilon (s,x)(g^k)^\varepsilon (s,x) dw^k_s.
\end{align*}
Thus, we integrate both sides with respect to $x$ and apply (stochastic) Fubini's theorem to get
\begin{align} \label{21.01.10-1}
\|u^\varepsilon (t,\cdot)\|_{L_2}^2 &= \|u_0^\varepsilon\|_{L_2}^2 + 2 \int_0^t \int_{\bR^d} u^\varepsilon (s,x)f^\varepsilon(s,x) dx ds  \nonumber
\\
&\quad+ \|g^\varepsilon\|_{\bL_2(t,l_2)}^2 + 2\sum_{k=1}^\infty \int_0^t \int_{\bR^d} u^\varepsilon (s,x)(g^k)^\varepsilon (s,x) dx dw^k_s.
\end{align}
Since $L_2=H_2^0=B_2^0$, $\|u_0^\varepsilon\|_{L_2} = \|u_0^\varepsilon\|_{B_{2,2}^0}$. 
By Young's inequality,
\begin{align} \label{21.01.10-2}
\left| \int_0^t \int_{\bR^d} u^\varepsilon (s,x)f^\varepsilon(s,x) dx ds \right | &= \left| \int_0^t \left( (1-\Delta)^{-\alpha/4} u^\varepsilon (s,\cdot), (1-\Delta)^{\alpha/4}f^\varepsilon(s,\cdot) \right)_{\bR^d} ds \right| \nonumber 
\\
&\leq \int_0^t\|u^\varepsilon (s,\cdot)\|_{H_2^{\alpha/2}} \|f^\varepsilon (s,\cdot)\|_{H_2^{-\alpha/2}} ds \nonumber
\\
&\leq \frac{1}{2} \int_0^t\|u^\varepsilon (s,\cdot)\|_{H_2^{\alpha/2}}^2 ds + \frac{1}{2} \int_0^t\|f^\varepsilon (s,\cdot)\|_{H_2^{-\alpha/2}}^2 ds.
\end{align}
By the Burkholder-Davis-Gundy inequality, Minkowski's inequality, and H\"older's inequality,
\begin{align*}
&\bE \left[\sup_{t\leq \tau}\left|\sum_{k=1}^\infty \int_0^t u^\varepsilon (s,x)(g^k)^\varepsilon (s,x) dx dw^k_s \right| \right] 
\\
&\leq N\bE \left[ \left| \int_0^\tau \left| \int_{\bR^d} u^\varepsilon (s,x)(g^k)^\varepsilon (s,x) dx  \right|_{l_2}^2 ds  \right|^{1/2} \right]
\\
&\leq N \bE \left[ \left| \int_0^\tau  \|u^\varepsilon(s,\cdot)\|_{L_2}^2 \|g^\varepsilon(s,\cdot)\|_{L_2(l_2)}^2 ds \right|^{1/2} \right].
\end{align*}
Thus, using Young's inequality, for any $\delta>0$, we get
\begin{align} \label{21.01.10-3}
&\bE \left[\sup_{t\leq \tau}\left|\sum_{k=1}^\infty \int_0^t u^\varepsilon (s,x)(g^k)^\varepsilon (s,x) dx dw^k_s \right| \right] \nonumber
\\
&\leq N \bE \left[ \sup_{t\leq \tau} \|u^\varepsilon(t,\cdot)\|_{L_2} \left| \int_0^T \|g^\varepsilon(s,\cdot)\|_{L_2(l_2)}^2 ds \right|^{1/2} \right] \nonumber
\\
&\leq N \delta \bE \sup_{t\leq \tau} \|u^\varepsilon(t,\cdot)\|_{L_2}^2  + N_\delta \|g\|_{\bL_2(\tau,l_2)}^2.
\end{align}
Since $u^\varepsilon \in \cH_{2,\alpha}^\sigma(\tau)$ for any $\sigma\in \bR$, \cite[Theorem 2.6]{KK12} yields
$$
\bE \sup_{t\leq \tau} \|u^\varepsilon(t,\cdot)\|_{L_2}^2<\infty.
$$
Therefore, taking $\delta$ sufficiently small, \eqref{21.01.10-1}, \eqref{21.01.10-2}, and \eqref{21.01.10-3} together imply  \eqref{L2 sup} with $a=1$ and $u^\varepsilon$ in place  of $u$.
Also, for $\varepsilon_1,\varepsilon_2>0$, by considering $u^{\varepsilon_1}-u^{\varepsilon_2}$ in place of $u^\varepsilon$, we conclude $u^\varepsilon$ is a Cauchy sequence in $L_2(\Omega,C([0,\tau],L_2))$. Thus \eqref{L2 sup} with $a=1$ is proved.

We now prove \eqref{L2 sup} with general $a>0$. Note that for any constant $c>0$, $u_c(t,\cdot):= u(ct,\cdot)$ satisfies
\begin{align} \label{22.01.10.2342}
du_c(t,\cdot) = cf(ct,\cdot) dt + \sqrt{c} g^k(ct,\cdot) dw^k(c)_t,
\end{align}
where $f=\bD u$, $g=\bS u$ and $w^k(c)_t=c w^k_{\sqrt{c}t}$. Hence,  applying \eqref{L2 sup} with $a=1$, we have
\begin{align*}
&\bE \sup_{t\leq \tau} \|u(t,\cdot)\|_{L_2} = \bE \sup_{t\leq \tau/c} \|u_c(t,\cdot)\|_{L_2}
\\
&\leq N\left( \|u_0\|_{L_2} + \|u_c\|_{\bH_2^{\alpha/2}(\tau/c)}+\| cf(ct,\cdot)\|_{\bH_2^{-\alpha/2}(\tau/c)}+ \|\sqrt{c}g(ct,\cdot)\|_{\bL_2(\tau/c,l_2)} \right)
\\
&=N\left( \|u_0\|_{L_2} + c^{-1/2}\|u\|_{\bH_2^{\alpha/2}(\tau)}+ c^{1/2}\| f\|_{\bH_2^{-\alpha/2}(\tau)}+ \|g\|_{\bL_2(\tau,l_2)} \right).
\end{align*}
Choosing $a=c^{-1/2}$,  we get \eqref{L2 sup}, and thus the lemma is proved.
\end{proof}

\begin{lem}
Let $p\in(2,\infty)$, $\alpha\in(0,2)$, $\gamma\in \bR$ and $1/p<\mu<\nu\leq1/2$. For $a>0$, $0\leq s\leq t\leq \tau$ and $u\in \cH^{\gamma}_{p,\alpha}(\tau)$, 
\begin{align*}
    &\bE \|u(t\wedge \tau)-u(s \wedge \tau)\|_{H_p^{\gamma-\nu\alpha}}^p \nonumber
    \\
    &\leq N|t-s|^{\nu p-1}a^{(2\nu-1)p}\left(a^p\|u\|_{\bH_p^{\gamma}(\tau)}^p + a^{-p}\| \bD u\|_{\bH_p^{\gamma-\alpha}(\tau)}^p + \|\bS u\|_{\bH_p^{\gamma-\alpha/2}(\tau,l_2)}^p \right),
\end{align*}
where $N=N(\alpha,p,\nu,\theta)$. In particular, we have
\begin{align}
\label{Lp whole C}
    &\bE \|u(t)-u_0\|_{C^{\mu-1/p}([0,\tau];{H_p^{\gamma-\nu\alpha})}}^p \nonumber
    \\
    &\leq NT^{(\nu-\mu)p} a^{(2\mu-1)p} \left(a^p\|u\|_{\bH_p^{\gamma}(\tau)}^p +a^{-p}\| \bD u\|_{\bH_p^{\gamma-\alpha}(\tau)}^p + \|\bS u\|_{\bH_p^{\gamma-\alpha/2}(\tau,l_2)}^p \right).
\end{align}
\end{lem}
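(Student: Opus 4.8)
The plan is to establish the time-increment estimate first and then deduce the Hölder bound \eqref{Lp whole C} by a standard Kolmogorov-type argument. To prove the increment estimate, I would exploit the self-similar scaling of the equation, exactly as in the proof of Lemma \ref{L2 whole}. For $c>0$, the rescaled function $u_c(t,\cdot):=u(ct,\cdot)$ solves \eqref{22.01.10.2342}, so by choosing the scaling parameter appropriately (roughly $c\sim a^{-2}$) one can absorb the factor $a$ into the time variable. This reduces the general $a$ case to the case $a=1$, at which point the weights $a^{\pm p}$ and $a^{(2\nu-1)p}$ appear precisely by tracking how the $\bH_p^{\gamma}$, $\bH_p^{\gamma-\alpha}$, and $\bH_p^{\gamma-\alpha/2}$ norms transform under $t\mapsto ct$ together with the Brownian scaling $w^k(c)_t=cw^k_{\sqrt{c}t}$.

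For the case $a=1$ and a fixed interval $[s,t]$, I would split the increment
$$
u(t\wedge\tau)-u(s\wedge\tau)=\int_{s\wedge\tau}^{t\wedge\tau}\bD u\,dr+\sum_{k=1}^\infty\int_{s\wedge\tau}^{t\wedge\tau}(\bS u)^k\,dw^k_r
$$
into its drift and martingale parts and estimate each in $H_p^{\gamma-\nu\alpha}$. The drift term is handled by Minkowski's and Hölder's inequalities in time, using the embedding $H_p^{\gamma-\alpha}\subset H_p^{\gamma-\nu\alpha}$ (valid since $\nu\le 1/2<1$, so $\gamma-\nu\alpha\le\gamma-\alpha$ fails—here one must instead interpolate, reading off a factor $|t-s|$ with an exponent dictated by how far $\gamma-\nu\alpha$ sits between $\gamma-\alpha$ and $\gamma$). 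The stochastic term is the main workhorse: applying the Burkholder--Davis--Gundy inequality for $p\ge2$ followed by Minkowski's inequality moves the $L_p(\Omega)$ norm inside, and the resulting quadratic-variation integral of $\|(\bS u)(r,\cdot)\|_{H_p^{\gamma-\alpha/2}(l_2)}^2$ is estimated in time by Hölder, producing the exponent $\nu p-1$ on $|t-s|$. This is the step where the precise bookkeeping of regularity indices $\gamma-\nu\alpha$ versus $\gamma-\alpha/2$ and the constraint $\nu\le1/2$ is essential, since $\gamma-\nu\alpha\ge\gamma-\alpha/2$ exactly when $\nu\le1/2$, guaranteeing the needed embedding $H_p^{\gamma-\alpha/2}\subset H_p^{\gamma-\nu\alpha}$ for the martingale part.

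Once the increment estimate is in hand, the passage to \eqref{Lp whole C} follows from the Kolmogorov--Chentsov continuity theorem applied to the $H_p^{\gamma-\nu\alpha}$-valued process $t\mapsto u(t\wedge\tau)-u_0$. The increment bound has the form $\bE\|u(t\wedge\tau)-u(s\wedge\tau)\|_{H_p^{\gamma-\nu\alpha}}^p\le C|t-s|^{\nu p-1}$, and since $\mu<\nu$ and $p>1/( \nu-\mu)$ is implied by $1/p<\mu<\nu$, the exponent $\nu p-1>\mu p-1$ gives Hölder regularity of order $\mu-1/p$; tracking the constants through this argument over $[0,T]$ yields the factor $T^{(\nu-\mu)p}$ and the weight $a^{(2\mu-1)p}$. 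I expect the main obstacle to be the careful verification of the embeddings among the fractional Sobolev scales $H_p^{\gamma-\nu\alpha}$, $H_p^{\gamma-\alpha}$, $H_p^{\gamma-\alpha/2}$ under the constraint $1/p<\mu<\nu\le1/2$, and ensuring that the scaling reduction genuinely produces the stated powers of $a$ without hidden $T$-dependence in the constant $N$; the stochastic-integral estimate via BDG is technically the heaviest but is essentially routine given \eqref{L2 sup} as a template.
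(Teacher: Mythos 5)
Your scaling reduction (rescaling $u_c(t,\cdot)=u(ct,\cdot)$ via \eqref{22.01.10.2342} with $c\sim a^{-2}$) is indeed the paper's mechanism for producing the powers of $a$, and the paper additionally reduces to $\gamma=\nu\alpha$ by the isometry $(1-\Delta)^{(\gamma-\nu\alpha)/2}$ and to $\tau=T$ by replacing $\bD u,\bS u,u$ with $\bD u\,1_{(0,\tau)}$, $\bS u\,1_{(0,\tau)}$, $u(\cdot\wedge\tau)$. However, your core argument for the increment estimate has a genuine gap: the embedding you invoke for the martingale part goes the wrong way. By \eqref{inclu}, $H_p^{\gamma_2}\subset H_p^{\gamma_1}$ only when $\gamma_1<\gamma_2$, and since $\nu\le 1/2$ gives $\gamma-\nu\alpha\ge\gamma-\alpha/2$, the true inclusion is $H_p^{\gamma-\nu\alpha}\subset H_p^{\gamma-\alpha/2}$, not the reverse. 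So for $\nu<1/2$ the target norm $H_p^{\gamma-\nu\alpha}$ is \emph{strictly stronger} than the norm in which Burkholder--Davis--Gundy plus Minkowski controls $\int_{s\wedge\tau}^{t\wedge\tau}\bS u\,dw$, and your estimate of the stochastic term only works at $\nu=1/2$. The missing spatial regularity has to be extracted from the $\bH_p^\gamma(\tau)$-regularity of $u$ itself, and the fix you sketch for the drift (interpolating the increment between $H_p^{\gamma-\alpha}$ and $H_p^{\gamma}$) does not close this either: interpolation of $u(t)-u(s)$ would require pointwise-in-time control of $\|u(t)\|_{H_p^{\gamma}}$, whereas $u\in\bH_p^{\gamma}(\tau)$ only controls its $L_p$-integral in time. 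This is exactly why the paper does not argue directly: after the reductions it repeats the proof of \cite[Theorem 7.2]{KAA}, which goes through the model equation and the smoothing of the (fractional) heat semigroup, the fractional analogues of the required kernel estimates being supplied by \cite[Lemma A.2]{H21}; the semigroup is the source of the spatial regularity $\gamma-\nu\alpha$ beyond $\gamma-\alpha/2$ that BDG alone cannot give.

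Your passage to \eqref{Lp whole C} also contains an error: the assertion that ``$p>1/(\nu-\mu)$ is implied by $1/p<\mu<\nu$'' is false (e.g.\ $p=10$, $\mu=0.4$, $\nu=0.45$). From the two-point bound $\bE\|u(t\wedge\tau)-u(s\wedge\tau)\|_{H_p^{\gamma-\nu\alpha}}^p\le C|t-s|^{\nu p-1}$, Kolmogorov--Chentsov, or Garsia--Rodemich--Rumsey with $\Psi(x)=x^p$ and $p(u)=u^{\mu+1/p}$, yields the $C^{\mu-1/p}$ moment bound only when $\int_0^T\int_0^T|t-s|^{(\nu-\mu)p-2}\,ds\,dt<\infty$, i.e.\ when $(\nu-\mu)p>1$; the range $\mu\in[\nu-1/p,\nu)$, which the lemma does claim, is not reachable by this ``standard Kolmogorov-type argument,'' and handling it requires the finer structure of Krylov's proof rather than the raw increment estimate. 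So both halves of your plan need repair: the increment bound cannot be proved by BDG plus embeddings, and the H\"older bound does not follow from the increment bound alone in the stated parameter range.
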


\begin{proof}
As in the proof of Lemma \ref{L2 whole}, it is enough to consider the case $\gamma=\nu\alpha$. 
In addition, for $t\leq T$, by  replacing $\bD u, \bS u$, and $u$ with the modified processes $\widetilde{\bD u}(t) := \bD u1_{(0,\tau)}(t)$, $\widetilde{\bS u}(t) := \bS u1_{(0,\tau)}(t)$, and $\widetilde{u}(t) := u(t\wedge \tau)$, we may  assume without loss of generality that $\tau = T$.

Using equality \eqref{22.01.10.2342}, we can apply a stochastic analogue of the scaling argument used at the beginning of the proof of \cite[Theorem 7.3]{K21},  which allows us to reduce  to case $a=T=1$. Therefore, it suffices to consider the case $a=T=1$. Finally, for the case $a=T=1$,  it is enough to repeat the proof of \cite[Theorem 7.2]{KAA}, which treats the case $\alpha=2$.  The proof goes through  for any $\alpha\in (0,2)$ due to \cite[Lemma A.2]{H21}. The lemma is proved.
\end{proof}

\end{document}